\documentclass[12pt,reqno]{amsart}

\usepackage{amsfonts, amssymb, amsmath,
amsthm, latexsym, euscript, epic, eepic}
 \usepackage{graphicx}

\usepackage[hyperindex]{hyperref}

 \usepackage{amsrefs}

\textheight=240mm
\topmargin=-15mm
\oddsidemargin=0mm
\evensidemargin=0mm
\textwidth=150mm

\newtheorem{theorem}{Theorem}
\newtheorem*{theorem*}{Theorem}
\newtheorem{prop}{Proposition}
\newtheorem{lemma}{Lemma}
\newtheorem{sublemma}{Sublemma}[lemma]

\theoremstyle{definition}

\newtheorem{remark}{Remark}

\def\Q{\mathcal Q}

\def\T{\mathcal{T}}
\def\DD{\mathfrak D}

\def\o{\omega}

\def\natural{\mathbb N}

\begin{document}
\author{K. D\'\i az-Ordaz}
\address{Mathematics Department, Imperial College, 180 Queen's Gate,
London SW7 2AZ} \email{karla.diaz-ordaz@imperial.ac.uk}
\urladdr{http://www.ma.ic.ac.uk/\textasciitilde kd2}

\author{M. P. Holland}
\address{Mathematics Department, University of Exeter, Exeter EX4 4QF,
UK}
\email{M.P.Holland@exeter.ac.uk}
\urladdr{http://www.secam.ex.ac.uk/people/staff/mph204}

\author{S. Luzzatto}
\address{Mathematics Department, Imperial College, 180 Queen's Gate,
London SW7 2AZ}
\email{Stefano.Luzzatto@imperial.ac.uk}
\urladdr{http://www.ma.ic.ac.uk/\textasciitilde luzzatto}

\subjclass[2000]{37D50, 37A25}

\thanks{In writing this paper K. D\'\i az-Ordaz acknowledges the support of
CONACYT Mexico.
M. Holland acknowledges the support of the EPSRC, grant no. GR/S11862/01.
S. Luzzatto acknowledges support of EPSRC grant no. GR/T0969901.
}

\title[Statistical properties of maps with critical and singular
points]
{Statistical properties of one-dimensional maps
with critical points and singularities}

\date{23 June 2006}

\begin{abstract}
  We prove that a class of one-dimensional  maps with an arbitrary
  number of non-degenerate critical and singular points
  admits an induced Markov tower with exponential return time
asymptotics.
In particular the map has an absolutely continuous invariant probability
measure with exponential decay of correlations for H\"{o}lder observations.
\end{abstract}
\maketitle

\section{Introduction and statement of results}\label{sec_intro}

It has been recognized that interval maps can exhibit a great degree
of dynamical complexity.  Indeed, within this class are the first
rigorous examples of \emph{deterministic dynamical systems} which
exhibit \emph{stochastic}  behaviour. These maps can be
characterized in terms of the existence of a \emph{mixing}
absolutely continuous (with respect to Lebesgue) invariant
probability measure (\emph{acip}).  In this paper we explore some
geometric conditions which give rise to acip's and study their
statistical properties.

\subsection{General background}
Early examples of maps where an
explicit formula for the acip can be found include the Gauss map
\cite{Gauss} and the Ulam-von Neumann transformation
\cite{UlaNeu47}. However, finding an explicit form for the density
of the corresponding acip is not possible in most cases. Instead
attention has
focussed on the existence of an acip by giving sufficient conditions
satisfied by certain classes of transformations.

Early work focussed on smooth
uniformly expanding systems \cites{Ren57,
LasYor73, HofKel82, Ryc83} with the last couple of decades really
seeing several developments in the direction of relaxing either the
uniform expansivity assumption, by allowing critical points, or the
smoothness assumption, by allowing discontinuities with possibly
unbounded derivative.

In this paper we make a further step along these
lines by considering maps which have both critical points and
discontinuities. Before stating our results we give a brief review of
the state of the field.

\subsubsection{Maps with critical points}
In the context of unimodal maps (maps with a single critical point
$c$) the existence of an \emph{acip} can be deduced from assumptions
on the derivative growth  and recurrence of the post-critical orbit
\cites{Rue77, Jak78, Mis81, ColEck83, NowStr88,
NowStr91,BruSheStr03}. If \(|(f^{n})'(f(c))|\to\infty \)
exponentially fast (known as the Collet-Eckmann condition), it was
also shown in \cite{KelNow92}, and in \cite{You92} with an
additional condition on the rate of recurrence of the critical
point, that the map is stochastic in a very strong sense: the
\emph{acip} is mixing and exhibits exponential decay of correlations
for H\"older continuous observables. The first generalization of
these results to maps with \emph{multiple critical points} was
obtained in \cite{BruLuzStr03} where the existence of an \emph{acip}
\( \mu \) was obtained under the summability condition \( \sum
|(f^{n})'(f(c))|^{-1/(2\ell-1)}<\infty \) (for every critical point)
and the assumption that all critical points have the same order \(
\ell \). Moreover, the techniques used in that paper made it
possible to show a direct link between the mixing properties of \(
\mu \), in particular the rate of decay of correlations, and the
rate of growth of \( |(f^{n})'(f(c))| \).

\subsubsection{Maps with discontinuities}

There is also an equally significant, and indeed
much longer, list of papers concerned with \emph{uniformly
expanding} maps with discontinuities (but without critical points)
in which similar results to those proved here are obtained,
see \cite{Luz05} for references.
We remark however that most results allow
discontinuities but not unbounded derivatives. This is true for
example in one of the applications in \cite{You98} in which a
strategy similar to ours is applied. As far as we know the
exponential decay of correlations for expanding Lorenz-like maps,
i.e. maps with unbounded derivatives such as the ones we consider
here but with no critical points, follows from arguments in
\cites{Kel80, HofKel82}, see also \cite{Via}.
More recently these results have been
extended in \cite{Dia06} to obtain estimates for the decay of
correlations for observables which are not H\"older continuous.

\subsubsection{Maps with critical points and discontinuities}
The main aim of this paper is to allow for the co-existence of
\emph{multiple critical points} (with possibly different critical
orders) \emph{and multiple singularities} (points at which the map
may be discontinuous and the derivative may be discontinuous or
unbounded), see Figure 1. For such maps we establish sufficient
conditions for the existence of an ergodic \emph{acip} and for
exponential mixing. We remark that although singularities contribute
to the ``expansivity'' of the system, they also give rise to
significant technical issues, in particular related to distortion
control. A noteworthy aspect of our argument is that we introduce a
unified formalism and technique for dealing with both critical and
singular points.

\begin{figure}[h]\label{map}
    \includegraphics[width=5cm]{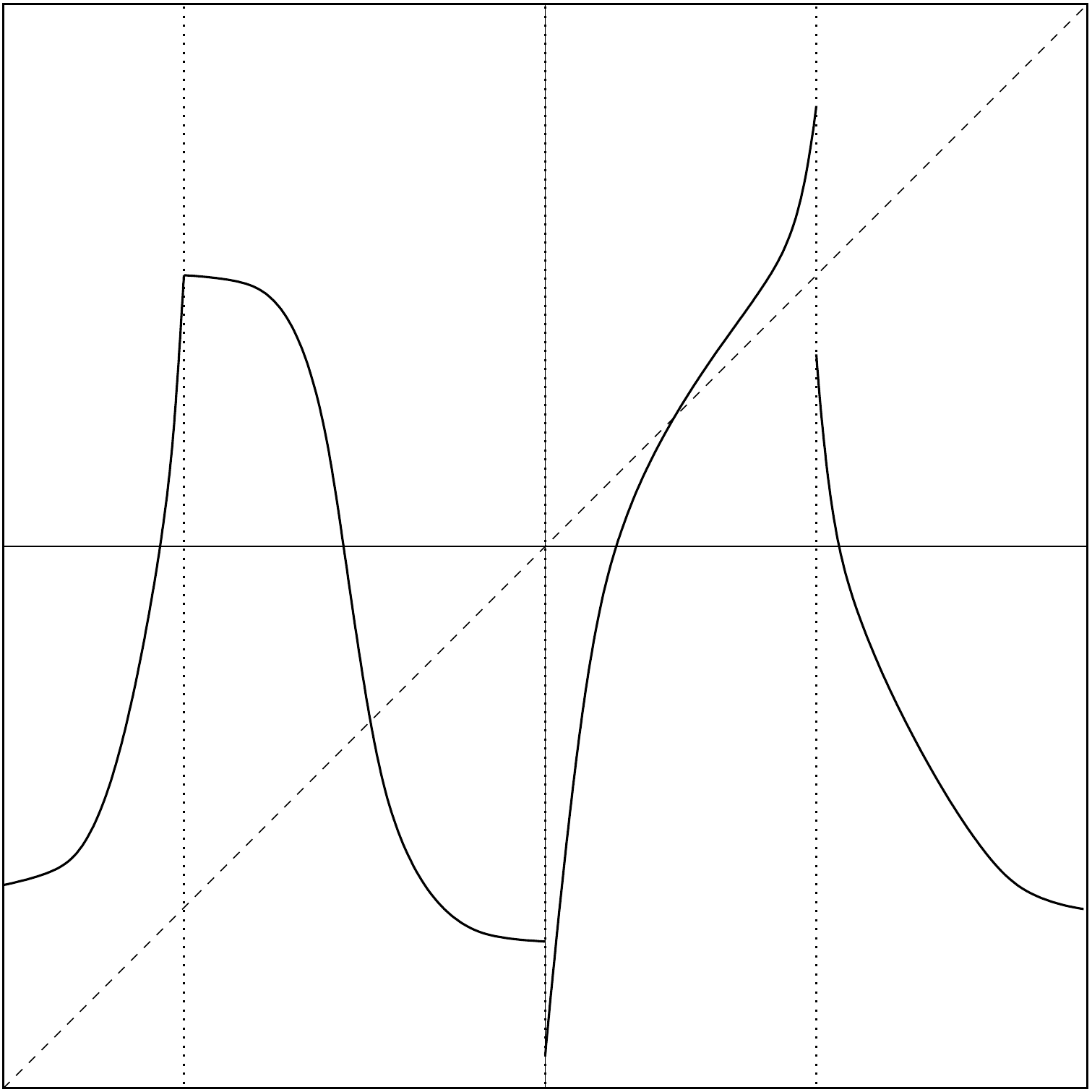}
    \caption{A map with a finite number of critical and singular points}
\end{figure}

Another class of results which should be mentioned also applies to
very general maps with non-degenerate critical and singular points
but under quite different dynamical assumptions. More specifically
the expansivity and recurrence conditions are assumed to hold
\emph{asymptotically} for \emph{Lebesgue almost every point} and not
necessarily for the critical points. It is then possible to show the
existence of an ergodic \emph{acip} \cite{AlvBonVia00} and to obtain
bounds for the rate of decay of correlations \cites{AlvLuzPindim1,
Gou04}.

\subsubsection{Induced Markov maps}
To obtain the required statistical properties
we construct a countable partition on a reference set and analyse
the recurrence time statistics through the construction of an  induced Markov
map. This strategy, which has already been applied successfully in
several contexts \cites{BruLuzStr03, AlvLuzPindim1, Hol04, AlvLuzPin,
Gou04, Dia06}, is motivated by
\cites{You98,You99} where a direct relationship between the recurrence time
statistics and mixing rates is established. A similar construction is
also carried out for general unimodal maps in \cite{DenNitUrb95}.

Inducing techniques yielding countable Markov systems have been
applied in other settings too. Jakobson has applied this idea
extensively in the context of parameter exclusion arguments for
one-dimensional maps \cites{Jak81, Jak01}. For systems admitting
infinite ergodic measures, see \cite{Aaronson} for a discussion on
\emph{Gibbs Markov maps} and their statistical properties. For maps
admitting neutral fixed points see the work of \cites{Thaler}, where
inducing schemes are used to analyse the properties of the ergodic
density. In connection with metric number theory and continued
fraction maps, inducing schemes are used in the work of
\cite{Schweiger}. A theory of \emph{Markov fibred systems} is
developed in \cites{AarDenUrb}, where inducing schemes are used to
provide results on Central Limit Theorems for a class of rational
maps admitting neutral fixed points. An approach for analysing
mixing rates using the Gibbs-Markov formalism has been developed in
\cites{Sar02, Gouezel}. Here, renewal theory techniques are applied
to establish sharp polynomial decay of correlations for Markov
systems.

A particularly interesting motivation for the construction of induced
Markov  maps is related to the development of
a \emph{thermodynamic formalism} for nonuniformly expanding maps based
precisely on the geometrical structure of (induced) Markov maps. This
is based on some recent progress on the thermodynamic formalism for
abstract countable shift spaces
\cites{Sar99, MauUrb01, BuzSar03, Sar03, JenMauUrb05}, recent
results on the relation between invariant measures for the system and
invariant measures for the Markov tower extension \cite{Zwe05}, and the
application of these results to specific classes of systems which
admit induced Markov maps such as those constructed in this paper
\cites{Yur99, PesSen05, PesZha05}.

\subsection{Non-degenerate critical and singular points.}\label{critsingdefn}
We now give the precise definition of the class of maps we consider.
Let  \( J \) be a compact interval and $f: J \to J$ a $C^2$ local
diffeomorphism outside a finite set $\mathcal{C}
\subset\mathrm{int}(J)$, of \emph{non-degenerate critical and
singular} points. These are points at which $f$ may be discontinuous
or the derivative of $f$ may vanish or be infinite. In order to
treat all possibilities in a formally unified way we consider
$\lim_{x\to c^{-}}f(x)$ and $\lim_{x\to c^{+}}f(x)$ as distinct
\emph{critical values}, thus implicitly thinking of $c^+$ and $c^-$
as distinct critical points. When referring to a neighbourhood of a
critical point, we shall always be referring to the appropriate
one-sided neighbourhood of that point. We say that the critical
points are \emph{non-degenerate} if  there exists $C>0$ and for each
$c$ there exists a constant $\ell_c\in (0,\infty) $ such that for
each $x$ in a neighbourhood of $c$ we have

\begin{equation}\label{defnCmap1}
C^{-1}|x-c|^{\ell_c}\leq |f(x)-f(c)| \leq C|x-c|^{\ell_c},
\end{equation}
and in addition we assume that
\begin{eqnarray}\label{defnCmap2}
C^{-1}|x-c|^{\ell_c-1} \!\! \!\!  &\leq |f'(x)|\leq
C|x-c|^{\ell_c-1},
\\
C^{-1}|x-c|^{\ell_c-2} \!\! \!\!  &\leq |f''(x)|\leq C|x-c|^{\ell_c-2}.
\label{defnCmap3}
\end{eqnarray}
We write $$\mathcal{C}_c=\{c:\ell_c\geq 1\}\quad\textrm{and}\quad
\mathcal{C}_s=\{c:0<\ell_c<1\},$$ to denote the set of critical and
singular points respectively. Notice that two ``distinct'' points
$c\in\mathcal{C}_c$ and $s\in\mathcal{C}_s$ may actually correspond
to the same point in $J$ for which the derivative tends to zero from
one side, and infinity from the other. We let
\[
\ell = \max_{c\in\mathcal C_{c}}\{\ell_{c}\}\quad\textrm{and}\quad
\ell^{*} = \max_{c\in\mathcal C_{s}}\{\ell_{c}\}.
\]
When there is no possibility of confusion we will often use the term
``critical point'' to refer to a point of \( \mathcal C \) without
necessarily specifying if \( c \) is really a critical point in the
traditional sense with \( \ell_{c}>1 \) or whether it is a singular
point with \( \ell_{c}\in (0,1) \) or a ``neutral'' point with \(
\ell_{c}=1 \). For
ease of exposition we assume the derivative of $f$ at the points of
discontinuity is either unbounded or zero. To accommodate bounded
derivatives, we would have to slightly modify our
argument to include the case of a return to a region where there is
a bounded discontinuity. The derivative growth and distortion
estimates would not be affected by such bounded discontinuities.

For any \( x \) let
\[
\DD(x)=\min_{c\in\mathcal{C}}|x-c|
\]
denote the distance of \( x \) from the nearest critical point, and
for small \( \delta>0 \),
let
\[
\Delta=\{x: \mathfrak D(x) \leq \delta\}
\]
denote a $\delta$-neighbourhood
of $\mathcal{C}$. For an arbitrary interval \( \omega \) we shall also
use the notation
\[
\DD(\omega) = \sup_{x\in\omega}\{\DD(x)\}
\]
to denote the distance of \( \omega \) from the critical set.

\subsection{Dynamical assumptions}\label{H1toH3}
For all initial values $x\in
J$ we let  $x_k=f^k(x)$ with  $k\in\natural$ denote the iterates of \(
x \). We formulate the following three conditions
concerning respectively the expansivity of \( f \) outside \( \Delta
\), the derivative growth and recurrence to \( \mathcal C \) of the
\emph{bona fide} critical points, and a transitivity condition on
the critical orbits.
\begin{description}
\item[(H1) Expansion outside \( \Delta \)]
\emph{There exist $\lambda>0$ and $\kappa>0$
 such that for every \( x \) and \( n\geq 1 \) such that
 $x_0=x,\dots,x_{n-1}=f^{n-1}(x)\not\in\Delta$ we have
 $$
|(f^n)'(x)|\geq \kappa \delta e^{\lambda n}.
$$
Moreover, if $x_{0}\in f(\Delta)$ or  $x_{n}\in\Delta$} we have
 $$
|(f^n)'(x)|\geq \kappa e^{\lambda n}.
$$
\item[(H2) Bounded recurrence and derivative growth along critical
orbits]\label{H2} \emph{There exists $\alpha>0$ and
$\Lambda>0$, such that for all $c\in\mathcal{C}_c$ and $\forall
k\geq 1$ we have
$$\DD(c_k)\geq\delta e^{-\alpha k}\quad\textrm{and}\quad
|(f^k)'(c_1)|\geq e^{\Lambda k}.$$}
\item[(H3) Density of preimages]
\emph{There exists  $c^*$ in $\mathcal{C}$ whose preimages are dense
in a maximal $\hat{J}\subset J$, where $\hat{J}$ is a union of
intervals (note that
the maximal property of $\hat{J}$ implies
$f^{-1}\hat{J}=\hat{J}$).
In addition, this set of preimages does not contain any
other point in $\mathcal{C}$ .}
\end{description}

We suppose throughout that
\begin{quote}\emph{
$f$ has a finite number of
non-degenerate critical and singular points and satisfies conditions
(H1)-(H3) for sufficiently small constants $\alpha>0$ and
$\delta>0$ in relation to $\Lambda,\lambda$ and $\kappa$.}
\end{quote}

It was shown in \cite{LuzTuc99} that these conditions are
satisfied for a large (positive measure) set of parameters in an
open class of one-parameter families of \emph{Lorenz-like maps with
singularities and criticalities}, in particular, our results apply
to such maps. Of course they are also satisfied by many smooth maps
but in these cases the results are already known \cite{BruLuzStr03}.

\subsection{Markov structures}
Consider the system $(f,\hat{J},m)$, where $m$, the reference measure
is taken to be Lebesgue measure.
The main result of this paper is that conditions (H1) to (H3) imply
the existence of an induced full branched Markov map with an
exponential tail of the return time function.

\begin{theorem}\label{thm_markov}
There exists a (one-sided) neighbourhood  $\Delta^{*}\subset\hat{J}$
of the critical point \( c^{*} \),  a countable
partition\footnote{Here and for the rest of the paper we will always
talk about partitions with the implicit understanding that we are
refering to partitions mod 0, i.e. up to a set of zero Lebesgue
measure.} $\mathcal{Q}$ of \( \Delta^{*} \) into
       subintervals, a function \( T: \Delta^{*} \to \mathbb N \)
       defined almost everywhere and constant on elements of
       the partition \( \mathcal Q \), and
       constants \( C, \tilde D, \gamma, >0 \) such that
       for all \( \omega\in\mathcal Q \) and \( T=T(\omega) \)
 the map \( f^{T}:\omega \to \Delta^{*} \) is a \( C^{2} \)
 diffeomorphism and satisfies the following bounded distortion
 property: for all \( x,y\in\omega \)
    \[\left| \frac{(f^{T})'(x)}{(f^{T})'(y)} -1 \right|
    \le \tilde{\mathcal{D}} |f^{T}(x)-f^{T}(y)|.
    \]
Moreover, the ``return time function'' \( T \) has an exponentially
decreasing tail:
    \[
    |\{T>n\}|<Ce^{-\gamma n}.
    \]
\end{theorem}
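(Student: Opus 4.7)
The plan is to follow the now-standard Young-tower inducing construction, suitably adapted to accommodate both critical points in $\mathcal C_c$ and singular points in $\mathcal C_s$ within a unified scheme. First I would fix the reference interval $\Delta^*$ as a one-sided neighbourhood of the distinguished point $c^*$ of (H3), taken small relative to $\d$. The partition $\Q$ is then built inductively: start with $\o_0=\Delta^*$, iterate forward, and subdivide whenever the image of a component intersects $\D$ so as to track the depth of each close approach to $\mathcal C$. A component $\o$ is declared to have \emph{returned} at time $n$ as soon as $f^n(\o)$ covers $\Delta^*$, which we arrange by invoking (H3) to place a preimage of $c^*$ inside any sufficiently long interval in $\hat J$; $\o$ is chopped into its pieces mapping diffeomorphically onto $\Delta^*$ and the complementary piece is recycled into the inductive construction.

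The engine of the proof is a pair of interlocking estimates. The first is an \emph{expansion estimate}: combine (H1) for iterates outside $\D$ with a \emph{binding period} argument to handle close approaches to some $c\in\mathcal C$. When an orbit enters $\D$ at distance $r$ from $c$, it is shadowed by the critical orbit $c_k$ for a period $p(r)$ during which (H2), together with (1)--(3), allows us to recover and in fact amplify the derivative lost (or unboundedly gained) at the entry. To cover the cases $\ell_c>1$ (genuine critical) and $0<\ell_c<1$ (singular) uniformly, I would carry $\ell_c$ as a parameter throughout and use (1)--(3) to translate between distances to $c$ and to $f(c)$. The second is a \emph{bounded distortion} estimate for the long branch $f^n|_\o$: using the $C^2$ control in (1)--(3), one reduces it to summability of $|f^k(\o)|/\DD(f^k(\o))$ along the orbit, which follows from exponential growth of $|f^k(\o)|$ between binding periods and the slow approach of critical orbits to $\mathcal C$ guaranteed by (H2).

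The tail bound on $T$ is then a large-deviation estimate on symbolic itineraries. Any point still unreturned at time $n$ must have spent a large fraction of its iterates free and expanding (growing its host piece to macroscopic size before the next $c^*$-preimage triggers a return), or it must have accumulated deep close returns whose cumulative depth is controlled by (H2) with $\alpha$ chosen small compared to $\Lambda$ and $\l$. Counting itineraries weighted by the Lebesgue measure of their surviving components, and using the distortion estimate to compare this measure to the image measure on $\Delta^*$, yields $|\{T>n\}|\le Ce^{-\gamma n}$ for a suitable $\gamma>0$.

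The main obstacle I anticipate is distortion and expansion control across singular points, where the derivative blows up like $|x-c|^{\ell_c-1}$ with $\ell_c<1$. The standard critical-point binding argument does not apply verbatim, because the ``loss of derivative'' on entry becomes instead a gain; yet this very gain forces the image of a small interval hitting the singularity to become macroscopically long almost instantly, which must be exploited to bound the number of deep returns in a complementary way. Encoding both mechanisms within the same inductive partitioning and the same binding formalism, so that (H1)--(H3) yield a single exponential tail estimate, is the principal technical work, and is where I would expect most of the effort to be concentrated.
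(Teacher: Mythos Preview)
Your outline is essentially the paper's own strategy: binding periods to recover expansion after returns to $\mathcal C$, distortion via summability of $|\omega_j|/\DD(\omega_j)$, and a large-deviation count of itineraries for the tail. Two organizational points where the paper differs from what you anticipate are worth flagging. First, the ``main obstacle'' you describe---a unified binding formalism carrying $\ell_c$ through both the critical and singular regimes---is resolved more trivially than you expect: the paper simply sets $p(r)=0$ when the return is to a component of $\Delta$ around $c\in\mathcal C_s$, since the one-step derivative gain $|f'(x)|\gtrsim e^{(1-\ell_c)r}$ already gives the required expansion without any shadowing, and the distortion contribution of that single iterate is handled by the same $|\omega_j|/\DD(\omega_j)$ bound as any other return. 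No complementary mechanism is needed. Second, the paper interposes an explicit intermediate notion of \emph{escape} (an interval reaching length $\geq\delta$) between the inductive chopping and the full return to $\Delta^*$: one first proves an exponential tail for escape times on any interval of size $\asymp\delta$, and then, since each escape yields a definite-proportion subinterval returning to $\Delta^*$ within a uniformly bounded number of further iterates (this is where (H3) enters), a second large-deviation argument over the sequence of escapes gives the tail of $T$. This two-stage structure cleanly separates the growth estimate from the recurrence estimate and is what makes the bookkeeping tractable.
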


Our construction also gives a couple of other interesting properties
which will be used in the applications of the theorem. Namely, the
induced map is uniformly expanding in the sense that there exists
some \( \lambda' >1 \) such that for all \( \omega\in\mathcal Q \)
and all \( x,y\in\omega \)
\[
|f^{T}(x)-f^{T}(y)|\ge \lambda' |x-y|
\]
and satisfies a bounded contraction property in the sense that there
exists a constant \( K>0 \) such that for all \( \omega\in\mathcal Q
\), \( x,y\in\omega \) and \(1\leq k < T=T(\omega) \)
\[
|f^k(x)-f^k(y)|\le K |f^{T}(x)-f^{T}(y)|.
\]

\subsection{Statistical properties}
Recent results of Young \cite{You99} link the rate of decay of
the tail of the return times for an induced full branched Markov map
to several statistical properties of the original system. Combining
these general results with our main theorem and estimates we
therefore obtain the following results.

\begin{theorem}\label{thm_erg}
There exists an absolutely continuous $f$-invariant probability
measure $\mu$ which is ergodic and supported on $\hat{J}$. Moreover
$f$ is non-uniformly expanding in the sense that the Lyapunov
exponent of $\mu$ is positive, i.e. $\int\log|f'(x)|\,d\mu(x)>0.$
\end{theorem}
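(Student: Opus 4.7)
The plan is to deduce Theorem \ref{thm_erg} from the induced Markov structure of Theorem \ref{thm_markov} via the abstract framework of Young \cites{You98,You99}. First I would produce an invariant measure for the induced map $F := f^T : \Delta^* \to \Delta^*$. Since $F$ is a full-branched piecewise $C^2$ uniformly expanding Markov map with the bounded distortion estimate of Theorem \ref{thm_markov}, the folklore theorem for Gibbs--Markov maps (e.g.\ via the transfer operator acting on a suitable cone of Lipschitz densities along branches) yields a unique absolutely continuous $F$-invariant probability measure $\tilde\mu$ on $\Delta^*$ whose density is bounded away from $0$ and $\infty$, and $\tilde\mu$ is mixing, hence ergodic, for $F$. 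Ergodicity can also be seen directly: bounded distortion forces any $F$-invariant set to agree $\tilde\mu$-a.e.\ with a union of $\mathcal Q$-branches, and full-branchedness then leaves only $\emptyset$ or $\Delta^*$.

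Next I would saturate $\tilde\mu$ to produce an $f$-invariant probability measure on $\hat J$. Define
\[
\mu \;=\; \frac{1}{Z}\sum_{n=0}^{\infty} f^n_*\bigl(\tilde\mu|_{\{T>n\}}\bigr), \qquad Z \;=\; \int T\,d\tilde\mu.
\]
The exponential tail $|\{T>n\}|\le C e^{-\gamma n}$ and boundedness of the density of $\tilde\mu$ give $Z=\sum_n \tilde\mu\{T>n\}<\infty$, so $\mu$ is a probability measure; a telescoping computation shows $f_*\mu=\mu$, and absolute continuity is preserved under the pushforwards $f^n_*$ using the bounded contraction property of Theorem \ref{thm_markov} to control distortion on the tower pieces. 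By (H3) and the full-branched structure of $F$, forward iterates of any nonempty open subset of $\Delta^*\subset\hat J$ cover $\hat J$ modulo null sets, so $\mathrm{supp}(\mu)=\hat J$. Ergodicity of $\mu$ for $f$ then follows from ergodicity of $\tilde\mu$ for $F$ by the standard argument that an $f$-invariant set of intermediate $\mu$-measure would restrict to an $F$-invariant subset of $\Delta^*$ of intermediate $\tilde\mu$-measure.

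Finally, for the Lyapunov exponent, uniform expansion of $F$ gives $\log|F'|\ge\log\lambda'>0$ pointwise. Bounded distortion on $\omega\in\mathcal Q$ yields $\log|F'(x)|\asymp \log(|\Delta^*|/|\omega|)$; combined with the exponential tail of $T$, which together with uniform expansion forces $|\omega|$ to decay at worst exponentially in $T(\omega)$, this gives $\int\log|F'|\,d\tilde\mu<\infty$. Abramov's formula then yields
\[
\int \log|f'|\,d\mu \;=\; \frac{1}{Z}\int \log|F'|\,d\tilde\mu \;\ge\; \frac{\log\lambda'}{Z} \;>\; 0.
\]
The main obstacle I anticipate is not the existence or ergodicity of $\mu$, but making the Abramov identity rigorous in the presence of singularities where $\log|f'|$ is unbounded: one must show that the contributions of intermediate iterates $f^k(x)$ with $0<k<T(x)$ that pass near $\mathcal C$ remain $\tilde\mu$-integrable. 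This uses the bounded contraction and bounded distortion controls of Theorem \ref{thm_markov} together with the non-degeneracy estimates \eqref{defnCmap1}--\eqref{defnCmap3}, which bound $|\log|f'||$ by a summable logarithmic singularity near each $c\in\mathcal C$.
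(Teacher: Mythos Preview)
Your proposal is correct and follows essentially the same route as the paper: obtain an ergodic acip $\tilde\mu$ for the induced full-branch Markov map $F=f^T$ by classical/folklore arguments, use the exponential tail to get $\int T\,d\tilde\mu<\infty$, and push forward and normalise to obtain the ergodic $f$-invariant acip $\mu$ supported on $\hat J$. The paper's own argument (Section~\ref{sec_tower}) is in fact much terser than yours; in particular it disposes of the Lyapunov exponent in a single sentence (``the expansivity estimates obtained above clearly imply \ldots''), whereas you make the Abramov identity explicit and correctly flag the integrability of $\log|f'|$ near $\mathcal C$ as the only genuine technical point---a concern the paper does not address at all.
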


The fact that the measure \( \mu \) has
positive Lyapunov exponent implies a degree of \emph{sensitive
dependence on initial conditions} and \emph{stochastic-like} behaviour.
Indeed, in this case we can show that the dynamics is \emph{stochastic}
in a more precise and quantifiable way.
We recall that  a measure $\mu$ is mixing if
\begin{equation*}
\big|\mu(A\cap f^{-n}(B))-\mu(A)\mu(B)\big|\rightarrow 0
\end{equation*}
as $n\rightarrow\infty$, for all measurable sets $A,B$. This
corresponds to a property of asymptotic ``loss of memory''.  To
quantify the \emph{speed of mixing} we define,
for two arbitrary $L^{2}(\mu)$ functions  $\phi,\psi:J\to\mathbb{R}$,
 the \emph{correlation function}
\begin{equation*}
C_n(\phi,\psi,\mu)=\left |\int \psi(\phi\circ f^{n})d\mu -
\int \psi d\mu\int \phi d\mu\right |.
\end{equation*}
Notice that if \( \phi, \psi \) are characteristic functions of
measurable sets \( A, B \) this is exactly the quantity given above
in the definition of mixing. Indeed, using standard approximation
arguments,  it is well known that \( C_{n}\to 0 \) if \( \mu \) is mixing.
However, in general it is not possible to obtain a uniform bound for the
rate of decay  of \( C_n(\phi,\psi,\mu) \) if both observables belong to
a class of
functions as big as $L^2(\mu)$ or even \( L^{\infty}(\mu) \) which
includes characteristic functions, and to obtain concrete
estimates it is necessary to restrict oneself to a smaller class of
functions.

\begin{theorem}
There exists a $k\geq 1$ such that $(f^{k},\hat{J}, \mu)$ has
    exponential decay of correlations
    for  functions $\phi\in L^{\infty}$ and $\psi$
    H\"older continuous.
 \end{theorem}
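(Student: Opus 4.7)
The plan is to invoke Young's abstract decay-of-correlations theorem from \cite{You99} and feed it the Markov structure produced by Theorem \ref{thm_markov}. Once the induced map $f^T : \Delta^* \to \Delta^*$ has been packaged as a tower, essentially no new dynamical input is required: everything that Young asks for has already been established.

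First I would form the standard Young tower $\Delta = \{(x, n) : x \in \Delta^*,\ 0 \le n < T(x)\}$ with tower map $F(x,n) = (x, n+1)$ for $n < T(x)-1$ and $F(x, T(x)-1) = (f^{T}(x), 0)$. The base $\Delta^* \times \{0\}$ is partitioned by $\mathcal Q$, and the projection $\pi(x,n) = f^n(x)$ semi-conjugates $F$ to $f$. I would then verify, one by one, the axioms of \cite{You99}: the Markov property of $f^T$ on $\mathcal Q$, uniform expansion (the $\lambda'>1$ contraction estimate stated just after Theorem \ref{thm_markov}), bounded distortion (the estimate with constant $\tilde{\mathcal D}$), the bounded-contraction-along-orbits property (the $K$-estimate), and the exponential tail $|\{T > n\}| < C e^{-\gamma n}$. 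Each of these was explicitly recorded in Theorem \ref{thm_markov} and the remarks following it.

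Given these axioms, Young's theorem produces an $F$-invariant absolutely continuous probability measure on the tower whose push-forward by $\pi$ is an $f$-invariant acip; by the uniqueness statement in Theorem \ref{thm_erg} this must be the measure $\mu$. Young's framework then decomposes $\mu$ into finitely many ergodic components that are cyclically permuted by $f$; more precisely, letting $d$ be the greatest common divisor of the return times $\{T(\omega) : \omega \in \mathcal Q\}$, the system $(f^d, \hat J, \mu)$ splits into mixing pieces on each of which the transfer operator of $F^d$ has a spectral gap. Setting $k = d$, the exponential contraction of correlations in the tower transports, via $\pi$, to the bound
\[
C_n(\phi, \psi, \mu) \le C_{\phi,\psi}\, e^{-\gamma' n}
\]
for all $n \ge 1$, any $\phi \in L^\infty(\mu)$ and any H\"older continuous $\psi$, with $\gamma' > 0$ depending only on $\gamma$ and the H\"older exponent of $\psi$.

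The one genuinely delicate point, and the reason the statement reads ``there exists $k \ge 1$'' rather than simply ``$f$ has exponential decay of correlations'', is precisely the periodicity issue: without an a priori argument that $\gcd\{T(\omega)\} = 1$, one cannot rule out that the tower decomposes into $d > 1$ cyclic components, and mixing may fail for $f$ itself while holding for $f^d$. Verifying $\gcd = 1$ would require an additional combinatorial argument about the return-time spectrum of the construction in Theorem \ref{thm_markov}, which I would not attempt here; the weaker formulation with some iterate $f^k$ is a direct and automatic consequence of Young's result and is sufficient for the statistical applications in view.
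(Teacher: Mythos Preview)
Your proposal is correct and follows essentially the same route as the paper: build the Young tower over $\Delta^*$, feed the expansion, distortion, bounded-contraction and exponential-tail properties from Theorem~\ref{thm_markov} into the framework of \cite{You99}, and absorb the possible periodicity $\gcd\{T(\omega)\}>1$ by passing to an iterate $f^k$. The only place the paper adds detail beyond your sketch is the routine translation of the Euclidean-metric bounds (the $\tilde{\mathcal D}$- and $K$-estimates) into the separation-time metric $\sigma^{s(x,y)}$ in which Young's hypotheses and the class $\mathcal H_\sigma$ of admissible observables are phrased; you have already listed exactly the ingredients that make this translation work.
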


The exponential rate of decay represents a particularly strong form of
mixing and indicates that, notwithstanding the presence of critical
points and the lack of smoothness due to the presence of singularities,
the system is, statistically, very similar to a uniformly expanding
system. Technically, this is a consequence of the bounded recurrence
and exponential growth conditions (H2) along the critical orbits.
Assuming weaker growth and recurrence conditions may give rise to
slower rates of mixing. This has been shown to be the case in the
smooth case  \cite{BruLuzStr03} and it would clearly be interesting to
generalize that result to allow for the presence of singularities.
Another extension of our
work could be to study statistical properties of intermittent
systems with coexisting critical points and singularities, i.e. to
include the presence of a neutral periodic point.

A further natural question concerns the convergence of averages of
functions along orbits to their expected values, in particular we can
ask if a distributional law like the
\emph{Central Limit Theorem} (CLT) holds:
for any measurable set $A\subset\mathbb{R}$
and $\phi:J\to\mathbb{R}$ with
$\int\phi\,d\mu=0,$ there exists some \( \sigma>0 \) such that
\begin{equation*}
\mu\left\{x\in J:\frac{1}{\sqrt{n}}\sum_{i=0}^{n-1}\phi\circ f^{i}(x)\in
A\right\}\to\frac{1}
{\sigma\sqrt{2\pi}}
\int_{A}e^{-\frac{t^2}{2\sigma^{2}}} dt,\quad \text{ as } n\to\infty.
\end{equation*}
We have the following result:
\begin{theorem}\label{thm_cor}
The Central Limit Theorem holds for $(f,\hat{J}, \mu)$ and any
H\"older observable \( \phi \) such that \( \phi \circ f \neq
\varphi \circ f - \varphi\) for any \( \varphi \).
\end{theorem}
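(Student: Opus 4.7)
The plan is to deduce the CLT as a consequence of Theorem \ref{thm_markov} together with the general results of Young \cite{You99} on statistical limit theorems for systems admitting a Markov tower with exponential return time tails. In particular, the induced map $F=f^{T}:\Delta^{*}\to\Delta^{*}$ constructed in Theorem \ref{thm_markov}, together with the partition $\mathcal Q$ and the return time function $T$, gives rise to a Young tower $(\hat\Delta,\hat F,\hat\mu)$ over $(\Delta^{*},F)$, and the conclusions stated immediately after Theorem \ref{thm_markov}---bounded distortion, uniform expansion $|F(x)-F(y)|\ge \lambda'|x-y|$, and the bounded contraction property $|f^{k}(x)-f^{k}(y)|\le K|f^{T}(x)-f^{T}(y)|$ for $1\le k<T$---are precisely the hypotheses needed to apply the abstract CLT of Young's framework, and the exponential tail $|\{T>n\}|\le Ce^{-\gamma n}\ll 1/n^{2}$ ensures the required summability.

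The first concrete step is to lift the H\"older observable $\phi:\hat J\to\mathbb R$ to a function $\hat\phi$ on the tower in the natural way, $\hat\phi(x,k)=\phi\circ f^{k}(x)$. Using the bounded contraction property, one checks that if $(x,k)$ and $(y,k)$ lie in the same element of the tower partition at the same level, then $|\hat\phi(x,k)-\hat\phi(y,k)|\le K^{\eta}[\phi]_{\eta}\,|f^{T}(x)-f^{T}(y)|^{\eta}$, which combined with uniform expansion of $F$ gives geometric decay of the variation of $\hat\phi$ along dynamical cylinders. This is exactly the symbolic H\"older/Lipschitz regularity with respect to the separation metric on $\hat\Delta$ required by Young's theorem.

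The second step is to apply Young's abstract CLT to $(\hat F,\hat\mu,\hat\phi)$: it yields convergence in distribution of $n^{-1/2}\sum_{i=0}^{n-1}\hat\phi\circ\hat F^{i}$ to a Gaussian with some variance $\sigma^{2}\ge 0$, together with the identification $\sigma^{2}=0$ if and only if $\hat\phi$ is an $L^{2}(\hat\mu)$-coboundary for $\hat F$. A standard argument (pushing the coboundary down through the projection $\pi:\hat\Delta\to\hat J$ which semiconjugates $\hat F$ to $f$) shows that $\hat\phi=\hat\varphi\circ\hat F-\hat\varphi$ would produce $\varphi:\hat J\to\mathbb R$ with $\phi=\varphi\circ f-\varphi$, contradicting the hypothesis; hence $\sigma>0$. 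Since the projection $\pi$ intertwines Birkhoff sums and pushes $\hat\mu$ to a multiple of $\mu$, the CLT descends to $(f,\hat J,\mu)$.

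The main technical obstacle is the regularity of $\hat\phi$ on the tower: H\"older continuity of $\phi$ on $\hat J$ does \emph{not} automatically give the required symbolic regularity at high tower levels, since orbits inside a single $\omega\in\mathcal Q$ may pass arbitrarily close to critical/singular points where $f$ fails to be Lipschitz. The bounded contraction estimate in Theorem \ref{thm_markov} is exactly what bypasses this difficulty, by comparing distances at intermediate times directly to the separation at the return time rather than propagating them iterate-by-iterate. The remainder---non-degeneracy of $\sigma^{2}$ via the non-coboundary assumption, and the descent from the tower---is routine within the Young-tower formalism.
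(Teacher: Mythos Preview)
Your proposal is correct and follows essentially the same route as the paper: build the Young tower from Theorem~\ref{thm_markov}, verify that the bounded distortion and bounded contraction estimates yield symbolic H\"older regularity for the lifted observable (this is exactly the content of the two lemmas in Section~\ref{sec_tower}), invoke Young's abstract CLT on the tower, and project back via the semiconjugacy $\pi$. The one point you pass over that the paper flags explicitly is that Young's theorem, as quoted, assumes $\gcd\{T(\omega):\omega\in\mathcal Q\}=1$, which is not verified here; the paper handles this by the remark (citing \cite{MelNic04} and \cite{ChaGou06}) that the CLT for $(f,\hat J,\mu)$ does not require mixing of $f$ itself, so the conclusion survives even when only some power $f^{k}$ is mixing.
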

Notice that Theorem \ref{thm_cor} does not depend on the mixing properties of
$f$, see \cite{MelNic04} and \cite{ChaGou06}.

\subsection{Overview  of the paper}
In Section \ref{escape_part} we introduce the interval \( \Delta^{*}
\) and begin the combinatorial construction of the induced map. We
define a neighbourhood \( \Delta^{*}\subset \Delta \) (of the
critical point \( c^{*} \), as in (H3)), with \( |\Delta^{*}| \ll
\delta \), on which we induce a Markov map. Our basic approach is to
iterate $\Delta^{*}$ under $f$ and wait for $\Delta^{*}$ to return
to $\Delta$. Using  the expansion and recurrence assumptions (H1)
and (H2) we can show that this happens after some finite number of
iterations,  $k$ say. If $f^k(\Delta^{*})$ comes close to
$\mathcal{C}_c$, then the subinterval $f^{k}(\Delta^{*})\cap\Delta$
will be contracted under iteration. For intersections with
$\mathcal{C}_s$, we do not lose expansion, but we fail to obtain
bounded distortion on future iterations of $f^{k}(\Delta^{*})$;
furthermore  $f^{k}(\Delta^{*})$ may become ``cut'' by the
singularity. To overcome these issues, we introduce a systematic
chopping procedure on $f^k(\Delta^{*})$, using a fixed partition on
$\Delta$. The resulting chopped up subintervals are iterated
independently, and at some later time these subintervals will return
back to $\Delta$. Using the fixed partition, we introduce a
combinatorial method which keeps track of their location in relation
to the critical set $\mathcal{C}$.

Following this combinatorial construction, a chopping
method is devised in such a way that $f$ (and its iterates) act
diffeomorphically, with bounded distortion on each chopped-up
subinterval. Moreover,
it could be envisaged that components may get cut too
fast in the chopping procedure, or fail to grow in size due to
frequent returns to $\mathcal{C}$. This can happen, but we
will show that \emph{on average} there is a tendency to grow in size
(at an exponential rate).

Sections \ref{sec_bind} and \ref{distest} contain two fundamental
technical results, one on the recovery of the loss of expansion (due
to the small derivative) for returns to $\mathcal{C}_c$ by shadowing
the critical orbit, and another on some global distortion bounds
which follow from the combinatorial construction. In the core
Section \ref{escest}, we estimate how long it takes on average for a
subinterval to reach \emph{large scale} and show that intervals grow
to large scale exponentially fast. In Section \ref{ret_time} we
collect all our results to prove an exponential tail estimate on the
return time function.  Finally in Section \ref{sec_tower} we explain
how the properties of the induced Markov map imply the statistical
estimates given in the other theorems.

\section{The induced map}\label{escape_part}

In this section we give the complete algorithm for the construction of
the induced Markov map as required by the statement in
Theorem~\ref{thm_markov}.  The fact that this algorithm successfully produces
an induced Markov map with the required properties is not immediate and
follows from the estimates in the following sections.

\subsection{The critical partition}
\label{critpart}
For each $c\in\mathcal{C}$ and for any integer $r\geq 1$ we let
\[
I_{r}(c)= [c+e^{-r},c+e^{-r+1}) \quad \text{and} \quad
I_{-r}(c)=(c-e^{-r+1},c-e^{-r}].
\]
We suppose without loss of
generality that \( r_{\delta} = \log \delta^{-1} \in\mathbb N\)
and,
each $c\in\mathcal{C}$ let
\begin{equation*}
\Delta_{c}=
\begin{cases}
\{c\}\cup \bigcup_{r\geq r_{\delta}+1} I_{r}(c), &\text{if $c=c^{+}$},\\
\{c\}\cup \bigcup_{r\leq -r_{\delta}-1} I_{r}(c), &\textrm{ if
$c=c^{-}$}
\end{cases}
\end{equation*}
and
\begin{equation*}
\hat \Delta_{c}=
\begin{cases}
\{c\}\cup \bigcup_{r\geq r_{\delta}} I_{r}(c), &\text{if $c=c^{+}$},\\
\{c\}\cup \bigcup_{r\leq -r_{\delta}} I_{r}(c), &\textrm{ if
$c=c^{-}$}.
\end{cases}
\end{equation*}
Notice that \( \hat\Delta_{c} \) is just \( \Delta_{c} \) union an
extra interval of the form \( I_{\pm r_{\delta}} \).
We further subdivide each $I_r\subset \Delta$ (and not the additional \(
I_{\pm r_{\delta}}\subset \hat\Delta\setminus\Delta \))
into $r^2$ intervals $I_{r,j}$,
$j\in [1,r^2]$ of equal length, this defines the \emph{critical
partition}  $\mathcal{I}$ of \( \Delta \). Finally, for each
\( r\geq r_{\delta}+1, \) and \( j\in [1,r^2] \) we let
\( \hat I_{r} \)
denote the union of \( I_{r} \) and its two neighbouring intervals. In
particular, if \( I_{r,j}=I_{r_{\delta}+1, (r_{\delta}+1)^{2}} \) is
one of the two extreme intervals of \( \Delta \), then \( \hat I_{r,j} \)
denotes the union of this interval with the adjacent intervals \(
I_{r, j-1} \) and \( I_{r_{\delta}} \) (which has not been subdivided
into smaller subintervals).

\subsection{The binding period}
\label{ss:binding}
Using the critical partition defined above, we formalize,
following \cite{BenCar85}, the notion of a
\emph{binding period} during which points in the critical region \(
\Delta \) \emph{shadow} the orbit of the critical point.
For each \( r\geq r_{\delta}+1 \), \(
I_{r}\in \mathcal I \) belonging to the component of \( \Delta \)
containing a critical point \( c\in \mathcal C \), we define
\[
p(r)=\begin{cases} 0 &\text{ if } c\in\mathcal{C}_{s}\\
\max\{k:  |f^{j+1}(x) - f^{j+1}(c)|\leq \delta e^{-2\alpha j} \
\forall\ \ x\in \hat I_{r}, \ \forall \  j  \leq k\}
&\text{ if } c\in\mathcal{C}_{c}.
\end{cases}
\]
We shall show in Section \ref{sec_bind} that the binding period is
long enough for the orbit of some point \( x \) close to a critical
point \( c\in{\mathcal C}_{c} \) to build up enough derivative growth
to more than compensate the small derivative coming from its proximity
to \( c \) in its starting position.

\subsection{Escape times}
\label{ss:escape times}
Let  \( I\subset \hat{J} \) be an arbitrary  interval with \( |I|<
\delta \).
We construct a countable partition
\( \mathcal P=\mathcal P(I) \) of \( I \) into
subintervals, which
we call the \emph{escape partition} of \( I \), and
a stopping time function \( E: I\to \mathbb N \)
constant on elements of \( \mathcal P \).
Each element \( \omega \in \mathcal P \) has some combinatorial
information attached to its orbit up to time \( E(\omega) \) and
satisfies
\[
|f^{E(\omega)}(\omega)|\geq\delta.
 \]
We define the construction inductively as follows. Fix \( n\geq 1 \)
and suppose that a certain set of subintervals of \( I \)
have been defined for which \( E < n \). Let \( \omega \) be a
component of the complement of the set \( \{x\in I: E(x) < n\} \).

\subsubsection*{Inductive assumptions}
We suppose inductively that the following combinatorial information
is also available, the meaning of which will become clear when the
general inductive step of the construction is explained below:
\begin{itemize}
    \item every iterate \( i=1,\ldots, n \) is classified as either a
    \emph{free} iterate or a \emph{bound} iterate for \( \omega \).
    \item the last free iterate before a bound iterate is called
    either an \emph{essential return} or an \emph{inessential return}.
    \item associated to each essential and inessential return there is a
    positive integer called the \emph{return depth}.
 \end{itemize}
We now consider various cases depending on
the length and position of the interval \(
\omega_{n}=f^{n}(\omega) \) and on whether \( n \) is a free or bound
iterate for \( \omega \).

\subsubsection*{Escape times}
If  \( n \) is a free time for \( \omega \) and $|\omega_n|\geq\delta$
we say that $\omega$ has \emph{escaped}.  We  let
$\omega\in\mathcal P$ and define \( E(\omega) = n
\).
We call $\omega_n$ an escape interval.

\subsubsection*{Free times}
If \( n \) is a free time for \( \omega \) and $|\omega_{n}|<\delta$
we distinguish three cases:
\begin{enumerate}
\item If $\omega_n\cap\Delta=\emptyset,$
we basically do nothing: we
do not subdivide \( \omega \) further, do not add any combinatorial
information, and define \( n+1 \) to be again a free iterate for \(
\omega \).
\item If $\omega_n\cap\Delta\neq\emptyset$ but $\omega_n$ does not intersect
more than two adjacent $I_{r,j}$'s,
we do not subdivide \( \omega \) further at this moment, but add some
combinatorial information in the sense that we
say that $n$ is an \emph{inessential}
return time with \emph{return depth} $r$ equal to the minimum \( r \)
of the intervals \( I_{r,j} \) which \( \omega_{n} \) intersects.
Moreover we  define all iterates \( j=n+1,\ldots, n+p \) as
\emph{bound iterates} for \( \omega \) (\( \omega \) does not get
subdivided during these iterates, see below), where \( p=p(r) \) is
the binding period associated to the return depth \( r \) as defined
in Section \ref{ss:binding}.
\item If $\omega_n\cap\Delta\neq\emptyset$ and
 $\omega_n$ intersects more than three adjacent
 $I_{r,j}$'s we subdivide
 \( \omega \) into subintervals $\omega_{r,j}$ in such a way that
each $\omega_{r,j}$ satisfies
$$I_{r,j}\subset f^n\omega_{r,j}\subset\hat{I}_{r,j}.$$
We say that $\omega_{r,j}$ has an \emph{essential} return at time
$n$, with return depth $r$ and define the corresponding binding period
as in the previous case.
\end{enumerate}

\subsubsection*{Bound times}
If \( n \) is a bound time for \( \omega \) we also basically do
nothing.  According to the construction above, \( n \) belongs to some
binding period \( [\nu+1, \nu+p] \) associated to a previous essential
or inessential return at time \( \nu \). So, if \( n< \nu+p \) we say
that \( n+1 \) is (still) a bound iterate, if \( n=\nu+p \) then \( n+1 \)
is a free iterate.

\subsubsection*{Returns following escape times}
\label{the return partition} The notion of an escape time is meant
to formalize the idea that the interval in question has reached
large scale, and one intuitive consequence of this is that it should
therefore ``soon'' make a full return to \( \Delta\).

\begin{lemma}\label{retpart} There exists $\delta^{*}>0$,
$t^{*}\in\mathbb{N}$ and $\xi>0$, all depending on \( \delta \),
such that for
\( \Delta^{*}=(c^{*}-\delta^{*}, c^{*}+\delta^{*}) \) a \( \delta^{*} \)
neighbourhood of the point \( c^{*} \) (recall condition H3) and for any
interval \( \omega \subset \hat J \) with \( |\omega|\geq \delta \),
there exists a subinterval \( \tilde\omega\subset\omega \) such that:
\begin{itemize}
\item $f^{t_0}$ maps $\tilde\omega$ diffeomorphically onto
$\Delta^{*}$ for some $t_{0}\leq t^{*}$,
\item $|\tilde\omega^{*}|\geq \xi |\tilde\omega|$,
\item both components of $\omega\setminus\tilde\omega$ are of size
$\geq\delta/3.$
\end{itemize}
\end{lemma}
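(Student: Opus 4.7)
The plan is to exploit the density hypothesis (H3) together with a compactness argument to extract a uniform bound on the order of preimages. I first claim that there exist $t^{*}\in\mathbb{N}$ and $\eta>0$ (with $\eta$ much smaller than $\delta$) such that every point of $\hat J$ lies within distance $\eta$ of some preimage of $c^{*}$ of order at most $t^{*}$. Indeed, for each $N\ge 1$ the set
\[
A_{N} \;=\; \bigcup_{n\le N}\;\bigcup_{y\in f^{-n}(c^{*})\cap \hat J} (y-\eta,\,y+\eta)
\]
is open, and (H3) yields $\bigcup_{N}A_{N}\supset \hat J$; since the closure $\overline{\hat J}$ is a closed subset of the compact interval $J$, a finite subcover exists, and nestedness of the $A_{N}$ collapses this to a single $t^{*}$ with $A_{t^{*}}\supset \overline{\hat J}$.

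Given an interval $\omega\subset \hat J$ with $|\omega|\ge\delta$, I would trim slightly more than $\delta/3$ from each end of $\omega$ to obtain a central subinterval $\omega'\subset\omega$ of length at least $\delta/3$, arranged so that the two complementary pieces in $\omega\setminus\omega'$ each have length at least $\delta/3$. The uniform density statement applied at the midpoint of $\omega'$ produces a point $y\in\omega'$ with $f^{t_{0}}(y)=c^{*}$ for some $t_{0}\le t^{*}$; by choosing $\eta$ suitably small, $y$ sits at distance at least $\eta$ from each endpoint of $\omega'$. The second clause of (H3) guarantees that the orbit $y, f(y),\ldots, f^{t_{0}-1}(y)$ is disjoint from $\mathcal C$, so $f^{t_{0}}$ is a genuine $C^{2}$ diffeomorphism on a neighbourhood of $y$. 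Because $\bigcup_{n\le t^{*}} f^{-n}(c^{*})\cap \hat J$ is a finite set disjoint from $\mathcal C$, the derivatives $|(f^{t_{0}})'(y)|$ take only finitely many values, hence are uniformly bounded above and below by positive constants $M,m$ that depend only on $t^{*}$ and $f$.

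I then define $\tilde\omega$ to be the connected component of $(f^{t_{0}})^{-1}(\Delta^{*})$ containing $y$, with $\delta^{*}$ fixed small enough that $|\tilde\omega|\le |\Delta^{*}|/m<\eta$ for every admissible $y$. This inclusion forces $\tilde\omega\subset\omega'$, whence each component of $\omega\setminus\tilde\omega$ absorbs one of the trimmed end pieces and so has length at least $\delta/3$, and by construction $f^{t_{0}}:\tilde\omega\to\Delta^{*}$ is a diffeomorphism with $t_{0}\le t^{*}$. The remaining length comparison in the second bullet follows at once from the uniform derivative bounds: for any distinguished subinterval $\tilde\omega^{*}\subset\tilde\omega$ (in particular the pullback of a designated piece of $\Delta^{*}$) one gets $|\tilde\omega^{*}|/|\tilde\omega|\ge\xi$, with $\xi$ controlled by the ratio $m/M$.

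The main obstacle I foresee is precisely the passage from the pointwise density in (H3) to uniform density at bounded order: without compactness of $\overline{\hat J}$ the required order $t_{0}$ could blow up along some sequence of points and the construction would collapse. A secondary but essential point is that $\delta^{*}$ must be fixed once and for all, independently of $\omega$; this is made possible by the finiteness of the preimage set of order at most $t^{*}$, which via the second clause of (H3) stays disjoint from $\mathcal C$ and so delivers the uniform derivative bounds used above.
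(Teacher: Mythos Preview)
Your proposal is correct and follows essentially the same approach as the paper: both arguments use density (H3) together with compactness of $\hat J$ to obtain a uniform order $t^{*}$ such that the preimages of $c^{*}$ of order at most $t^{*}$ are $\varepsilon$-dense for a small $\varepsilon$ depending on $\delta$, and then invoke the second clause of (H3) (preimages avoid $\mathcal C$) together with the finiteness of this preimage set to get uniform derivative bounds, which in turn allow $\delta^{*}$ to be chosen so small that the pullback of $\Delta^{*}$ sits inside the central third of $\omega$. Your write-up is in fact slightly more explicit than the paper's about the compactness step and the uniform derivative bounds; the only cosmetic point is that the second bullet in the statement appears to be a typo for $|\tilde\omega|\ge\xi|\omega|$ (this is how it is used later, see the proof of Lemma~\ref{manyescapes}), and your derivative-ratio argument delivers precisely that.
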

\begin{proof}
By assumption the preimages of $c^*$ are dense in
$\hat{J}$ and do not contain any other critical point. Therefore for
any \( \varepsilon>0 \) there exists a \( t^{*} \) such
that the set of preimages
\(
\{f^{-t}(c^*): t\leq t^{*}\}
\) of the critical point \( c^{*} \) is
\emph{i)} \( \varepsilon \) dense in \( \hat J \), and
\emph{ii)}  uniformly bounded away from
$\mathcal{C}$.
Using the \( \varepsilon  \)-density and taking \( \varepsilon \)
small enough (depending on \( \delta \) but not on \( \omega \))
we can guarantee that one of these preimages belongs to \(
\omega \) and in fact we can ensure that it lies arbitrarily close to
the center of \( \omega \). Then, using that fact that these preimages
are uniformly bounded away from \( \mathcal C \) and taking \(
\delta^{*} \) sufficiently small we can actually guarantee that a
component of
\( f^{-t_{0}}(\Delta^{*}) \) for some  \(0\leq t_{0}\leq t^{*}\)
is contained the central third of \( \omega \). Since everything
depends only on a fixed and finite number of intervals and iterations
it follows that the proportion of this preimage in \( \omega \) is
uniformly bounded below.
\end{proof}

\subsubsection*{The escape partition}
We have given the complete algorithm for the construction of the
escape partition \( \mathcal P \) of the interval \( I \). The
algorithm in itself does not show that such a partition does
always exist,
indeed it may be that intervals get chopped very frequently and in
principle it may be that intervals never reach the ``large scale'' \(
\delta \) required to escape. However we shall prove that this
algorithm not only gives rise to a partition \( \mathcal P \) of \( I \)
(mod 0) but in fact escapes occur exponentially fast in the following
sense. Let
\[
\mathcal E_{n}(\omega)=\{\omega'\subseteq\omega
\text{ which have not escaped by time } n\}
\]
Then we have the following
\begin{prop}\label{basetail}
There exist constants $C_{1}> 0$ depending on \( \delta \) and \(
\delta^{*} \) and
$\gamma_1 > 0$ independent of \( \delta \) and \( \delta^{*} \)
such that for any
\( \omega\subset \hat J \) with \( \omega=\Delta^{*} \) or
\( \delta\geq |\omega|\geq \delta/3 \) we have
\begin{equation*}
    |\mathcal E_{n}(\omega)| \leq C_1 e^{-\gamma_1 n}|\omega|.
\end{equation*}
\end{prop}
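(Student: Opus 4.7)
The plan is to combine an \emph{expansion--depth tradeoff} on non-escaped pieces with an exponential-moment bound on the accumulated essential-return depth. For any subinterval $\omega'\subset\omega$ that has not escaped by time $n$, denote by $\nu_{1}<\cdots<\nu_{s}\le n$ its essential and inessential return times, with depths $r_{1},\dots,r_{s}$, and set $R(\omega')=\sum_{i}r_{i}$.

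First I would establish that non-escape forces $R(\omega')\ge\alpha_{0}\,n$. Combining (H1), which supplies $e^{\lambda}$ per free iterate outside $\Delta$; the binding recovery of Section~\ref{sec_bind}, which shows that a critical return of depth $r$ together with its binding period $p(r)$ produces net derivative growth $\ge e^{\beta r}$ for some $\beta>0$ (with singular returns handled trivially since $\ell_{c}<1$ makes $|f'|$ large at the return); and the distortion control of Section~\ref{distest}; the chain rule yields, at any $x\in\omega'$,
\[
|(f^{n})'(x)|\;\ge\;C\,e^{\lambda_{0}n-\kappa R(\omega')},
\]
with constants $\lambda_{0},\kappa>0$ depending only on $\lambda,\Lambda,\ell,\ell^{*}$. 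Non-escape gives $|f^{j}(\omega')|<\delta$ at every free $j\le n$, so bounded distortion provides the matching upper bound $|(f^{n})'(x)|\le C'\delta/|\omega'|$. Combining the two inequalities and using $|\omega'|\le|\omega|\le 2\delta^{*}$ or $|\omega|\le\delta$ produces the claimed linear lower bound $R(\omega')\ge\alpha_{0}n$, provided $\alpha,\delta$ are small relative to $\lambda,\Lambda$ as globally assumed.

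Second I would control the generating sum
\[
Z_{n}(\theta)\;:=\;\sum_{\omega'\in\mathcal P_{n}}e^{\theta R(\omega')}\,|\omega'|
\]
along the algorithm, showing $Z_{n}(\theta)\le C_{2}|\omega|$ for some $\theta>0$ and $C_{2}=C_{2}(\delta,\delta^{*})$. The crucial input is the subdivision rule at an essential return: the parent $\tilde\omega$ is split into pieces $\omega_{r,j}$ with $I_{r,j}\subset f^{\nu}(\omega_{r,j})\subset\hat I_{r,j}$, so bounded distortion together with $|\hat I_{r,j}|\asymp e^{-r}/r^{2}$ gives $|\omega_{r,j}|/|\tilde\omega|\le C|\hat I_{r,j}|/|f^{\nu}(\tilde\omega)|$ for each realised $(r,j)$. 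Summing over $(r,j)$ produces a per-return factor bounded by $C\sum_{r\ge r_{\delta}+1}e^{(\theta-1)r}<1$ for $\theta<1$ sufficiently small, using geometric decay together with the elementary observation that a small image $f^{\nu}(\tilde\omega)$ can only meet a bounded number of cells of $\mathcal I$. Inessential returns do not refine $\mathcal P_{n}$ and contribute only through $R$; their binding periods are charged to the expansion bookkeeping of Step~1.

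Finally, Markov's inequality combined with $\mathcal E_{n}(\omega)\subset\{R(\omega')\ge\alpha_{0}n\}$ gives
\[
|\mathcal E_{n}(\omega)|\;\le\;e^{-\theta\alpha_{0}n}\,Z_{n}(\theta)\;\le\;C_{2}\,e^{-\theta\alpha_{0}n}\,|\omega|,
\]
which is the desired bound with $\gamma_{1}=\theta\alpha_{0}$ depending only on the global rates and $C_{1}=C_{2}$ absorbing $\delta,\delta^{*}$. The principal obstacle lies in Step~1: the binding recovery factor $e^{\beta r}$ must dominate both the tangential derivative loss $e^{-(\ell-1)r}$ at critical returns and the $p(r)\sim r/\Lambda$ ``wasted'' iterates of the binding period during which the expansion of (H1) is not directly accessible, which is precisely what forces the smallness assumption on $\alpha$. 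A secondary difficulty is certifying that the per-return factor in Step~2 remains bounded when $|f^{\nu}(\tilde\omega)|$ is small: this requires checking that a small parent image can only produce a bounded number of $(r,j)$-pieces concentrated in a single or a few cells of the critical partition, so that no divergent contribution arises.
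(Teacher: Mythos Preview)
Your overall architecture---first show that non-escape by time $n$ forces $R\gtrsim n$, then control the measure of $\{R\gtrsim n\}$---is exactly the paper's. But both steps have genuine gaps in execution.

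\textbf{Step 1 is logically broken.} From your two derivative bounds you can only extract $|\omega'|\le C''e^{-\lambda_0 n+\kappa R}$. To deduce $R\ge\alpha_0 n$ from this you would need a \emph{lower} bound on $|\omega'|$; the inequality $|\omega'|\le|\omega|$ you invoke points the wrong way and gives nothing. Indeed $|\omega'|$ can be arbitrarily small (it is a partition element produced by many subdivisions). The missing idea is that at each \emph{essential} return $\nu_i$ of depth $r_i$, the image $f^{\nu_i}(\hat\omega')$ \emph{contains} a full partition cell $I_{r_i,j}$, so $|f^{\nu_i}(\hat\omega')|\ge e^{-r_i}/r_i^2$. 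This lower bound, combined with the growth $|(f^{\nu_{i+1}-\nu_i})'|\ge e^{\hat\lambda(\nu_{i+1}-\nu_i)}$ and the non-escape upper bound $|f^{\nu_{i+1}}(\hat\omega')|\le\delta$, forces $\nu_{i+1}-\nu_i\le\tilde\theta r_i$; summing over $i$ gives $n\le n_\delta+\tilde\theta R_{\mathrm{ess}}$. That is precisely the content of the paper's Lemma~\ref{escape}, and it hinges on the lower bound on image size at essential returns, not on any bound on $|\omega'|$ itself.

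\textbf{Step 2's per-return factor is not $<1$.} You have silently dropped the denominator $|f^\nu(\tilde\omega)|$ when passing to $C\sum_r e^{(\theta-1)r}$. Since the image at an essential return can be as small as $\sim e^{-r_{\min}}$, the actual factor is of order $e^{\theta r_{\min}}\ge e^{\theta r_\delta}>1$, so the inductive recursion does not contract. The conclusion $Z_n(\theta)\le C_2|\omega|$ is in fact true, but establishing it requires (i) a metric bound $|\omega'|\lesssim e^{-\theta' R}$ coming from the binding expansion $|(f^{\nu_s})'|\ge\kappa\,e^{\theta'(r_1+\cdots+r_{s-1})}$, and (ii) a combinatorial count $\#\{\omega':R(\omega')=R\}\le e^{\tilde\eta R}$ with $\tilde\eta$ small for small $\delta$---these are the paper's Lemmas~\ref{metestlem} and~\ref{combestlem}. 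Finally, your inclusion of \emph{inessential} returns in $R$ creates a further difficulty: inessential returns add to $R$ without subdividing $\omega'$, so they inflate $Z_n(\theta)$ with nothing in the recursion to absorb them. The paper works with essential returns only.
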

Proposition \ref{basetail} will be proved in Section \ref{escest}.

\subsection{The induced Markov map}
\label{inducedMark}
We are now ready to describe the algorithm for the construction of the
final Markov induced map.
We fix \( \Delta^{*} \) as in Lemma \ref{retpart} and aim to obtain a
map  \( F: \Delta^{*}\to\Delta^{*}\) with
a partition $\mathcal{Q}$ and a return time function
$T:\mathcal{Q}\to\mathbb{N}$ constant on elements of $\mathcal{Q}$
such that \( F(\omega)=f^{T(\omega)}(\omega)=\Delta^{*} \) for every \(
\omega \in \mathcal Q \). The fact that this construction actually
yields such a partition with the required properties (distortion
bounds, tail estimates) will be verified in the following sections.

\subsubsection*{First escape partition}
First of all, starting with \( \Delta^{*} \),
we construct the escape time partition \( \mathcal P(\Delta^{*}) \) as
described in Section \ref{ss:escape times}.

\subsubsection*{Dealing with escaping components}.
Let \( \omega \in \mathcal P(\Delta^{*}) \) with some escape time \(
E(\omega) = n \).
By Lemma \ref{retpart}, we can subdivide its image
$\omega_{n}=f^{n}(\omega)$ into three pieces
\[
\omega_{n}= \omega_{n}^{L}\cup \omega_{n}^{*}\cup \omega_{n}^{R}
\]
with
\[
\omega_{n+t_{0}}^{*}=f^{n+t_{0}}(\omega)=f^{t_0}(\omega_{n}^{*})=\Delta^{*}
\]
for some $t_{0} \leq t^{*}$, and
\[
|\omega_{n}^{L}|,|\omega_{n}^{R}|>\delta/3.
\]
The interval \( \omega^{*} \) becomes, by definition,  an
element of $\mathcal{Q}$ and we define
$$
T(\omega^{*})=E(\omega)+t_0(\o)=n+t_0(\o).
$$

\subsubsection*{Iterating the argument}
The components
$\omega_{n}^{L}, \omega_{n}^{R}$ are treated as new starting
intervals and we repeat the algorithm: we construct an escape
partition of each of $\omega_{n}^{L}, \omega_{n}^{R}$  and then some
proportion of each escaping component returns to \( \Delta^{*} \)
within some uniformly bounded number of iterates.  Notice that if
either \(|\omega_{n}^{L}|\geq \delta\)  or \( |\omega_{n}^{R}|\geq
\delta \) we can skip the construction of the escape partition (or, in
some sense, this step is trivial) and immediately apply Lemma \ref{retpart}
to find a subinterval
which returns to \( \Delta^{*} \) after some finite number of iterates
bounded by \( t^{*} \). As far as the construction is concerned we
 will only apply the escape partition algorithm to
intervals \( I \) of length between \( \delta/3  \) and \( \delta \).
This explains the assumptions of Proposition \ref{basetail}.

\subsubsection*{The tail of the return times}
For \( n\geq 1 \) we let
\[
\mathcal{Q}^{(n)}=\{\omega: T(\omega)>n\}
 \]
 denote the set of intervals which arise from the construction just
 described, and which have not yet had a full return at
 time \( n\).
In Section \ref{ret_time} we will prove the following
\begin{prop}\label{main_tail}
    There exist constants $C_{2}> 0$ depending on \( \delta \) and \(
    \delta^{*} \) and
    $\gamma_2 > 0$ independent of \( \delta \) and \( \delta^{*} \)
    such that for all $n\geq 1$:
\begin{equation*}
|\mathcal{Q}^{(n)}|\leq C_{2}e^{-\gamma_{2} n}|\Delta^{*}|.
\end{equation*}
\end{prop}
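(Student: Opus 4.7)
\emph{Approach.} I would organize the recursive construction of Section \ref{inducedMark} into generations and exploit two sources of exponential decay in tandem: the escape-time tail from Proposition \ref{basetail} applied at each generation, and the geometric decrease of the surviving mass coming from Lemma \ref{retpart}. Concretely, set $\mathcal G_0=\{\Delta^*\}$; having defined $\mathcal G_k$ (each member of which is either $\Delta^*$ or an interval of image-length in $[\delta/3,\delta]$, so Proposition \ref{basetail} applies uniformly), run the escape algorithm on each $I\in\mathcal G_k$. For each escape component $\omega\in\mathcal P(I)$ with escape time $m=E(\omega)$, Lemma \ref{retpart} decomposes $f^m(\omega)=\omega_m^{L}\cup\omega_m^{*}\cup\omega_m^{R}$, placing $\omega_m^{L},\omega_m^{R}$ into $\mathcal G_{k+1}$ and assigning $T(\omega^{*})=m+t_0\leq m+t^{*}$. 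Let $S_k\subset\Delta^{*}$ denote the preimage in $\Delta^{*}$ of $\bigcup\mathcal G_k$ under the iterations of $f$ dictated by the construction. Since the escape partition carries a bounded-distortion diffeomorphism on each component (a fact established separately in Section \ref{distest}), the proportion $\xi$ from Lemma \ref{retpart} pulls back to a constant $\xi'>0$ of the same component in $\Delta^{*}$, yielding the geometric bound $|S_{k+1}|\leq(1-\xi')|S_k|$, hence $|S_k|\leq(1-\xi')^{k}|\Delta^{*}|$.

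\emph{Generating-function estimate.} A point $x\in\Delta^{*}$ with return time $T(x)>n$ either still lies in some $S_k$ having not escaped in the current generation, or sits in a $\omega^{*}$ whose combined escape-plus-delay exceeds $n$. For $\theta\in(0,\gamma_1)$, Proposition \ref{basetail} gives uniformly in $I\in\bigcup_k\mathcal G_k$ that
\[
M(\theta)\;=\;\sup_{I}\frac{1}{|I|}\sum_{m\geq 0}e^{\theta m}\bigl|\{x\in I:E(x)=m\}\bigr|\;<\;\infty,
\]
with $M(0)=1$. The tree structure of the construction, together with bounded distortion along each finite composition of escape maps, lets one estimate iteratively
\[
\int_{S_k}e^{\theta T(x)}\,dx\;\leq\;\bigl[(1-\xi')\,M(\theta)\,e^{\theta t^{*}}\bigr]^{k}|\Delta^{*}|.
\]
Since $M(0)=1$ and $\xi'>0$, there exists $\theta_0\in(0,\gamma_1)$ with $\beta:=(1-\xi')M(\theta_0)e^{\theta_0 t^{*}}<1$; summing over $k$ gives $\int_{\Delta^{*}}e^{\theta_0 T}\,dx\leq|\Delta^{*}|/(1-\beta)$, and Markov's inequality yields $|\mathcal Q^{(n)}|\leq C_2 e^{-\gamma_2 n}|\Delta^{*}|$ with $\gamma_2=\theta_0$.

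\emph{Main obstacle.} The hard part is the tree-indexed distortion bookkeeping needed to justify the displayed inequality on $\int_{S_k}e^{\theta T}\,dx$. For each finite itinerary $(\omega_1,\ldots,\omega_k)$ recording which escape components the point traverses across generations, one must compare the measure in $\Delta^{*}$ of the corresponding nested preimage with the product of conditional escape-time measures at each intermediate generation. This requires a uniform distortion bound on arbitrary finite compositions of escape maps; it reduces, via a telescoping-product argument, to the single-generation distortion bound of Section \ref{distest}, but care must be taken that the distortion constants do not blow up as one composes. A secondary point is that the return delays $t_0\leq t^{*}$ contribute $e^{\theta t^{*}}$ per generation, which is harmless after choosing $\theta_0$ small, and that Proposition \ref{basetail}'s constants are genuinely uniform over the class of admissible starting intervals, which is precisely the content of its hypothesis.
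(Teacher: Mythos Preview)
Your approach is correct and genuinely different from the paper's. The paper proves Proposition~\ref{main_tail} by splitting $\mathcal Q^{(n)}$ according to the number $s$ of escapes before time $n$: for $s\leq\zeta n$ (few escapes) it iterates Proposition~\ref{basetail} across generations, picking up a factor $(C_1\mathcal D_\delta/\delta)^s$ and a combinatorial count $N_{n,s}$ of itineraries, all absorbed by $e^{-\gamma_1 n}$ when $\zeta$ is small; for $s>\zeta n$ (many escapes) it uses only the geometric loss $(1-\xi/\mathcal D_\delta)^s$ from Lemma~\ref{retpart}. Your moment-generating-function argument fuses these two mechanisms into a single renewal inequality and avoids both the case split and the sequence-counting, which is cleaner. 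The paper's route is more elementary and makes explicit which of the two decay sources is doing the work in each regime.

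One caveat on your distortion discussion. You write that the composite distortion ``reduces, via a telescoping-product argument, to the single-generation distortion bound''; but a genuine telescoping product would yield $\mathcal D_\delta^k$, and then your recursion constant becomes $\mathcal D_\delta(1-\xi')M(\theta)e^{\theta t^*}$, which need not be $<1$ at $\theta=0$ since $\mathcal D_\delta(1-\xi/\mathcal D_\delta)=\mathcal D_\delta-\xi$ can exceed $1$. The point that actually saves you (and which the paper also uses, in its Lemma~\ref{manyescapes}) is that Proposition~\ref{escapedist} applies \emph{directly} to the full composition $f^{E_k}$ restricted to each generation-$k$ escape component in $\Delta^*$, giving a single uniform constant $\mathcal D_\delta$ independent of $k$: the nested component has, at every free return along its entire history, image contained in three adjacent $I_{r,j}$'s, so the hypothesis of Proposition~\ref{escapedist} is met globally. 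With that in hand, your pulled-back tail bound reads $|\{m_{k+1}>t\}|\leq \mathcal D_\delta C_1 e^{-\gamma_1 t}|\omega^{(k)}_{L/R}|$, whence $M'(\theta)=1+O(\theta)$ with $M'(0)=1$, and the recursion constant $(1-\xi')M'(\theta)e^{\theta t^*}$ is indeed $<1$ for small $\theta>0$. So the argument closes, but not by telescoping per-generation distortion---rather by invoking the full-orbit distortion bound once per generation.
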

This gives the required tail estimate and implies, in particular, that
Lebesgue almost every point of \( \Delta^{*} \) belongs to an interval
of the partition \( \mathcal Q \).

\section{The binding period}
\label{sec_bind}
In this short section we obtain some relatively simple but crucial
estimates related to the binding period defined in
Section~\ref{ss:binding}. In particular this shows that the binding
period defines an induced map which is
uniformly expanding on each of the
countable intervals of the critical partition \( \mathcal I \) of \(\Delta \).

\begin{lemma}\label{bindexp}
    There exists constants $\theta, \hat\theta>0$ independent of \( \delta \)
    such that for all points \( x\in \hat
    I_{r}, \) and $p=p(r) \geq 0$ we have
 \[
 |(f^{p+1})'(x)|\geq \frac{1}{\kappa} e^{\theta r}\geq
 \frac{1}{\kappa}e^{\hat\theta (p+1)}
\]
where \( \kappa>0 \) is the constant in the expansivity condition
 (H1).
\end{lemma}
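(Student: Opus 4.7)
\medskip

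My plan is to split the argument on whether \( c \in \mathcal C_s \) or \( c \in \mathcal C_c \). The singular case is essentially a one-line calculation: there \( p(r) = 0 \), so the lemma asks for a lower bound on \( |f'(x)| \) with \( x \in \hat I_r \), which follows directly from \( |f'(x)| \geq C^{-1}|x-c|^{\ell_c - 1} \) in (\ref{defnCmap2}) together with \( |x-c| \leq e^{-r+2} \), using that \( \ell_c < 1 \) for singular points. Choosing any \( \theta \leq (1-\ell^{*})/2 \) handles the first inequality; the second is automatic since \( p+1 = 1 \) while \( r \geq r_{\delta} + 1 \) is large once \( \delta \) is small.

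The bulk of the proof is the critical case (\( c \in \mathcal C_c \)), which I would open with the chain rule decomposition \( (f^{p+1})'(x) = f'(x)\cdot (f^p)'(f(x)) \). The centerpiece is a bounded distortion comparison \( (f^p)'(f(x)) \asymp (f^p)'(f(c)) \) along the shadowed segment joining \( f(x) \) to \( f(c) \), obtained via the standard telescoping of \( |f''|/|f'| \)-contributions over the orbit. The binding condition bounds the segment length at step \( j \) by \( \delta e^{-2\alpha j} \), while (H2) keeps \( c_{j+1} \) at distance \( \geq \delta e^{-\alpha j} \) from \( \mathcal C \), and (\ref{defnCmap2})--(\ref{defnCmap3}) translate these into geometric-series summable contributions as long as \( \alpha \) is small. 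Combined with \( (f^p)'(f(c)) \geq e^{\Lambda p} \) from (H2), this gives \( (f^p)'(f(x)) \gtrsim e^{\Lambda p} \).

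To pin down the constants cleanly I would then use the compact identity
\[
|(f^{p+1})'(x)| \asymp \frac{|f^{p+1}(x) - f^{p+1}(c)|}{|x - c|}
\]
which follows from the distortion estimate together with \( |f(x)-f(c)| \asymp |x-c|^{\ell_c} \) and \( |f'(x)| \asymp |x-c|^{\ell_c - 1} \). Two complementary estimates then close the argument: (i) the binding upper bound \( |f^{p+1}(x)-f^{p+1}(c)| \leq \delta e^{-2\alpha p} \), which together with \( (f^p)'(f(c))\geq e^{\Lambda p} \) forces \( p(r) \leq \ell_c r /(\Lambda + 2\alpha) + O(1) \); and (ii) the failure of the binding condition at \( p+1 \), which after one backward MVT step with \( |f'| \) near \( c_{p+1} \) controlled by (H2) and (\ref{defnCmap2}) yields \( |f^{p+1}(x)-f^{p+1}(c)| \gtrsim \delta e^{-3\alpha(p+1)} \). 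Putting these together with \( |x-c| \leq e^{-r+2} \) gives \( |(f^{p+1})'(x)| \gtrsim \delta e^{r - 3\alpha(p+1)} \), which by (i) is at least \( (1/\kappa) e^{\theta r} \) for some \( \theta > 0 \) provided \( \alpha \) is small relative to \( \Lambda \) (the standing hypothesis). The second inequality is immediate from (i) by setting, say, \( \hat\theta = \theta(\Lambda+2\alpha)/(\ell+1) \).

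The main obstacle I expect is the distortion step when the orbit of \( c \in \mathcal C_c \) shadows a singular point. There the derivative estimate (\ref{defnCmap2}) permits \( |f'| \) near \( c_k \) to be as large as \( \sim e^{\alpha k(1-\ell')} \) for \( \ell'<1 \), so both the telescoping distortion sum and the one-step estimate used in (ii) must absorb this mild exponential growth. This is precisely the place where the standing assumption that \( \alpha \) be small relative to \( \Lambda \) is essential, and the constants \( \theta, \hat\theta \) end up depending on \( \Lambda, \ell, \ell^{*} \) and the margin \( \alpha \ll \Lambda \), but not on \( \delta \).
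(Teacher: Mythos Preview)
Your proposal is correct and follows essentially the same route as the paper: split into the singular and critical cases, in the critical case establish bounded distortion during binding, derive the upper bound $p\lesssim \ell_c r/\Lambda$ from the binding inequality combined with $(f^{p})'(c_1)\geq e^{\Lambda p}$, and then extract the derivative lower bound from a lower estimate on the end-of-binding separation together with $|x-c|\leq e^{-r+2}$. The only cosmetic difference is in that last step: the paper asserts $|x_{p+1}-c_{p+1}|\geq \delta e^{-2\alpha p}$ directly, whereas you obtain the analogous bound by pulling back one step from the failure of binding at time $p+1$; both yield $\theta = 1 - O(\alpha\ell/\Lambda)$ and the same $\hat\theta$ via $r\gtrsim \Lambda p/\ell$.
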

\begin{proof}
We consider the singular region and the critical region separately and
then just take the minimum between the \( \theta \)'s which we obtain
in the two cases.

\subsubsection*{Estimates near \( \mathcal C_{s} \)}
This case is essentially trivial and follows immediately from the
structure of the map near the singular points. If
$x\in\hat{I}_r\subset\Delta_{c}$ with $c\in\mathcal{C}_s,$ then
$|f'(x)|\geq e^{(1-\ell_s)(r-1)},$ and  essentially any positive $
\theta <(1-\ell_s)$ will work. Since in this case \( p=0 \) the
second inequality follows as well, by taking, for example \(
\hat\theta = \theta\).

\subsubsection*{Estimates near \( \mathcal C_{c}\)}
For $x\in\hat{I}_r\subset\Delta_{c}$ with $c\in\mathcal{C}_c$ we claim
first of all that
\begin{equation}\label{bindexpeq}
 |(f^{p+1})'(x)|
\geq \frac{1}{\kappa} e^{\theta_{c}r}
\quad\text{with}\quad
\theta_{c}= 1-\frac{5\alpha\ell_{c}}{\Lambda} > 0.
\end{equation}
 Notice that \( \alpha \) is ``sufficiently small'' by assumption, we
  require here \( \alpha < \Lambda/5\ell_{c} \) for all \(
 c\in\mathcal C_{c} \). Assuming \eqref{bindexpeq}
 we can then choose
 \(
 \theta=\min\{\theta_{c}: c\in\mathcal{C}_{c}\}
 \)
 to get the first inequality in the statement of
 the lemma. We proceed to show
 \eqref{bindexpeq} and return to the second inequality in the
 statement at the end of the proof.

\subsubsection*{Bounded distortion}
First of all, by a standard argument such as that in \cite{LuzTuc99}
there is a constant
$\mathcal D_{1} $, independent of $r$ and $\delta$, such that for
all $x_1, y_1 \in f(\hat I_{r})$ and $1\leq k \leq p$,
\begin{equation}\label{binddist}
\left| \frac{(f^{k})'(x_1)}{(f^{k})'(y_1)} \right| \leq \mathcal
D_{1}.
\end{equation}

\subsubsection*{Upper bound on \( p \)}
By the definition of \( p \) we have
\( \delta e^{-2\alpha (p-1)} \geq |x_{p}-c_{p}|  \)
and, using the Mean Value Theorem and \eqref{defnCmap2}
we have
\[
\delta e^{-2\alpha (p-1)} \geq |x_{p}-c_{p}| \geq \mathcal D_1^{-1}
|(f^{p-1})'(c_{1})| \ |x_{1}-c_{1}|
 \geq  C^{-1}\mathcal D_1^{-1} e^{\Lambda (p-1)}e^{-\ell_c r}.
\]
Thus $\delta
e^{-2\alpha p}e^{2\alpha}\geq \mathcal{D}_1^{-1} C^{-1}e^{\Lambda
p}e^{-\Lambda}e^{-\ell r}$ and,
rearranging,
\begin{equation}\label{bindlen}
p\leq \frac{\log\mathcal{D}_1+\log\delta+2\alpha+\Lambda+\ell_c r +
\log C^{-1}} {\Lambda+2\alpha} \leq \frac{2\ell_{c} r}{\Lambda} \leq
\frac{2\ell r}{\Lambda},
\end{equation}
as long as we choose $\delta$ so that $r_\delta$ is sufficiently
large in comparison to the other constants, none of which depend on
\( \delta\).

\subsubsection*{Derivative estimates in terms of the return depth}
Finally, to prove \eqref{bindexpeq}, we use once again
the definition of binding, the Mean Value Theorem, and
\eqref{defnCmap1}, to get
\[
C \mathcal D_1 e^{-\ell_{c}r} |(f^p)'(x_{1})| \geq \mathcal D_1
|x_{1}-c_{1}|\ |(f^p)'(x_{1})| \geq |x_{p+1}-c_{p+1}| \geq \delta
e^{-2\alpha p}
\]
Rearranging to get a lower bound for \( |(f^p)'(x_{1})|  \), and using
 \eqref{bindlen}, we have
\begin{equation*}
|(f^{p})'(x_{1})| \geq
 C^{-1} \mathcal D_1^{-1}\delta e^{\ell_{c}r}e^{-2\alpha p}
\geq C^{-1}\mathcal D_1^{-1} \delta
e^{(\ell_{c}-\frac{4\alpha\ell_{c}}{\Lambda}) r}.
\end{equation*}
Since \( x\in \hat I_{r}\) we have \( |f'(x)|\geq
C^{-1}|x-c|^{\ell_{c}-1}\geq C^{-1}e^{-(r+2) (\ell_{c}-1)}  \) and
therefore:
\begin{align*}
|(f^{p+1})'(x)| &= |(f^{p})'(x_{1})| \cdot |f'(x)|
\\ &\geq C^{-2}e^{-2(\ell_{c}-1)} \mathcal D_1^{-1}
e^{(1-\frac{4\alpha}{\Lambda}) \ell_{c} r}e^{-r(\ell_{c}-1)}
\\ &\geq
C^{-2}e^{-2(\ell_{c}-1)} \mathcal D_1^{-1}
e^{(1-\frac{4\alpha\ell_{c}} {\Lambda}) r}
\\ &=
C^{-2}e^{-2(\ell_{c}-1)} \mathcal D_1^{-1}
e^{\frac{\alpha\ell_{c}}{\Lambda}r}
e^{(1-\frac{5\alpha\ell_{c}} {\Lambda}) r}.
\end{align*}
It therefore only remains to show that
\( C^{-2}e^{-2(\ell_{c}-1)} \mathcal D_1^{-1}
e^{\frac{\alpha\ell_{c}}{\Lambda}r} \geq 1/\kappa \) for any \( r \geq
r_{\delta} \) and any \( \ell_{c} \).
This can clearly be arranged by taking \( \delta \) sufficiently small
(and thus \( r_{\delta} \) sufficiently large) since \( \ell_{c}\geq 1 \)
and the other constants do not depend on \( \delta \).

\subsubsection*{Derivative estimates in terms of the length of the
binding period} Finally, we prove the second inequality in the
statement of the lemma. By \eqref{bindlen} we have \( r\geq \Lambda
p/2\ell \) and therefore
\[
e^{\theta r}\geq e^{\frac{\theta \Lambda }{2\ell}p}.
\]
Since the minimum binding period can be taken large by taking \(
\delta \) small this clearly implies the statement for some \(
\hat\theta \) between \( 0 \) and \( \theta\Lambda/2\ell \).
\end{proof}

\section{Distortion estimates}
\label{distest}

In this section we show that our construction yields intervals for
which some uniform distortion bounds hold. Let \( \omega\subset \hat
J \) be an arbitrary interval, \( n\geq 1 \) a positive integer such
that \( \omega \) has a sequence \( t_{0},\ldots, t_{q} \leq n \) of
\emph{free} returns to \( \Delta \) (with respective return depth
sequence \( r_{t_0},\ldots,r_{t_q} \))
followed by corresponding
binding periods \( [t_{m}+1, t_{m}+p_{m}] \),  as described above.
In particular, for \( m=1,\ldots, q \), the interval \(
\omega_{t_{m}} \) is contained in the union of three adjacent
elements of the form \( I_{r,j} \) of the critical partition \(
\mathcal I \) of \( \Delta \).

\begin{prop}\label{escapedist} There exist constants \( \mathcal
D_{\delta} \) and \( \tilde{\mathcal D}_{\delta} \) depending on \(
\delta \), and $\mathcal D$ and \( \tilde{\mathcal D} \) independent
of  \( \delta \), such that for every interval \( \omega \) and
integer \( n \) as described in the previous paragraph, for every \(
k\leq n \) and  \( x,y\in\omega \) we have
\begin{equation*}
\biggl|\frac{(f^{k})'(x)}{(f^{k})'(y)}\biggr|\leq \mathcal
D_{\delta} \quad \text{ and }\quad
\biggl|\frac{(f^{k})'(x)}{(f^{k})'(y)}-1\biggr|\leq
 \frac{\tilde{\mathcal D}_{\delta}}{|\omega_{k}|}|f^{k}(x)-f^{k}(y)|.
\end{equation*}
Moreover, the constants \( \mathcal D_{\delta} \) and \(
\tilde{\mathcal D}_{\delta} \) can be replaced by $\mathcal D$ and
\( \tilde{\mathcal D} \) under the following constraints:
\begin{enumerate}
    \item
    either we allow every
\( x,y\in \omega \) but restrict to values of  \( k\leq t_{q}+p_{q}
\);
\item
or we allow all \( k\leq n \), in particular,  \( t_{q}+p_{q}\leq k
\leq n\) but restrict to \( x,y\in\tilde\omega \)
     where \( \tilde \omega \subset \omega \) is such that
 \( \tilde\omega_{j}\cap\Delta = \emptyset \) for
 \( k>j\geq t_{m}+p_{m} \),
and  \( \tilde\omega_{k}\subset\Delta \); in this case we replace \(
\omega_{k} \) by \( \tilde\omega_{k} \) in the second inequality.

\end{enumerate}
\end{prop}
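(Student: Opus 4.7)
My plan is to use the classical logarithmic derivative decomposition
\[
\log\left|\frac{(f^{k})'(x)}{(f^{k})'(y)}\right| \leq \sum_{j=0}^{k-1}\frac{|f'(x_{j})-f'(y_{j})|}{|f'(y_{j})|},
\]
together with the non-degeneracy estimates \eqref{defnCmap2}--\eqref{defnCmap3}, which combine to yield $|f''(z)|/|f'(z)| \leq C|z-c|^{-1}$ for $z$ near each $c \in \mathcal{C}$. Each summand is then bounded by $C|x_{j}-y_{j}|/\DD(\omega_{j})$, and so both distortion estimates reduce to a uniform control of $\sum_{j}|\omega_{j}|/\DD(\omega_{j})$, after a standard bootstrap which replaces $|x_{j}-y_{j}|$ by $\mathcal{D}(|\omega_{j}|/|\omega_{k}|)|x_{k}-y_{k}|$ at the cost of a factor depending on the distortion constant itself.

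To bound $\sum_{j}|\omega_{j}|/\DD(\omega_{j})$, I would split iterates according to the combinatorics: (a) free iterates with $\omega_{j}\cap\Delta=\emptyset$; (b) the return iterates $j=t_{m}$; and (c) binding iterates $j\in[t_{m}+1,t_{m}+p_{m}]$. Type (b) is immediate: since $\omega_{t_{m}}$ lies in three adjacent partition elements $I_{r_{m},j}$ of $\mathcal{I}$, we get $|\omega_{t_{m}}|/\DD(\omega_{t_{m}})\leq C/r_{m}^{2}$, summable in $r\geq r_{\delta}$ uniformly in $\delta$. Type (c) is handled by combining the binding condition $|\omega_{j}-c_{i}|\leq\delta e^{-2\alpha i}$ with (H2)'s bound $\DD(c_{i})\geq\delta e^{-\alpha i}$ to get $|\omega_{j}|/\DD(\omega_{j})\leq Ce^{-\alpha i}$ with $i=j-t_{m}-1$; each binding period contributes $O(1)$, and across different binding periods these contributions telescope against the geometric growth of $|\omega_{j}|$ provided by Lemma~\ref{bindexp}.

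For type (a) I would further distinguish (a1) segments between consecutive binding periods, which terminate with $\omega_{t_{m+1}}\cap\Delta\neq\emptyset$, and (a2) the tail $j\in[t_{q}+p_{q},k]$. In (a1) the second clause of (H1) applies at the endpoint, giving the $\delta$-independent expansion $|(f^{n})'|\geq\kappa e^{\lambda n}$; combining this with the tight bound $|\omega_{t_{m+1}}|\leq C\delta/r_{m+1}^{2}$, a geometric sum yields a $\delta$-independent contribution which is again summable in $m$. In (a2) the general form of (H1) only supplies $|(f^{n})'|\geq\kappa\delta e^{\lambda n}$, and it is precisely this step which forces the $\delta$-dependent constants $\mathcal{D}_{\delta}$ and $\tilde{\mathcal{D}}_{\delta}$ in the general statement. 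In case (1), the hypothesis $k\leq t_{q}+p_{q}$ removes (a2) entirely, so $\delta$-independent $\mathcal{D}$ and $\tilde{\mathcal{D}}$ suffice. In case (2), the extra condition $\tilde\omega_{k}\subset\Delta$ upgrades (H1) on the tail to its stronger form, again removing the $\delta$-dependence, with $\tilde\omega_{k}$ replacing $\omega_{k}$ in the second inequality as stated.

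The main technical obstacle I anticipate is the bookkeeping in the bootstrap combined with the binding contributions, namely ensuring that the $O(1)$ per binding period does not aggregate into a factor $O(q)$ as the number of returns grows. The key observation that resolves this is that $p_{m}$ is proportional to $r_{m}$ by \eqref{bindlen} while the block expansion is $e^{\theta r_{m}}$ by Lemma~\ref{bindexp}, so the sequence $\{|\omega_{j}|\}$ grows geometrically across successive return-binding blocks and the full sum effectively telescopes into a bounded multiple of $|\omega_{k}|/\DD(\omega_{k})$; paired with the bootstrap factor, this yields the clean scaling $\tilde{\mathcal{D}}|f^{k}(x)-f^{k}(y)|/|\omega_{k}|$ required by the second inequality.
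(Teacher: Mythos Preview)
Your overall strategy mirrors the paper's: the same logarithmic-derivative reduction to $\mathcal S=\sum_j|\omega_j|/\DD(\omega_j)$, the same free/return/binding decomposition, the same identification of the final free tail (a2) as the unique source of $\delta$-dependence, and the same bootstrap for the Lipschitz inequality.

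There is, however, a genuine gap in your summation over returns. Your binding estimate $|\omega_j|/\DD(\omega_j)\le Ce^{-\alpha i}$ bounds $|\omega_j|$ only by the diameter $2\delta e^{-2\alpha i}$ of the binding tube, ignoring that $|\omega_{t_m}|$ may be far smaller; summed over one binding period this gives a fixed $O(1/\alpha)$ with no small prefactor, and over $q$ binding periods it gives $O(q)$, which is unbounded. Geometric growth of $|\omega_j|$ across blocks does not make these constants telescope --- each block contributes the same $O(1)$ regardless of the size of $|\omega_{t_m}|$. The paper instead sharpens the binding bound to
\[
\sum_{j=t_m}^{t_m+p_m}\frac{|\omega_j|}{\DD(\omega_j)}\ \lesssim\ \frac{|\omega_{t_m}|}{\DD(\omega_{t_m})}\ \lesssim\ |\omega_{t_m}|\,e^{r_{t_m}},
\]
by factoring $|\omega_j|/\DD(\omega_j)$ as a term $\lesssim|\omega_{t_m}|/\DD(\omega_{t_m})$ (via the bounded distortion \eqref{binddist} and the order-$\ell_c$ structure near $c$) times $e^{-\alpha(j-t_m)}$. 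A closely related issue is your claim that the type-(b) terms are ``summable in $r\ge r_\delta$'': the sum is over the return index $m$, not over distinct values of $r$, and the same depth can recur arbitrarily often, so $\sum_m 1/r_{t_m}^2$ is not a priori finite. The paper's resolution is to group returns by depth and use the geometric growth you cite \emph{at that stage}: for fixed $r$, the lengths $|\omega_{t_m}|$ with $r_{t_m}=r$ form a geometrically decreasing sequence dominated by the last one, which is $\le e^{-r}/r^2$, so $\sum_{m:\,r_{t_m}=r}|\omega_{t_m}|e^{r}\lesssim 1/r^2$, and then one sums over $r$. This is where the expansion from Lemma~\ref{bindexp} actually enters, not as a telescoping of the $O(1)$ binding contributions.
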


Most of the proof is devoted to proving the first inequality, then
in Section \ref{lip} we show that the second follows almost
immediately.

\subsection{Preliminary calculations}
First of all we prove the following
\begin{lemma}\label{reduction}
    There exists a constant \( \mathcal D_{2}>0 \) such that
    \[
    \log\left|
    \frac{(f^{k})'(x_0)}{(f^{k})'(y_0)}\right| \leq \mathcal D_{2}
    \sum_{j=0}^{k-1} \frac{|\omega_{j}|}{\mathfrak D (\omega_{j})}.
    \]
 \end{lemma}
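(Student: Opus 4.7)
The plan is to reduce the statement to a pointwise comparison between $|f''/f'|$ and $1/\DD$ via the standard telescoping-plus-Mean-Value-Theorem argument for multiplicative distortion. Concretely, I would begin by writing, via the chain rule,
$$\log\left|\frac{(f^{k})'(x_{0})}{(f^{k})'(y_{0})}\right| \le \sum_{j=0}^{k-1}\bigl|\log|f'(x_{j})|-\log|f'(y_{j})|\bigr|,$$
and then applying the MVT to the smooth function $\log|f'|$ on the segment between $x_{j}$ and $y_{j}$ (which lies inside $\omega_{j}$). This bounds the $j$-th summand by $\sup_{z\in\omega_{j}}|f''(z)/f'(z)|\cdot|x_{j}-y_{j}|\le\sup_{z\in\omega_{j}}|f''(z)/f'(z)|\cdot|\omega_{j}|$.

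The key ingredient is then the pointwise estimate $|f''(z)/f'(z)|\le C'/\DD(z)$ for a constant $C'$ depending only on $f$. Near any $c\in\mathcal{C}$, dividing inequality \eqref{defnCmap3} by \eqref{defnCmap2} immediately gives $|f''(z)/f'(z)|\le C^{2}|z-c|^{-1}$, regardless of whether $c$ is critical or singular (the powers $\ell_{c}-1$ and $\ell_{c}-2$ cancel to yield a universal $|z-c|^{-1}$). Away from a fixed $\delta$-neighbourhood of $\mathcal{C}$, the map $f$ is $C^{2}$ on a compact set where $|f'|$ is bounded below by (H1), so $|f''/f'|$ is uniformly bounded there, and this constant can be absorbed to obtain $|f''(z)/f'(z)|\le C'/\DD(z)$ globally.

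Combining these two steps, each summand is bounded by $C'|\omega_{j}|/\inf_{z\in\omega_{j}}\DD(z)$, and setting $\mathcal{D}_{2}$ proportional to $C'$ gives the claimed inequality, once one identifies $\inf_{z\in\omega_{j}}\DD(z)$ with (a fixed multiple of) $\DD(\omega_{j})$. This last identification is the one subtle point: the intervals $\omega_{j}$ that arise in our construction are (by the free/bound dichotomy of Section~\ref{escape_part}) either disjoint from $\Delta$ or contained in a union of at most three adjacent elements $I_{r,j}$ of the critical partition, and on such a set the infimum and supremum of $\DD$ are comparable up to a universal multiplicative constant, which is absorbed into $\mathcal{D}_{2}$.

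The main obstacle I would anticipate is \emph{not} in the proof of this lemma itself, which is essentially a bookkeeping exercise: it is that the right-hand side $\mathcal{D}_{2}\sum_{j=0}^{k-1}|\omega_{j}|/\DD(\omega_{j})$ is in general not obviously bounded. The entire purpose of the combinatorial framework of Section~\ref{escape_part} (free/bound decomposition, essential and inessential returns, the chopping procedure) combined with the binding-period expansion estimate of Lemma~\ref{bindexp} is precisely to control this sum uniformly in $k$ and in the choice of $\omega$, and it is in that subsequent control that Proposition~\ref{escapedist} will actually be proven. The present lemma is merely the first, purely analytic, reduction.
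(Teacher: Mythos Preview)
Your approach is correct and essentially the same as the paper's. The only cosmetic difference is that you apply the Mean Value Theorem directly to $\log|f'|$, yielding $|f''(z)/f'(z)|$ at a single point $z\in\omega_{j}$, whereas the paper uses $\log|1+u|\le|u|$ followed by the MVT on $f'$ itself, yielding $|f''(\xi_{j})|/|f'(y_{j})|$ at two distinct points; both routes then reduce to the same comparability of $\DD$ across $\omega_{j}$, which you correctly flag as the one place where the combinatorics of the construction is used.
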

\begin{proof} We carry out some relatively standard algebraic manipulations and
    then use the nondegeneracy of the critical set together with
    our assumptions about the itinerary of \( \omega \).

    \subsubsection*{Preliminary reductions}
We start by rewriting the expression for the distortion as follows.
For $j\geq 0$, let $x_j=f^{j}(x),\ y_j=f^{j}(y)$, and
$\omega_j=f^j(\omega)$. By the chain rule and the convexity of the
$\log$ function we have
\begin{equation}\label{bd1}
\log\left| \frac{(f^{k})'(x_0)}{(f^{k})'(y_0)}\right| =
\sum_{j=0}^{k-1}\log\left| 1+\frac{f'(x_j)-f'(y_j)}{f'(y_j)}
\right|\leq \sum_{j=0}^{k-1} \frac{|f'(x_j)-f'(y_j)|}{|f'(y_j)|}.
\end{equation}
Since $f$ is  $C^2$ outside $\mathcal{C}$, by the Mean Value Theorem
we can write \( |f'(x_j)-f'(y_j)|=|f''(\xi_{j})||x_j-y_j| \) for
some $\xi_{j}\in (x_j,y_j)\subset \omega_{j}$ and so
\begin{equation}\label{bd2}
\frac{|f'(x_j)-f'(y_j)|}{|f'(y_j)|}\leq
\frac{|f''(\xi_{j})|}{|f'(y_j)|} |\omega_{j}|.
\end{equation}

\subsubsection*{Using the nondegeneracy of the critical set}
Outside some fixed neighbourhood of the critical set \( \mathcal C
\) the ratio \( {|f''(\xi_{j})|}/{|f'(y_j)|} \) is uniformly bounded
above (and below). Inside such a neighbourhood we have
\eqref{defnCmap2} and \eqref{defnCmap3} and therefore, as long as
the distance of \( \xi_{j} \) and \( y_{j} \) to the critical point
\( c \) are comparable, we get
\begin{equation}\label{bd3}
\frac{|f''(\xi_{j})|}{|f'(y_j)|} \leq C^{2}
\frac{|\xi_{j}-c|^{\ell_{c}-2}}{|y_{j}-c|^{\ell_{c}-1}} \leq
\frac{\mathcal D_{2}}{\mathfrak D (\omega_{j})}
\end{equation}
for some constant \( \mathcal D_{2}>0 \). The distances of \(
\xi_{j} \) and \( y_{j} \) to the critical point are indeed
comparable, and thus
 \eqref{bd3}  holds in the situations
we are considering. To see this, we distinguish two cases. If \(
\mathfrak D(\omega_{j}) \geq \delta/2 \)  the distances clearly
are comparable since \( |\omega_{j}|\leq \delta \). On the other
hand, \( \mathfrak D(\omega_{j}) \leq \delta/2 \) implies  that \(
\omega_{j} \) is contained in at most three elements of the form \(
I_{r,j} \) of the critical partition \( \mathcal I \) and therefore
\( \mathfrak D(\omega_{j}) \gg |\omega_{j}| \).

Now,  substituting \eqref{bd3} into \eqref{bd2} and then into
\eqref{bd1} gives the statement in the lemma.
\end{proof}

\subsubsection*{Basic strategy}
By Lemma \ref{reduction}, we only need to get an upper bound,
independent of \( k \), \( \omega \) and \( \delta \), for the sum
\[
\mathcal S = \sum_{j=0}^{k-1} \frac{|\omega_{j}|}{\mathfrak D
(\omega_{j})}.
 \]
 This relies on two main ideas.
\begin{itemize}
    \item
    On the one hand we need at least \( \sum |\omega_{j}| \) uniformly
bounded. This depends on the fact that \( \omega_{j} \) is
 growing exponentially in size, the sequence \( |\omega_{j}| \) is a
 geometric increasing sequence with a uniformly bounded last term, and
 thus has a uniformly bounded overall sum.
 \item
 On the other hand, this bound
 is not sufficient since \( \mathfrak D(\omega_{j}) \) is not uniformly
 bounded below. We therefore need to use the additional information
 that \( \mathfrak D(\omega_{j}) \)  being small implies \( \omega_{j}
 \subset \Delta\) and in this case we have additional information on
 the size of \( \omega_{j} \) in relation to \( \mathfrak  D(\omega_{j}) \)
 given by our assumptions that \( \omega_{j} \) must be contained in
 some \( \hat I_{r,i} \).
 \end{itemize}
We obtain this bound in two steps.
    First of all we divide all iterates
into free and bound iterates and get estimates for the contribution
to \( \mathcal S \) of the blocks of consecutive free iterates and
bound iterates respectively.
 Secondly we add all these blocks together. This requires some
 care as we have no uniform bound on the number of such blocks,
 therefore the process of combining the estimates for each block need
 to be refined at this stage.

\subsubsection*{Free and bound iterates}
We split the sum \( \mathcal S \) into free iterates and bound
iterates to get
\begin{equation}\label{totaldistort}
\sum_{m=1}^{q} \left (\sum_{j=t_{m-1}+p_{m-1}+1}^{t_{m}-1}
\frac{|\omega_{j}|}{\mathfrak D (\omega_{j})} +
\sum_{j=t_{m}}^{t_{m}+p_{m}} \frac{|\omega_{j}|}{\mathfrak D
(\omega_{j})} \right) + \sum_{j=t_{q}+p_{q}+1}^{k-1}
\frac{|\omega_{j}|}{\mathfrak D ( \omega_{j})}.
\end{equation}
We include the return iterates, even though formally these are free,
with the bound iterates. This is not necessary but simplifies
slightly the calculations. For notational simplicity we define
$t_0+p_0+1=0$ so that in the general case the sum starts with the
free iterates \( 0,\ldots, t_{1}-1 \) preceding the first return at
time \( t_{1} \). In the special case in which the initial iterate
is already an essential return (such as when we construct the escape
partition \( \mathcal P(\Delta^{*}) \)) we have \( t_{1}=0 \) and
thus the first sum inside the parenthesis is empty for \( k=1 \).

\subsection{Distortion during free iterates}
\begin{lemma}\label{freelem}
 There exists a constant \( \mathcal D_{3}>0 \) such that for \(
 m=1,\ldots,q \),
 \begin{equation*}
   \sum_{j=t_{m-1}+p_{m-1}+1}^{t_{m}-1}
     \frac{|\omega_{j}|}{\mathfrak D (\omega_{j})}
  \leq  \mathcal D_{3} |\omega_{t_{m}}| e^{r_{t_{m}}}.
  \]
  For the last free period we have
  \[
  \sum_{j=t_{q}+p_{q}+1}^{k-1}
  \frac{|\omega_{j}|}{\mathfrak D (\omega_{j})} \leq
\frac{\mathcal D_{3}}{\delta},
 \end{equation*}
 and, restricting to a subinterval \( \tilde\omega \) as in the
 statement of Proposition \ref{escapedist} we have
 \[
 \sum_{j=t_{q}+p_{q}+1}^{k-1}
  \frac{|\tilde \omega_{j}|}{\mathfrak D (\tilde \omega_{j})} \leq
  \mathcal D_{3}.
 \]
\end{lemma}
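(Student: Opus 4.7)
The plan is to exploit the fact that during every free block $[t_{m-1}+p_{m-1}+1,\,t_m-1]$, each intermediate image $\omega_j$ lies entirely \emph{outside} $\Delta$. Indeed, if some $\omega_j$ met $\Delta$ the construction of Section~\ref{ss:escape times} would have declared $j$ an inessential or essential return strictly before $t_m$, contradicting the sequencing of free returns. Consequently $\mathfrak D(\omega_j)\geq\delta$ throughout the block, and the first bound reduces to producing a geometric estimate for $\sum|\omega_j|$ and then comparing $|\omega_{t_m}|/\delta$ with $|\omega_{t_m}|e^{r_{t_m}}$ by means of the inequality $1/\delta=e^{r_\delta}\leq e^{r_{t_m}}$, which holds since $r_{t_m}\geq r_\delta+1$.

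First I would invoke hypothesis (H1). Because $\omega_{t_m}\subset\Delta$, the second clause of (H1) yields $|(f^{t_m-j})'(x)|\geq\kappa e^{\lambda(t_m-j)}$ for every $x\in\omega_j$ and every $j$ in the block. Since every $\omega_i$, $j\leq i<t_m$, is at distance at least $\delta$ from $\mathcal C$, the ratio $|f''|/|f'|$ is uniformly bounded along these iterates; combined with the exponential growth of $|\omega_i|$ as $i\to t_m$ this produces a standard elementary distortion bound on $f^{t_m-j}|_{\omega_j}$ with a multiplicative constant $\mathcal D'$ independent of $\delta$. Therefore $|\omega_j|\leq \mathcal D'e^{-\lambda(t_m-j)}|\omega_{t_m}|$, and the geometric sum gives $\sum|\omega_j|\leq C|\omega_{t_m}|$ with $C=\mathcal D'/(1-e^{-\lambda})$. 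Dividing by the lower bound $\mathfrak D(\omega_j)\geq\delta$ and using $1/\delta\leq e^{r_{t_m}}$ produces the asserted bound $\mathcal D_3|\omega_{t_m}|e^{r_{t_m}}$.

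The two remaining estimates follow by the same mechanism with a different boundary condition on the right. For $j\in[t_q+p_q+1,\,k-1]$ no future return $t_{q+1}$ is available, but $\omega_j$ still lies outside $\Delta$ by definition, so $\mathfrak D(\omega_j)\geq\delta$ and $|\omega_{k-1}|\leq|J|$ is a fixed upper bound; expansion (H1) then gives $|\omega_j|\leq Ce^{-\lambda(k-1-j)}|\omega_{k-1}|$, whence summing and dividing by $\delta$ yields $\mathcal D_3/\delta$. For the restricted subinterval $\tilde\omega$, the hypothesis $\tilde\omega_k\subset\Delta$ gives $|\tilde\omega_k|\leq|\Delta|\leq C\delta$, and the identical geometric argument produces $\sum|\tilde\omega_j|\leq C|\tilde\omega_k|\leq C'\delta$; dividing by $\delta$ leaves a $\delta$-independent constant $\mathcal D_3$, as required.

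The main technical subtlety I anticipate is confirming that the intermediate distortion step is not circular with respect to the Proposition being proved: this is legitimate precisely because every $\omega_j$ appearing in the sum is $\delta$-away from $\mathcal C$, so on each free block the map is $C^2$ with uniformly controlled first and second derivatives, and a direct telescoping of $\log|(f^{t_m-j})'|$ on $\omega_j$ converges geometrically as soon as the expansion coming from the final landing $\omega_{t_m}\subset\Delta$ is inserted. The more delicate global distortion estimates that span several returns and binding periods (where $\omega_j$ may approach $\mathcal C$) are then reserved for the subsequent free/bound bookkeeping in the proof of Proposition~\ref{escapedist}.
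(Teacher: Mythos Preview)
Your approach is essentially the paper's: on each free block every $\omega_j$ lies outside $\Delta$, hence $\mathfrak D(\omega_j)\ge\delta$; the lengths $|\omega_j|$ decay geometrically backwards from the endpoint by (H1); and finally $1/\delta=e^{r_\delta}\le e^{r_{t_m}}$ converts the bound into the stated form.

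Two remarks. First, the distortion detour is unnecessary and is exactly the potential circularity you worry about. Once the second clause of (H1) gives $|(f^{t_m-j})'(x)|\ge\kappa e^{\lambda(t_m-j)}$ for \emph{every} $x\in\omega_j$, the Mean Value Theorem alone already yields
\[
|\omega_j|\le\kappa^{-1}e^{-\lambda(t_m-j)}|\omega_{t_m}|,
\]
with no reference to $|f''|/|f'|$ whatsoever. This is precisely how the paper argues; there is nothing to bootstrap.

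Second, your treatment of the middle inequality drops a factor of $\delta$. In the last free block neither endpoint is assumed to lie in $\Delta$ (nor the starting point in $f(\Delta)$), so only the \emph{weak} clause of (H1) is available, producing $|\omega_j|\le(\kappa\delta)^{-1}e^{-\lambda(k-j)}|\omega_k|$. Combining this with the crude bound $|\omega_{k-1}|\le|J|$ yields a sum of order $\delta^{-2}$, not $\delta^{-1}$. The paper instead exploits that by construction $|\omega_k|<\delta$ (the interval has not escaped), and this extra $\delta$ in the numerator cancels the $\delta^{-1}$ coming from the weak form of (H1), giving exactly $\mathcal D_3/\delta$. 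The distinction is not cosmetic: the same $\mathcal D_3$ appears in all three inequalities and is meant to be independent of $\delta$.
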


 \begin{proof}
     For the first \( q \) free periods,
since \( \omega_{t_{m}}\subset\Delta \) the expansivity condition
(H1) implies
\begin{equation*}
|\omega_{j}| \leq \kappa^{-1} e^{ -\lambda
(t_{m+1}-j)}|\omega_{t_{m+1}}|,
\end{equation*}
and thus for $m\leq q$, using also the fact that \( \DD(\omega_{j})
\geq \delta \geq e^{-r_{t_{m+1}}}\),
\begin{equation*}
  \sum_{j=t_{m-1}+p_{m-1}+1}^{t_{m}-1}
    \frac{|\omega_{j}|}{\mathcal D (\omega_{j})}
 \leq
 \frac{|\omega_{t_{m}}|}{e^{-r_{t_{m}}}}
 \sum_{j=t_{m-1}+p_{m-1}+1}^{t_{m}-1} \kappa^{-1} e^{ -\lambda (t_{m}-j)}
\leq \mathcal D_{3}  |\omega_{t_{m}}| e^{r_{t_{m}}},
\end{equation*}
for some constant $\mathcal D_{3} >0$. This proves the first
inequality.

 For the last free period, restricting to \( \tilde\omega
\) and \( k \) as in the statement of Proposition \ref{escapedist},
we get exactly the same estimates with \( \omega_{t_{m}} \) replaced
by \( \tilde\omega_{k} \), and so, using the fact that \(
|\tilde\omega_{k}| \leq \delta \) we get the third inequality.
Without restricting to \( \tilde\omega \) and without assuming that
\( \omega_{k}\subset\Delta \), (H1) only gives a weaker expansion
estimate which implies \( |\omega_{j}| \leq \kappa^{-1} \delta^{-1}
e^{ -\lambda (k-j)}|\omega_{k}|\) and therefore a final estimate as
in the second inequality.
\end{proof}

\begin{remark}
    The dependence of \( \mathcal D_{\delta}  \) and \( \tilde{\mathcal
    D}_{\delta} \) on \( \delta \) comes entirely from the
    contribution of the last term of the  sum in \eqref{totaldistort}
    which is bounded by \( \mathcal D_{3}/\delta \) as shown in Lemma
    \ref{freelem}. For simplicity we shall now continue the proof of
    Proposition \ref{escapedist} under the assumptions which give
    bounds independent of \( \delta \). The additional statement
    follows by making minimal and obvious modifications.

    Notice also that restricting to the subinterval \(
\tilde\omega \) for the last term of   \eqref{totaldistort} does not
affect the bounds obtained for the previous terms as \(
\tilde\omega_{j}\subseteq\omega_{j} \) always implies \( {|\tilde
\omega_{j}|}/{\mathfrak  D (\tilde \omega_{j})} \leq
{|\omega_{j}|}/{\mathfrak D (\omega_{j})} \).
\end{remark}

\subsection{Distortion during binding periods}
\begin{lemma}\label{boundlem}
    There exists a constant \(
    \mathcal D_{4} >0 \) such that
\[ \sum_{j=t_{m}}^{t_{m}+p_{m}}
\frac{|\omega_{j}|}{\mathfrak D (\omega_{j})} \leq \mathcal D_{4}
|\omega_{t_{m}}| e^{r_{t_{m}}}.
\]
\end{lemma}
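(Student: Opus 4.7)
The claim is immediate when $c\in\mathcal{C}_s$, since then $p_m=0$ and the sum reduces to the single term $|\omega_{t_m}|/\DD(\omega_{t_m})$; because $\omega_{t_m}\subset\hat{I}_{r_{t_m}}$ we have $\DD(\omega_{t_m})\geq e^{-r_{t_m}}$ up to a harmless multiplicative constant, which immediately yields $\mathcal{D}_4|\omega_{t_m}|e^{r_{t_m}}$. The body of the proof is therefore concerned with $c\in\mathcal{C}_c$, where $p_m$ may be large (comparable to $r_{t_m}$ by \eqref{bindlen}).

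For $c\in\mathcal{C}_c$, my plan rests on three estimates that I then combine to obtain a geometric-type series. First, the binding definition itself places $\omega_{t_m+i}$ inside a $\delta e^{-2\alpha(i-1)}$-neighbourhood of $c_i$, and so $|\omega_{t_m+i}|\leq 2\delta e^{-2\alpha(i-1)}$. Second, hypothesis (H2) gives $\DD(c_i)\geq\delta e^{-\alpha i}$, which combined with the shadowing from the binding yields $\DD(\omega_{t_m+i})\geq\tfrac{1}{2}\delta e^{-\alpha i}$ for $i$ beyond an absolute threshold (the finitely many initial indices being absorbed into the constant $\mathcal{D}_4$). Third, the bounded-distortion estimate \eqref{binddist} during binding, combined with the nondegeneracy inequality \eqref{defnCmap2} applied at the first iterate, supplies the alternative upper bound
\[
|\omega_{t_m+i}|\leq C\mathcal{D}_1|(f^{i-1})'(c_1)|\,e^{-r_{t_m}(\ell_c-1)}|\omega_{t_m}|.
\]

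The two upper bounds on $|\omega_{t_m+i}|$ are complementary: the derivative-based bound dominates for small $i$ because of the contracting prefactor $e^{-r_{t_m}(\ell_c-1)}|\omega_{t_m}|$, while the shadowing bound dominates once $i$ is large enough. I would split the sum at the crossover index $i^\ast$ where the two upper bounds coincide; on $i\leq i^\ast$ one uses the derivative-based estimate, which keeps $|\omega_{t_m}|e^{-r_{t_m}(\ell_c-1)}$ outside the sum, while on $i>i^\ast$ one uses the binding bound against the denominator lower bound together with $1/\delta\leq e^{r_{t_m}}$ to generate the required factor $e^{r_{t_m}}$, exactly as in the treatment of the last free period in the proof of Lemma \ref{freelem}. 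Summing a geometric series in $e^{-\alpha}$ then collects both halves into $\mathcal{D}_4|\omega_{t_m}|e^{r_{t_m}}$.

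The most delicate point is quantitatively controlling the accumulated derivative growth $|(f^{i-1})'(c_1)|$ along the critical orbit during binding, so that it does not swamp the contraction factor $e^{-r_{t_m}(\ell_c-1)}$ produced by the first iterate out of $\hat{I}_{r_{t_m}}$. This is precisely where the length bound \eqref{bindlen} on the binding period is essential: it caps the admissible derivative growth at a level comparable to $e^{r_{t_m}(\ell_c-1)}$ (just as in the proof of Lemma \ref{bindexp}), so that the two factors cancel and leave the clean residue $|\omega_{t_m}|e^{r_{t_m}}$ demanded by the statement.
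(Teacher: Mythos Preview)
Your approach is genuinely different from the paper's and, as written, has a real gap. The paper does not estimate numerator $|\omega_j|$ and denominator $\DD(\omega_j)$ separately; instead it factorizes
\[
\frac{|\omega_j|}{\DD(\omega_j)}\le\frac{|x_j-y_j|}{|y_j-c_{j-t_m}|}\cdot\frac{|y_j-c_{j-t_m}|}{|y_j-c(y_j)|},
\]
bounds the first ratio by $\mathcal D_5\,|\omega_{t_m}|/\DD(\omega_{t_m})$ directly from the bounded distortion \eqref{binddist} applied to the pairs $(x,y)$ and $(y,c)$ simultaneously, and bounds the second ratio by $\mathcal D_6 e^{-\alpha(j-t_m)}$ from (H2) and the binding definition. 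This never requires knowing the size of $|(f^{i-1})'(c_1)|$.

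Your scheme, by contrast, hinges on an \emph{upper} bound for $|(f^{i-1})'(c_1)|$, and this is where the argument breaks. You invoke \eqref{bindlen}, but \eqref{bindlen} only bounds the length $p$ of the binding period; converting a bound on $p$ into a bound on $|(f^{p-1})'(c_1)|$ would need a pointwise upper bound on $|f'|$, which is unavailable because the critical orbit can visit neighbourhoods of $\mathcal C_s$ where $|f'|$ is unbounded. Consequently your ``head'' sum $\sum_{i\le i^*}$ cannot be closed, and your ``tail'' sum $\sum_{i>i^*}$, based purely on estimates (1) and (2), is only $\lesssim e^{-\alpha i^*}$---a quantity depending on $r$ alone, with no factor of $|\omega_{t_m}|$. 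Since the lemma must hold for arbitrarily small $|\omega_{t_m}|$ (the interval $\omega$ is arbitrary), this is insufficient.

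The approach is salvageable: the needed upper bound $|(f^{i-1})'(c_1)|\lesssim \delta e^{-2\alpha(i-1)}e^{r\ell_c}$ does follow, but from the binding inequality itself applied to the \emph{outer} endpoint of $\hat I_r$ (combined with the Mean Value Theorem and \eqref{binddist}), not from \eqref{bindlen}. Once you have it, your splitting at $i^*$ becomes superfluous: plugging this bound into your estimate (3) gives $|\omega_{t_m+i}|\lesssim \delta e^{-2\alpha(i-1)}e^{r}|\omega_{t_m}|$ for every $i$, and dividing by estimate (2) yields each term $\lesssim e^{-\alpha i}e^{r}|\omega_{t_m}|$, which sums to the claim. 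But note that this derivation of the derivative upper bound is essentially the ratio argument the paper uses, reassembled; the paper's factorization is the cleaner packaging of the same idea.
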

\begin{proof}
For each \( j \geq t_{m}\) we let \( x_{j}, y_{j}\) be two arbitrary
points in \( \omega_{j} \), and let \( c(y_{j}) \) denote the
critical point closest to \( y_{j} \), so that \(
|y_{j}-c(y_{j})|=\mathfrak  D(y_{j}) \) and,  in particular, \(
c(y_{t_{m}}) \) is the critical point involved in the binding
period. Then we can write
\begin{equation}\label{eqbind0}
 \frac{|\omega_{j}|}{\mathfrak D(\omega_{j})}\leq
 \frac{|x_j-y_j|}{|y_j-c(y_{j})|}=\frac{|x_j-y_j|}{|y_j-f^{j-t_{m}}(c(y_{t_{m}}))|}\cdot
 \frac{|y_j-f^{j-t_{m}}(c(y_{t_{m}}))|}{|y_j-c(y_{j})|}.
 \end{equation}
 We estimate the two ratios on the right hand side separately.
    \begin{sublemma}\label{eqbindsub1}
There exists a constant \( \mathcal D_{5}> 0 \) such that for each
\(  j = t_{m},\ldots,  t_{m}+p_{m} \) we have
\begin{equation}\label{eqbind1}
\frac{|x_j-y_j|}{|y_j-f^{j-t_{m}}(c(y_{t_{m}}))|} \leq \mathcal
D_{5} \frac{|\omega_{t_{m}}|}{\mathfrak{D}(\omega_{t_{m}})}.
\end{equation}
\end{sublemma}
\begin{proof}
    For \(j= t_{m} \) we have
\[
\frac{|x_j-y_j|}{|y_j-f^{j-t_{m}}(c(y_{t_{m}}))|} =
\frac{|x_{t_{m}}-y_{t_{m}}|}{|y-c(y_{t_{m}})|} \leq
\frac{|\omega_{t_{m}}|}{\mathfrak{D}(\omega_{t_{m}})}.
\]
For $t_{m}-1<j \leq t_{m}+p_{m}$ recall the bounded distortion
property during binding periods,
 \eqref{binddist}, which gives
\begin{equation}\label{eqbind0a}
\frac{|x_j-y_j|}{|y_j-f^{j-t_{m}}(c_{t_{m}})|}\leq
\mathcal{D}_1\frac{|x_{t_{m}+1}-y_{t_{m}+
1}|}{|y_{t_{m}+1}-f(c(y_{t_{m}}))|}.
\end{equation}
Since \( |\omega_{t_{m}}| \ll \mathfrak  D(\omega_{t_{m}}) \) we
have, using the non-degeneracy conditions \eqref{defnCmap2} and
\eqref{defnCmap1} on the order of the critical points,
\begin{equation}\label{eqbind0b}
|x_{t_{m}+1}-y_{t_{m}+1}| \lesssim |\omega_{t_{m}}| \mathfrak
D(\omega_{t_{m}})^{\ell_{c}-1}.
\end{equation}
and
\begin{equation}\label{eqbind0c}
|y_{t_{m}+1}-f(c(y_{t_{m}}))| \approx
|y_{t_{m}}-c(y_{t_{m}})|^{\ell_{c}} \approx \mathfrak
D(\omega_{t_{m}})^{\ell_{c}}.
\end{equation}
We use here the symbol \( \lesssim \), respectively  \( \approx \),
to mean that the left hand side is bounded above, respectively above
and below, by constants that depend only on the map \( f \).

Substituting \eqref{eqbind0b} and \eqref{eqbind0c} into
\eqref{eqbind0a} we obtain \eqref{eqbind1}.
\end{proof}

\begin{sublemma}\label{eqbindsub2}
There exists a constant \( \mathcal D_{6}>0 \) such that for each \(
j = t_{m},\ldots,  t_{m}+p_{m} \) we have
\[
   \frac{|y_j-f^{j-t_{m}}(c(y_{t_{m}}))|}{|y_j-c(y_j)|}
   \leq \mathcal D_{6} e^{-\alpha(j-t_{m})},
\]
 \end{sublemma}
\begin{proof}
 For \( j=t_{m} \) we have
 \[
 \frac{|y_j-f^{j-t_{m}}(c(y_{t_{m}}))|}{|y_j-c(y_j)|} =
 \frac{|y_{t_{m}}-c(y_{t_{m}})|}{|y_{t_{m}}-c(y_{t_{m}})|}=1.
 \]
 For $t_{m}-1<j \leq t_{m}+p_{m}$, the definition of binding period gives
 \[
 |y_j-f^{j-t_{m}}(c(y_{t_{m}}))| \leq \delta e^{-2\alpha(j-t_{m})}
 \]
 and, in conjunction with the bounded recurrence condition
(H2),
\[
|y_j-c(y_j)| \geq \delta e^{-\alpha(j-t_{m})}-\delta
e^{-2\alpha(j-t_{m})} = \delta e^{-\alpha (j-t_{m})}(1-e^{-\alpha
(j-t_{m})}) \geq \delta e^{-\alpha (j-t_{m})}(1-e^{-\alpha}).
\]
Therefore we have
\begin{equation}\label{eqbind2}
\frac{|y_j-f^{j-t_{m}}(c(y_{t_{m}}))|}{|y_j-c(y_j)|}\leq\frac{\delta
e^{-2\alpha(j-t_{m})}} {\delta e^{-\alpha (j-t_{m})}(1-e^{-\alpha})
} \leq (1-e^{-\alpha}) e^{-\alpha(j-t_{m})}.
\end{equation}
\end{proof}
Returning to the proof of Lemma \ref{boundlem}, substituting the
bounds obtained in Sublemmas \ref{eqbindsub1} and \ref{eqbindsub2}
into \eqref{eqbind0}  and letting \( \mathcal D_{7}=\mathcal
D_{5}\mathcal D_{6} \) and \( \mathcal D_{4}=\mathcal
D_{7}\sum_{i=0}^{\infty}  e^{-\alpha i}\) we get
\[
\sum_{j=t_{m}}^{t_{m}+p_{m}} \frac{|\omega_{j}|}{\mathfrak D
(\omega_{j})} \leq \sum_{j=t_{m}}^{t_{m}+p_{m}} \mathcal D_{7}
\frac{|\omega_{t_{m}}|}{\mathfrak{D}(\omega_{t_{m}})}
e^{-\alpha(j-t_{m})} \leq \mathcal D_{7}
\frac{|\omega_{t_{m}}|}{\mathfrak{D}(\omega_{t_{m}})}
\sum_{i=0}^{\infty}  e^{-\alpha i} = \mathcal D_{4}
\frac{|\omega_{t_{m}}|}{\mathfrak{D}(\omega_{t_{m}})}
\]
Finally, to get the statement in the Lemma recall that \( \mathfrak
D(\omega_{t_{m}}) \geq e^{r_{t_{m}}} \) by construction.
\end{proof}

\subsection{Combining free period and bound period estimates}
Combining the estimates of Lemmas \ref{freelem} and \ref{boundlem}
and letting \( \mathcal D_{8}=\mathcal D_{3}+\mathcal D_{4} \) gives
\[
\mathcal S = \sum_{j=0}^{k-1} \frac{|\omega_{j}|}{\mathfrak D
(\omega_{j})} \leq \mathcal D_{3}+\mathcal D_{8}\sum_{m=1}^{q}
\frac{|\omega_{t_{m}}|}{\mathfrak D (\omega_{t_{m}})} \leq \mathcal
D_{3}+\mathcal D_{8}\sum_{m=1}^{q} |\omega_{t_{m}}| e^{r_{t_{m}}}.
\]
Therefore it is sufficient to show that \( \sum_{m=1}^{q}
|\omega_{t_{m}}| e^{r_{t_{m}}} \) is uniformly bounded above. Recall
that by construction we have \( |\omega_{t_{m}}|\lesssim
e^{-r_{t_{m}}}/(r_{t_{m}})^{2} \) and so \( |\omega_{t_{m}}|
e^{r_{t_{m}}} \lesssim 1/r_{t_{m}}^{2} \) and \( \sum_{m=1}^{q}
|\omega_{t_{m}}| e^{r_{t_{m}}} \lesssim \sum_{m=1}^{q}
1/r_{t_{m}}^{2} \). This does not however imply a uniform upper
bound for the overall sum, since the sequence \( r_{t_{m}} \) is not
generally monotone and might take on the same value with unbounded
multiplicity. We therefore need to refine our estimates as follows.
First of all we subdivide the \( q \) returns under consideration
into returns with the same return depth:
\begin{equation}\label{sumsamedepth}
\sum_{m=1}^{q} |\omega_{t_{m}}| e^{r_{t_{m}}} = \sum_{r}
\sum_{t_{m}: r_{t_{m}=r} } |\omega_{t_{m}}| e^{r_{t_{m}}} = \sum_{r}
e^{r}\sum_{t_{m}: r_{t_{m}=r} } |\omega_{t_{m}}|.
\end{equation}

\begin{sublemma}
    There exists a constant \( \mathcal D_{9} \) such that
    for each \( r \) we have
    \begin{equation}\label{samedepth}
    \sum_{t_{m}: r_{t_{m}=r} }
    |\omega_{t_{m}}| \leq \mathcal D_{9}\frac{e^{-r}}{r^{2}}.
    \end{equation}
\end{sublemma}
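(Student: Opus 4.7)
The plan is to enumerate the returns at depth $r$ as $\tau_{1}<\tau_{2}<\cdots<\tau_{N_{r}}$ and to show that consecutive images grow by a large multiplicative factor $|\omega_{\tau_{i+1}}|\geq K_{\delta}|\omega_{\tau_{i}}|$ with $K_{\delta}\to\infty$ as $\delta\to 0$. By construction of the essential/inessential return chopping, each image $\omega_{\tau_{i}}$ sits inside at most three adjacent intervals $\hat I_{r,j}$ of the critical partition, so $|\omega_{\tau_{i}}|\lesssim e^{-r}/r^{2}$. Summing a geometric tail backward from $i=N_{r}$ will then produce the bound.

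To prove the expansion ratio, I would split the window $[\tau_{i},\tau_{i+1}]$ into a binding segment $[\tau_{i},\tau_{i}+p_{i}+1]$ and a subsequent free segment $[\tau_{i}+p_{i}+1,\tau_{i+1}]$. Lemma \ref{bindexp} provides the pointwise lower bound $|(f^{p_{i}+1})'(x)|\geq \kappa^{-1}e^{\theta r}$ for every $x\in\omega_{\tau_{i}}\subset\hat I_{r}$, which integrates to $|\omega_{\tau_{i}+p_{i}+1}|\geq \kappa^{-1}e^{\theta r}|\omega_{\tau_{i}}|$. On the free segment every intermediate image lies outside $\Delta$ by construction; because $\mathfrak D(\omega_{j})\geq \delta$ there, a standard Koebe-type distortion estimate (relying only on the backward-geometric decay of $|\omega_{j}|$ supplied by (H1) and on uniform $C^{2}$ bounds on $J\setminus\Delta$, and therefore \emph{avoiding any circular appeal} to Proposition \ref{escapedist} itself) yields a distortion constant $\mathcal D_{0}$ for $f^{n}$ acting on $\omega_{\tau_{i}+p_{i}+1}$. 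Choosing a point $x^{*}\in\omega_{\tau_{i}+p_{i}+1}$ whose orbit lands in $\omega_{\tau_{i+1}}\cap\Delta$ (which is nonempty by the definition of a return) and applying the strong version of (H1) gives $|(f^{n})'(x^{*})|\geq \kappa$; bounded distortion then propagates this to $|(f^{n})'(y)|\geq \kappa/\mathcal D_{0}$ uniformly on $\omega_{\tau_{i}+p_{i}+1}$. Combining the two factors,
\[
|\omega_{\tau_{i+1}}|\geq \mathcal D_{0}^{-1}\,e^{\theta r}\,|\omega_{\tau_{i}}|.
\]

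Since $r\geq r_{\delta}+1=\log\delta^{-1}+1$ and $\theta>0$ is independent of $\delta$, the factor $K_{\delta}:=e^{\theta r}/\mathcal D_{0}\geq e^{\theta}\delta^{-\theta}/\mathcal D_{0}$ exceeds $2$ once $\delta$ is taken sufficiently small. Reverse induction then yields $|\omega_{\tau_{N_{r}-k}}|\leq K_{\delta}^{-k}|\omega_{\tau_{N_{r}}}|$, and summing the geometric tail gives
\[
\sum_{i=1}^{N_{r}}|\omega_{\tau_{i}}|\leq |\omega_{\tau_{N_{r}}}|\sum_{k\geq 0}K_{\delta}^{-k}\leq 2|\omega_{\tau_{N_{r}}}|\leq \mathcal D_{9}\,\frac{e^{-r}}{r^{2}},
\]
for a suitable constant $\mathcal D_{9}$. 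The genuine technical obstacle is the free-period distortion estimate, which must be obtained independently of the ambient Proposition \ref{escapedist}; this is possible precisely because during the free segment the orbit stays uniformly bounded away from the critical set $\mathcal C$, so all the nondegeneracy ratios that cause trouble in the general distortion argument are here harmless constants.
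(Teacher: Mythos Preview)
Your overall strategy---geometric growth of the images together with the upper bound $|\omega_{\tau_{N_r}}|\lesssim e^{-r}/r^2$ on the last term---is exactly the paper's, but your implementation of the growth step contains a genuine gap. You enumerate only the returns \emph{at depth $r$} and then split $[\tau_i,\tau_{i+1}]$ into a single binding period followed by a single free segment on which ``every intermediate image lies outside $\Delta$''. This is false: $\tau_i$ and $\tau_{i+1}$ are consecutive among the returns of depth $r$, not among all returns, so the window $[\tau_i+p_i+1,\tau_{i+1}]$ will in general contain further returns to $\Delta$ (with depths $r'\neq r$) and their associated binding periods. Your appeal to (H1) on that whole stretch, which requires the orbit to stay outside $\Delta$, therefore does not apply.

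The paper avoids this by proving the growth between \emph{every} pair of consecutive returns $t_m,t_{m+1}$ in the full list: Lemma~\ref{bindexp} gives $|\omega_{t_m+p_m+1}|\geq \kappa^{-1}e^{\theta r_{t_m}}|\omega_{t_m}|$, and since all iterates between $t_m+p_m+1$ and $t_{m+1}$ are genuinely free and outside $\Delta$ while $\omega_{t_{m+1}}$ lands in $\Delta$, the strong form of (H1) gives $|(f^{t_{m+1}-t_m-p_m-1})'(x)|\geq \kappa$ \emph{for every point} of the interval. The Mean Value Theorem then yields $|\omega_{t_{m+1}}|\geq e^{\theta r_{t_m}}|\omega_{t_m}|\geq e^{\theta r_\delta}|\omega_{t_m}|$. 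The subsequence of returns at depth $r$ inherits at least this growth factor, and the rest of your argument goes through. Note also that your detour through a separate ``free-period distortion estimate'' is unnecessary: (H1) is already a pointwise derivative lower bound, so no distortion control is needed to pass to lengths.
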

\begin{proof}
 The statement follows from the fact that the interval \( \omega_{j} \)
 is growing exponentially fast between one return and the next.
 Indeed, by (H1) and Lemma~\ref{bindexp}, between any two consecutive
 returns we have:
\[
|\omega_{t_{m+1}}| \geq \kappa |f^{p_m + 1}(\omega_{t_m})| \geq
e^{\theta r_{t_{m}}}|\omega_{t_{m}}| \geq   e^{\theta
r_{\delta}}|\omega_{t_{m}}|.
\]
Iterating this process we get that the terms in the sum
\eqref{samedepth} form an exponentially decreasing sequence bounded
above by the length of the last term, i.e. the term with the highest
return time \( t_{m} \). For this term, our assumptions imply \(
|\omega_{t_{m}}| \leq {e^{-r}}/{r^{2}} \) thus proving the result.
\end{proof}
Substituting \eqref{samedepth} into \eqref{sumsamedepth} gives
\[
\sum_{m=1}^{q} |\omega_{t_{m}}| e^{r_{t_{m}}} = \sum_{r}
e^{r}\sum_{t_{m}: r_{t_{m}=r} } |\omega_{t_{m}}| \leq \mathcal D_{9}
\sum_{r} \frac{1}{r^{2}}
\]
which is bounded above by a uniform constant. This gives
\begin{equation}\label{sumbound}
\mathcal S = \sum_{j=0}^{k-1} \frac{|\omega_{j}|}{\mathfrak D
(\omega_{j})} \leq \mathcal D_{3}+\mathcal D_{8}\sum_{m=1}^{q}
|\omega_{t_{m}}| e^{r_{t_{m}}} \leq \mathcal D_{3}+\mathcal
D_{8}\mathcal D_{9} \sum_{r} \frac{1}{r^{2}}=:\mathcal D_{10}.
\end{equation}
This together with Lemma \ref{reduction} completes the proof of the
first inequality in Proposition \ref{escapedist}, with constant \(
\mathcal D= e^{\mathcal D_{2}\mathcal D_{10}}. \)

\subsection{Lipschitz distortion}\label{lip}
First of all, for any subinterval \( \bar\omega\subset\omega \) we
have
\[
\frac{|\bar\omega_{j}|}{\mathfrak D (\bar \omega_{j})} \leq
\frac{|\bar\omega_{j}|}{\mathfrak D (\omega_{j})} = \frac{|\bar
\omega_{j}|}{|\omega_{j}|} \frac{|\omega_{j}|}{\mathfrak D
(\omega_{j})}.
\]
For all \( x,y\in \bar\omega \), from the first inequality of
Proposition \ref{escapedist} we have
\[
\mathcal D_{2}\mathcal D_{10}\geq \log
\biggl|\frac{(f^{k})'(x)}{(f^{k})'(y)}\biggr| \geq \frac{1}{\mathcal
D_{2}\mathcal D_{10}}.
\]
Therefore, since \( x -1 \) and \( \log x \) are comparable on any
interval bounded away from 0 and \( \infty \), there exists a
constant \( \mathcal D_{11} \), depending on \( \mathcal D\),  such
that
\[
\biggl|\frac{(f^{k})'(x)}{(f^{k})'(y)}-1\biggr| \leq \mathcal D_{11}
\log \biggl|\frac{(f^{k})'(x)}{(f^{k})'(y)}\biggr|.
\]
Thus, by Lemma \ref{reduction}, and \eqref{sumbound} we have
\[
\biggl|\frac{(f^{k})'(x)}{(f^{k})'(y)}-1\biggr| \leq
 \mathcal D_{11}\mathcal D_{2} \sum_{j=0}^{k-1}
\frac{|\omega_{j}|}{\mathfrak D (\omega_{j})}\leq \mathcal
D_{11}\mathcal D_{2} \sum_{j=0}^{k-1} \frac{|\tilde
\omega_{j}|}{|\omega_{j}|}
 \frac{|\omega_{j}|}{\mathfrak D (\omega_{j})}
 \leq   \mathcal D_{11}\mathcal D_{2}  \mathcal D_{10}
 \frac{|\tilde \omega_{j}|}{|\omega_{j}|}.
\]
Finally, using once again the first inequality of Proposition
\ref{escapedist} and the Mean Value Theorem, the ratios \(
{|\bar\omega_{j}|}/{|\omega_{j}|}  \) are all uniformly comparable
to the ratio \(  {|\bar\omega_{k}|}/{|\omega_{k}|}  \). This implies
the second inequality in Proposition \ref{escapedist} and thus
completes the proof.

\section{Escape time estimates}
\label{escest}
In this section we give a complete proof of Proposition \ref{basetail}.
We assume throughout that  $\omega
=\Delta^{*}$
or \( \omega \) is an arbitrary interval with
$\delta \geq |\omega|\geq\delta/3$.
We fix from now on some \( n\geq 1 \) and recall
the definition of
\[
\mathcal E_{n}(\omega)=\{\omega'\subseteq\omega
\text{ which have not escaped by time } n\}.
\]
Each \( \omega'\in \mathcal E_{n}(\omega)  \)
has an associated sequence
\[
\nu_{1}, \nu_{2}, \ldots, \nu_{s}
\]
of \emph{essential} return times occurring before time \( n \),
and a corresponding sequence
\[
r_1,r_2,\dots,r_s
\]
of \emph{essential} return depths. (Notice that in contrast to 
Section \ref{distest} we use
a different labelling of the subscripts for the return depths).
If \( \omega=\Delta^{*} \) then \( \nu_{1}=0 \) and the sequences \(
\nu_{i} \) and \( r_{i} \) are  non-empty. In the general case
these sequences may be empty if $\omega'$ escapes without
intersecting $\Delta$.
We let
 \[
 \mathcal E_{n, R}(\omega) =\{\omega'\in\mathcal E_{n}(\omega):
r_{1}+\dots + r_{s} = R\}
 \]
 denote the union of elements of \( \mathcal E_{n}(\omega) \) with a
 given value \( R \) for the \emph{accumulated return depth}.
 Notice that \(  R \) can be 0 if the sequence
 of returns  is empty,
 but if it is non-zero then we must have  \( R\geq r_{\delta} \).
 We split the proof of Proposition \ref{basetail} into several lemmas.
First of all we show that intervals \( \omega' \) in
\(  \mathcal E_{n, R}(\omega) \) are
 exponentially small in \( R \).

\begin{lemma}\label{metestlem}
For every \( n\geq 1, R\geq 1\) and
 $\omega'\in\mathcal E_{n, R}(\omega)$
 we have
 \begin{equation}\label{len}
 |\omega'|\leq \kappa^{-1} e^{-\theta R}.
 \end{equation}
  \end{lemma}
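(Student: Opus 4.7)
The plan is to bound $|\omega'|$ via the Mean Value Theorem applied to $f^{\nu_s}$, where $\nu_s$ is the last essential return time of $\omega'$. The size-at-$\nu_s$ ingredient is immediate from the construction: at an essential return of depth $r_s$ the subdivision rule in Section~\ref{ss:escape times} puts $\omega'_{\nu_s} = f^{\nu_s}(\omega')$ inside some $\hat I_{r_s, j_s}$, which has length at most $C_0 e^{-r_s}/r_s^{2}$ for a constant $C_0$ depending only on the critical partition $\mathcal I$. The remaining task is to show that the depths $r_1, \ldots, r_{s-1}$ are paid for by uniform expansion of $f^{\nu_s}$ on $\omega'$.

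To produce that expansion, I would partition the iterates $\{0, 1, \ldots, \nu_s - 1\}$ into maximal alternating blocks of two kinds: \emph{free blocks} on which the orbit of $\omega'$ stays outside $\Delta$, and \emph{return-plus-binding blocks} $[\nu, \nu + p(r)]$ each starting at a return of depth $r$ (essential or inessential) and lasting through its binding period. On each return-plus-binding block, Lemma~\ref{bindexp} gives a factor $\geq \kappa^{-1} e^{\theta r}$. On each free block of length $n$, the next iterate lies in $\Delta$ (it is the start of the following return, or of the essential return at $\nu_s$ itself for the last block), so the stronger form of (H1) applies and gives a factor $\geq \kappa e^{\lambda n} \geq \kappa$. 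Since there is precisely one more free block than return-plus-binding block, multiplying all contributions the factors of $\kappa$ and $\kappa^{-1}$ telescope, and dropping any inessential-depth contributions (which only help) leaves
\[
|(f^{\nu_s})'(x)| \;\geq\; \kappa \, e^{\theta(r_1 + \cdots + r_{s-1})} \;=\; \kappa \, e^{\theta(R - r_s)}.
\]

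Combining the two ingredients via the Mean Value Theorem yields some $\xi \in \omega'$ with $|\omega'_{\nu_s}| = |(f^{\nu_s})'(\xi)| \cdot |\omega'|$, whence
\[
|\omega'| \;\leq\; \frac{C_0 e^{-r_s}/r_s^{2}}{\kappa \, e^{\theta(R - r_s)}} \;=\; \frac{C_0}{\kappa r_s^{2}} \, e^{-(1-\theta) r_s} \, e^{-\theta R} \;\leq\; \kappa^{-1} e^{-\theta R},
\]
where the last inequality uses $1 - \theta > 0$ and $r_s \geq r_\delta$ to absorb the prefactor for $\delta$ chosen small. I expect the main technical point to be the bookkeeping in the telescoping step, specifically verifying that each free block is indeed followed by an iterate in $\Delta$ so that the strong form of (H1) applies; using only the weak form would leave an uncontrolled factor $(\kappa \delta)^{M+1}$ with $M$ the total number of returns before $\nu_s$. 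The edge cases $\nu_1 = 0$ (which occurs precisely when $\omega = \Delta^*$) and $s = 1$ cause the initial free block (respectively the product $\prod_{i=1}^{s-1}$) to be empty, and the argument goes through unchanged.
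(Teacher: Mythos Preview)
Your proof is correct and takes essentially the same route as the paper: bound $|\omega'_{\nu_s}|$ at the last essential return, obtain $|(f^{\nu_s})'|\geq \kappa\, e^{\theta(R-r_s)}$ by combining Lemma~\ref{bindexp} with the strong form of (H1) (the $\kappa$'s telescoping exactly as you describe), and apply the Mean Value Theorem. The only cosmetic difference is that the paper groups iterates between consecutive \emph{essential} returns, stating $|(f^{\nu_{i+1}-\nu_i})'(x_{\nu_i})|\geq e^{\theta r_i}$ directly, and uses the coarser bound $|\omega'_{\nu_s}|\leq e^{-r_s}$ so that $e^{-r_s}\leq e^{-\theta r_s}$ finishes the estimate without having to absorb your constant $C_0$.
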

Next, we show that  the number of intervals \( \omega' \)
    with the same accumulated return
     depth can grow  exponentially but at most with a very
     small exponential rate.
  \begin{lemma}\label{combestlem}
      There exists a constant \( \tilde\eta>0 \) which can be
      made arbitrarily small if \( \delta \) is small, such that for
      every \(n\geq 1,  R\geq 0 \)
  \[
  \sharp\{\mathcal E_{n, R}(\omega)\}\leq e^{\tilde\eta R}.
  \]
  \end{lemma}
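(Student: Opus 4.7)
The plan is to reduce the counting to a combinatorial problem by encoding each \( \omega' \in \mathcal E_{n,R}(\omega) \) by its sequence of essential-return data, and then to bound the number of admissible sequences with accumulated depth \( R \). Concretely, to each \( \omega' \) I would associate the symbolic itinerary \( \sigma(\omega') = ((c_{1},r_{1},j_{1}),\ldots,(c_{s},r_{s},j_{s})) \), where at the \( i \)-th essential return time \( \nu_{i} \) one has \( I_{r_{i},j_{i}} \subset f^{\nu_{i}}(\omega') \subset \hat I_{r_{i},j_{i}} \) near the one-sided critical point \( c_{i}\in \mathcal C \). The chopping step (3) in the algorithm of Section \ref{ss:escape times} creates exactly one child subinterval for each admissible triple \( (c,r,j) \), so the map \( \omega' \mapsto \sigma(\omega') \) is injective; the times \( \nu_{i} \) themselves are determined by \( \omega \) and \( \sigma(\omega') \) through the dynamics, so they need not be counted independently. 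Consequently \( \sharp \mathcal E_{n,R}(\omega) \) is bounded by the number of admissible itineraries with \( \sum r_{i} = R \).

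I would then estimate this count as the product of three factors, each of which contributes only a tiny exponential in \( R \) provided \( \delta \) (and hence \( r_{\delta}^{-1} \)) is small. First, the number of ordered depth sequences \( (r_{1},\ldots,r_{s}) \) with \( r_{i}\geq r_{\delta}+1 \) and \( \sum r_{i}=R \) is at most \( \rho(r_{\delta})^{R} \), where \( \rho(r_{\delta}) \) is the dominant root of \( x^{r_{\delta}}=1+x+\cdots+x^{r_{\delta}-1} \) and satisfies \( \rho(r_{\delta})\to 1 \) as \( r_{\delta}\to\infty \) (standard generating-function estimate). Second, since \( s\leq R/r_{\delta} \), the choice of one-sided critical point at each return contributes at most \( (2|\mathcal C|)^{s}\leq \exp\bigl(R\log(2|\mathcal C|)/r_{\delta}\bigr) \). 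Third, the choice of subinterval index contributes
\[
\prod_{i=1}^{s} r_{i}^{2} = \exp\Bigl(2\sum_{i=1}^{s}\log r_{i}\Bigr) \leq \exp\Bigl(\tfrac{2\log r_{\delta}}{r_{\delta}}\,R\Bigr),
\]
using that \( (\log r)/r \) is decreasing for \( r\geq r_{\delta} \). Each of the three exponents tends to \( 0 \) as \( \delta\to 0 \), so combining gives \( \sharp \mathcal E_{n,R}(\omega)\leq e^{\tilde\eta R} \) with \( \tilde\eta \) as small as desired.

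The main obstacle is the \( \prod r_{i}^{2} \) factor: naively it is super-exponential in the number \( s \) of essential returns, and one might worry that frequent very deep returns would blow the count up. The key observation is that \( r_{i}\geq r_{\delta} \) uniformly, so each \( \log r_{i} \) is negligible compared to \( r_{i} \) and the whole product is absorbed into a tiny exponential in \( R \). A reassuring feature of the estimate is that the final bound is automatic in \( n \): the admissible itineraries depend only on symbolic combinatorial data and not on the actual timing of the returns, which is why the bound \( e^{\tilde\eta R} \) holds uniformly in \( n \).
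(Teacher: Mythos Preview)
Your approach mirrors the paper's exactly: both encode each $\omega'$ by its essential-return itinerary $((c_i,r_i,j_i))_i$ (using that the chopping tree makes this injective), then bound separately the number of depth sequences with $\sum r_i=R$, the number of critical-point choices by $N_c^{s}\leq e^{(R/r_\delta)\log N_c}$, and the subinterval multiplicity by $\prod r_i^{2}\leq e^{(2\log r_\delta/r_\delta)R}$. The only difference is cosmetic: the paper handles the first count via the crude bound $N_{R,s}\leq\binom{R+s}{s}$ with $s\leq R/r_\delta$ and Stirling's formula, rather than a generating-function argument.

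There is, however, one genuine slip in your generating-function step. The equation $x^{r_\delta}=1+x+\cdots+x^{r_\delta-1}$ is the characteristic polynomial for compositions of $R$ into parts \emph{at most} $r_\delta$ (the $r_\delta$-bonacci recurrence), and its dominant root tends to $2$, not $1$, as $r_\delta\to\infty$; so $\rho(r_\delta)^R$ with that $\rho$ does not give a small exponent. For compositions into parts $\geq m=r_\delta+1$ the correct recurrence is $a_R=a_{R-1}+a_{R-m}$, with characteristic equation $x^{m}=x^{m-1}+1$, whose root in $(1,2)$ does tend to $1$ as $m\to\infty$. With this correction (or simply replacing the step by the paper's Stirling estimate) your argument goes through.
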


Lemmas \ref{metestlem} and \ref{combestlem}  immediately give
 \begin{equation}\label{combined}
 |\mathcal E_{n, R}(\omega)| \leq \kappa^{-1}  e^{-(\theta-\tilde\eta) R}
 \end{equation}
 where \( \theta - \tilde\eta > 0 \) if \( \delta \) is sufficiently
 small. We can of course apply these estimates to get
  \(
  |\mathcal E_{n}(\omega)| =  \sum_{R\geq
r_{\delta}}  |\mathcal E_{n, R}(\omega)|
\lesssim \sum_{R\geq
r_{\delta}} e^{-(\theta-\tilde\eta) R}
\lesssim  e^{-(\theta-\tilde\eta) r_{\delta}}
  \)
but this bound is not good enough since it does not give an
 exponential bound in \( n \). We need to show that there is a
 relation between \( n \) and possible values of \( R \).
 This relation is given in the next two lemmas.

 \begin{lemma}\label{noreturns}
     There exists \( n_{\delta}>0 \) depending on \( \delta \), such
     that
 \[
     \omega' \in \mathcal E_{n, 0}(\omega)
\ \text{ implies} \   \omega'=\omega \text{  and }
     n\leq n_{\delta}.
 \]
   \end{lemma}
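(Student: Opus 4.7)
The plan is first to argue $\omega' = \omega$ and then to force $n$ to be bounded by exploiting exponential growth of $|\omega_n|$.

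For the first conclusion, observe that the escape-partition algorithm of Section~\ref{ss:escape times} subdivides the interval only in case (3) of a free iterate, i.e.\ at an essential return. Since $\omega' \in \mathcal E_{n,0}(\omega)$ has an empty sequence of essential returns by time $n$, no subdivision ever occurs along its history, and therefore $\omega' = \omega$.

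For the bound on $n$, I will establish an estimate of the form $|(f^n)'(x)| \geq C_{\delta}\, e^{\eta n}$ with $\eta = \min\{\lambda,\hat\theta\} > 0$ and $C_{\delta} > 0$ depending only on $\delta$. Partition $\{0, 1, \ldots, n\}$ into alternating pure-free blocks (consecutive iterates on which $\omega_j \cap \Delta = \emptyset$) and return-bind blocks $[t_m, t_m+p_m]$ associated to inessential returns. Each pure-free block except possibly a terminal one is bordered on the right by a return iterate, so the sharper form of (H1) (with $x_{s+k}\in\Delta$) yields expansion at least $\kappa e^{\lambda k}$ on a block of length $k$; a terminal free block, if any, contributes at worst an extra factor of $\delta$. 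Each return-bind block contributes expansion at least $e^{\hat\theta(p_m+1)}/\kappa$ by Lemma~\ref{bindexp}. Since free and bind blocks alternate, their counts differ by at most one, so the accumulated $\kappa^{\pm 1}$ factors telescope to a bounded ratio, while the exponents sum to $\lambda\sum k_i+\hat\theta\sum(p_m+1)\geq \eta n$. Combining with the bounded distortion estimate of Proposition~\ref{escapedist} and the Mean Value Theorem gives $|\omega_n|\geq (C_{\delta}/\mathcal D_{\delta})\, e^{\eta n}\,|\omega|$.

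Since $|\omega| \geq \min\{|\Delta^{*}|,\delta/3\}$ is bounded below in terms of $\delta$, there is a threshold $\tilde n_{\delta}$ such that $|\omega_j|>\delta$ for every $j > \tilde n_{\delta}$. If the last free iterate $j^{*}\leq n$ exceeds $\tilde n_{\delta}$, the escape criterion of Section~\ref{ss:escape times} would fire at $j^{*}$, contradicting $\omega'\in\mathcal E_{n,0}(\omega)$; hence $j^{*}\leq\tilde n_{\delta}$. The residual iterates $j^{*}<j\leq n$ lie in a final binding period $[t_q+1,t_q+p_q]$, whose length is bounded by $2\ell r_q/\Lambda$ from~\eqref{bindlen}, and whose return depth satisfies $r_q\leq \log(3/|\omega_{t_q}|)$, itself controlled by the derivative growth above. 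This yields the claimed $n_{\delta}:=\tilde n_{\delta} + 2\ell r_{\max}/\Lambda$. The main technical obstacle is the bookkeeping of constants in the derivative estimate: one has to check that except possibly for a terminal block, every free block admits the stronger form of (H1), so that the $\kappa\delta$ factors do not accumulate and destroy the uniform exponential rate $\eta$; the alternation between free and bind blocks is what makes this work.
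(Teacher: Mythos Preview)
Your proof is correct and follows essentially the same strategy as the paper: the claim $\omega'=\omega$ comes from the fact that subdivision occurs only at essential returns, and the bound on $n$ comes from combining the expansion outside $\Delta$ from (H1) with the binding-period expansion of Lemma~\ref{bindexp} to get exponential growth at rate $\hat\lambda=\min\{\lambda,\hat\theta\}$, then applying the Mean Value Theorem with $|\omega_n|\leq\delta$ and $|\omega|\geq\delta/3$.

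The only organizational difference worth noting is in the treatment of the case where $n$ falls inside a binding period. The paper estimates the derivative through the incomplete binding period directly, using the bounded distortion during binding together with the growth rate $\Lambda$ from (H2), which introduces the auxiliary constant $\tilde\lambda=\min\{\lambda,\hat\theta,\Lambda\}$. You instead bound the \emph{length} of the terminal binding period via \eqref{bindlen}, after first bounding the return depth $r_q$ using the lower bound on $|\omega_{t_q}|$ coming from the growth estimate. Both routes work; yours is arguably a little cleaner since it avoids the extra constant. One small remark: the appeal to Proposition~\ref{escapedist} for distortion is unnecessary here, since the derivative lower bound already holds pointwise and the Mean Value Theorem alone gives $|\omega_n|\geq C_\delta e^{\eta n}|\omega|$.
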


 \begin{lemma}\label{escape}
 For all \( n\geq 1 \) and \( R\geq 1 \) such that
 \( \mathcal E_{n, R} \neq
 \emptyset \) we have
 \[
 R  \geq  (n-n_{\delta})/\tilde\theta.
 \]
 \end{lemma}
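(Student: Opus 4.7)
The plan is to decompose $n = \nu_1 + \sum_{i=1}^{s-1}(\nu_{i+1}-\nu_i) + (n-\nu_s)$ and bound each piece by an affine function of the relevant essential return depth.

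For the initial piece, Lemma~\ref{noreturns} applied to $\omega$ on $[0,\nu_1-1]$ (on which no essential return has yet occurred) immediately gives $\nu_1 \leq n_\delta+1$.

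For an intermediate piece $[\nu_i,\nu_{i+1}]$, I would work with the element $\omega^{(i)}$ of the chopping at the $i$-th essential return that contains $\omega'$. Since $I_{r_i,j_i}\subset f^{\nu_i}(\omega^{(i)})$, we have $|f^{\nu_i}(\omega^{(i)})|\gtrsim e^{-r_i}/r_i^2$, and Lemma~\ref{bindexp} together with Proposition~\ref{escapedist} upgrades this to $|f^{\nu_i+p_i+1}(\omega^{(i)})|\gtrsim e^{(\theta-1)r_i}/r_i^2$ at the end of the binding period, whose length $p_i$ is itself bounded by $2\ell r_i/\Lambda$. From $\nu_i+p_i+1$ to $\nu_{i+1}$, only free iterates outside $\Delta$ and inessential-return binding blocks occur; applying (H1) on the free subsegments and Lemma~\ref{bindexp} on the binding blocks yields an aggregate expansion $\gtrsim e^{\lambda_*(\nu_{i+1}-\nu_i-p_i-1)}$ with $\lambda_* = \min\{\lambda,\hat\theta\}>0$, since in this alternating pattern the $\kappa$-factors from the two estimates cancel up to a single overall constant (there is exactly one more free subsegment than binding subsegment). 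Combined with Proposition~\ref{escapedist} and the no-escape bound $|f^{\nu_{i+1}}(\omega^{(i)})|\leq\delta$, this forces $\nu_{i+1}-\nu_i \leq C_1 + C_2 r_i$ for some constants $C_1,C_2>0$ independent of $i$. An analogous argument applied to the tail gives $n-\nu_s \leq C_1+C_2 r_s$.

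Summing these bounds and using $s \leq R/r_\delta$ (each $r_i\geq r_\delta$) yields $n \leq (n_\delta+1) + sC_1 + C_2 R \leq n_\delta+1+\tilde\theta R$ for a suitable $\tilde\theta>0$, which is the desired inequality after absorbing the additive $+1$ into $n_\delta$. The main delicate point is verifying the uniform exponential growth rate $\lambda_*$ between consecutive essential returns despite arbitrarily many intervening inessential returns; this rests on the $\kappa$-bookkeeping described above, which works precisely because free and binding subsegments strictly alternate in the interval $[\nu_i+p_i+1,\nu_{i+1}]$.
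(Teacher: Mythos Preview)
Your argument is correct and follows essentially the same strategy as the paper: bound $\nu_1$ via Lemma~\ref{noreturns}, then bound each gap $\nu_{i+1}-\nu_i$ (including the tail $n-\nu_s$) linearly in $r_i$ by playing the uniform expansion between consecutive essential returns against the fact that the image stays below size $\delta$. The paper phrases this via two sublemmas giving upper and lower bounds on $\inf_{x\in\omega'}|(f^{\nu_{i+1}-\nu_i})'(x)|$ (the lower bound being exactly your $\lambda_*=\hat\lambda=\min\{\lambda,\hat\theta\}$ expansion, obtained by the same alternation of (H1) and Lemma~\ref{bindexp}), whereas you track interval lengths directly; these are equivalent via the mean value theorem. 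The only cosmetic difference is that the paper folds the binding period of the essential return itself into the expansion estimate and obtains $\nu_{i+1}-\nu_i\le\tilde\theta r_i$ with no additive constant, while you separate out $p_i\le 2\ell r_i/\Lambda$ and carry an additive $C_1$ that you then absorb through $s\le R/r_\delta$ --- but since each $r_i\ge r_\delta$, the paper's route is just the same absorption done one step earlier.
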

Lemmas \ref{noreturns} and \ref{escape} and equation
\eqref{combined} give, for \( n> n_{\delta} \) and some constant \(
\tilde C_{1}>0 \),
\[
 |\mathcal E_{n}(\omega)|  =
 \sum_{R\geq (n-n_{\delta})/\tilde\theta}  |\mathcal E_{n, R}(\omega)|
\leq \sum_{R\geq (n-n_{\delta})/\tilde\theta} e^{-(\theta-\tilde\eta) R}
\leq \tilde C_{1} e^{\frac{(\theta-\tilde\eta) n_{\delta}}{\tilde\theta}}
e^{-\frac{\theta-\tilde\eta}{\tilde\theta} n}.
\]
Finally we  multiply the right hand side by \(
|\Delta^{*}|/\delta^{*} \) or \( |\omega|/\delta \) to get the
statement in Proposition \ref{basetail} with \(
 \gamma_{1}= (\theta-\tilde\eta)/\tilde\theta \). In the next four
 subsections we prove the four Lemmas above.

 \subsection{Escapes with essential returns: metric estimates}
 \begin{proof}[Proof of Lemma \ref{metestlem}]
 Let \( \omega'\in\mathcal E_{n,R} \)
 with an associated sequence \( \nu_{1},\ldots,\nu_{s} \) of essential
 returns corresponding return depths
 \( r_{1},\ldots, r_{s} \). Then, by construction we have
 \[
 |\omega'_{\nu_{s}}| \leq e^{-r_{s}}.
 \]
 Moreover, using the expansion after the binding
 periods from Lemma \ref{bindexp} and the expansion during the free
 periods (H1), including the fact that each free period under
 consideration ends in a return to \( \Delta \), we have
 \[
 |(f^{\nu_{1}})'(x)|\geq \kappa e^{\lambda \nu_{1}} \geq \kappa
 \]
 for all \( x\in \omega' \) and
 \[
 |(f^{\nu_{i+1}-\nu_{i}})'(x_{\nu_{i}})|\geq e^{\theta r_{i}}
 \]
 for all \( i=1,\ldots, s-1 \) and \( x_{\nu_{i}}\in \omega'_{\nu_{i}}
 \).
 This gives
 \[
 |(f^{\nu_{s}})'(x)|\geq \kappa e^{\theta (r_{1}+\dots + r_{s-1})}
 \]
for all \( x\in\omega' \). Thus, by the mean value theorem we have
\[
|\omega'| \leq \kappa^{-1} e^{-\theta (r_{1}+\dots + r_{s-1})}
|\omega'_{\nu_{s}}| \leq \kappa^{-1} e^{-\theta (r_{1}+\dots + r_{s})}.
\]
 \end{proof}

 \subsection{Escapes with essential returns: combinatorial estimates}
 \label{combest}

 \begin{proof}[Proof of Lemma \ref{combestlem}]
 We divide the proof into two steps.
The first one is purely combinatorial and bounds the number of
theoretically possible combinatorially distinct elements. The second
one relies on the construction and bounds the number of possible
elements with the same combinatorics.

 \subsubsection*{Cardinality of possible sequences}

 Let $N_{R,s}$ denote the number of integer sequences
 $(t_1,\ldots,t_s)$, $t_i\geq r_{\delta}$ for all $i$, $1\leq i \leq s$, such
 that $\sum_{i=1}^{s}t_i = R$. The number \( N_{R,s} \) of such
 sequences is the same
 as the number of ways to choose $s$ balls from a row of $k+s$
 balls, thus partitioning the remaining $k$ balls into at most $s+1$
 disjoint subsets.  Therefore we have
 \[
 N_{R,s}\leq\begin{pmatrix}
 R+s\\ s
 \end{pmatrix}
 =\begin{pmatrix} R+s\\ R
 \end{pmatrix}.
 \]
Since each term \( t_{i}\geq r_{\delta} \) we must always have \( s\leq
  R/r_{\delta} \), and, since the right hand side above  is monotonically
 increasing in $s$,  writing \( \eta=1/r_{\delta} \) for simplicity, we
 get
 $$
 N_{R,s}\leq
 \begin{pmatrix}
 (1+\eta)R \\ R
 \end{pmatrix}
 =\frac{[(1+\eta)R]!}{(\eta R)! R!}.
 $$
 Using Stirling's  formula
 $k!\in [1,1+\frac{1}{4k}]\sqrt{2\pi k}k^{k}e^{-k}$
 we obtain
 \begin{align*}
 N_{R,s}
 &\leq\frac{[(1+\eta)R]^{(1+\eta)R}}{(\eta R)^{\eta R}R^R}
 =(1+\eta)^{(1+\eta)R}\eta^{-\eta R}\\
 & \leq \exp\{(1+\eta)R\log(1+\eta) -\eta R\log\eta\}
 \leq \exp\{((1+\eta)\eta -\eta \log\eta)R\}=e^{\eta_{1} R}
 \end{align*}
 where $\eta_{1} =
 \bigl((1+\eta)\eta -\eta \log\eta\bigr)$ can be made arbitrarily small
 with \( \delta \).
Now, letting \( N_{R} \) denote the number of all possible integer sequences
$(t_1,\ldots,t_s)$, $t_i\geq r_{\delta}$ for all $i$, $1\leq i \leq s$, such
that $\sum_{i=1}^{s}t_i = R$, for all possible values of \( s \) we
have
  \begin{equation}\label{eqsum}
  N_{R}= \sum_{s=1}^{\eta R} N_{R,s}\leq
  \sum_{s=1}^{\eta R} e^{\eta_{1} R} =
  \eta R
  e^{\eta_{1} R}.
  \end{equation}

 \subsubsection*{Multiplicity of intervals sharing the same sequence}
 From the
 subdivision procedure described in Section \ref{escape_part},
 the only way in which several elements can share the same
 combinatorics is by having the same sequence of return depth
 $r_1,r_2,\ldots, r_s$ but being associated to different critical
 points and to different intervals \(
 I_{r_{i}, j} \) (recall that each element \( I_{r} \) in the critical
 partition is subdivided into \( r^{2} \) subintervals of equal
 length).
 Therefore, letting \( N_{c} \) denote the number of (one-sided)
 critical points and \( \mathcal E_{n, r_{1},\ldots, r_{s}} \)
 the set of  escaping intervals having the same given
sequence \( r_{1},.., r_{s} \)
 of return depths, we have
 \[
\sharp \mathcal E_{n, r_{1},\ldots, r_{s}}\leq
\prod_{j=1}^{s} N_{c} r^{2}_{j}.
 \]
To get an upper bound for the right hand side we take logs
and obtain
 \begin{equation}\label{eqprod}
\log\prod_{j=1}^{s} N_{c} r^{2}_{j}
 =\sum_{j=1}^{s} \log N_{c} r^{2}_{j}
=s\log N_{c} + \sum_{j=1}^{s} 2\log r_{j}.
 \end{equation}
 Since \( R\geq s r_{\delta} \) we have
 \begin{equation}\label{eqprod1}
 s\log N_{c} \leq \frac{\log N_{c}}{r_{\delta}} R = \eta_{2}R
 \end{equation}
 where \( \eta_{2} \) can be made arbitrarily small by taking \(
 \delta \) small. Moreover since \( r_{j}\geq r_{\delta} \) we also have
 \begin{equation}\label{eqprod1a}
 \frac{2 \log r_{j}}{r_{j}} \leq \frac{2 \log r_{\delta}}{r_{\delta}} =
 \eta_{3}
 \end{equation}
 and therefore
 \begin{equation}\label{eqprod2}
 \sum_{j=1}^{s} 2\log r_{j} \leq \eta_{3}\sum_{j=1}^{s} 2 r_{j} =
 \eta_{3} R.
 \end{equation}
 where \( \eta_{3} \)
 can be made arbitrarily small by taking  $\delta$ small.
 Substituting \eqref{eqprod1} and \eqref{eqprod2} into \eqref{eqprod} gives
 \[
 \log\prod_{j=1}^{s} N_{c} r^{2}_{j} \leq (\eta_{2}+\eta_{3}) R
 \]
 or
 \begin{equation}\label{eqprod3}
     \sharp\{\mathcal E_{n, r_{1},\ldots, r_{s}}\} \leq
\prod_{j=1}^{s} N_{c} r^{2}_{j} \leq e^{(\eta_{2}+\eta_{3}) R}
 \end{equation}

\subsubsection*{Final estimate}
The final upper bound for the number of possible intervals in \(
\mathcal E_{n, R}(\omega) \) is therefore just given by multiplying
the number of possible sequences with a given total return depth \( R \)
by the number of possible partition elements which can potentially
have exactly such a value as their total return depth. Thus,
multiplying  \eqref{eqsum} by \eqref{eqprod3} we get
 $$
 \sharp\{\mathcal E_{n, R}(\omega) \}\leq
\eta Re^{(\eta_{1} +\eta_{2}+ \eta_{3} )R}
 \leq e^{(\eta_{1}+\eta_{2}+\eta_{3})R+
 \log\eta + \log R }
 \leq e^{(\eta_{1}+\eta_{2}+2\eta_{3})R}.
 $$
 In the last inequality we have used the fact that \( \log \eta < 0 \)
 since \( \eta = 1/r_{\delta} \) is small, and that \( \log R <
 \eta_{3} R \) from \eqref{eqprod1a} and the fact that \( R\geq
 r_{\delta} \). Thus we get the statement in Lemma \ref{combestlem}
 with \( \tilde\eta = \eta_{1}+\eta_{2}+2\eta_{3} \) where \(
 \tilde\eta \) can be made arbitrarily small if \( \delta \) is chosen
 small enough.

 \end{proof}

 \subsection{Escapes with no essential returns}
 \begin{proof}[Proof of Lemma \ref{noreturns}]
 First we show that     \( \omega'=\omega \) and then estimate \(
 n_{\delta} \). For convenience we introduce here a couple of
 constants which will be used here and in the next subsection. Let
 \[
 \hat\lambda= \min\{\lambda, \hat\theta\}
 \quad \text{ and }
 \quad \tilde{\lambda} = \min\{\lambda, \hat\theta, \Lambda\}.
 \]

\subsubsection*{Claim: \( \omega'=\omega \)}
This  follows directly from the
construction. Indeed, suppose by contradiction
that \( \omega'\subset\omega \). This
would mean that \( \omega \) had been
chopped at some time \( \nu \leq  n \) for which \(
\omega_{\nu}\cap\Delta\neq\emptyset \). However, at this time all
subintervals of \( \omega \) arising from this chopping procedure
would qualify as having had an essential return. This is true even if \(
\omega_{\nu} \) is not strictly contained in \( \Delta \) since either
the components of \( \omega_{\nu}\setminus\Delta \) are smaller than
\( \delta \) and are therefore, by construction, attached to their
adjacent elements which fall inside \( \Delta \), or they are bigger
than \( \Delta \) which implies that \( |\omega_{\nu}|\geq \delta \)
implying that \( \nu \) is an escape time for \( \omega \). This
contradicts our assumption that \( \omega' \) does not escape before
time \( n \). Thus \( \omega'=\omega \).

\subsubsection*{No inessential returns}
To obtain a bound for \( n \) we suppose first of all that there are
no (inessential) returns, i.e. \( \omega \) stays outside \( \Delta \)
up to time \( n \). Then, by conditions (H1) we have
\( |(f^{n})'(x)|\geq \kappa\delta e^{\lambda n}  \) and so, by the
mean value theorem and using the fact that \( |\omega|\geq \delta/3 \)
and that \( |\omega_{n}|\leq \delta \) we get
\[
\delta \geq |\omega_{n}|\geq \kappa\delta e^{\lambda n} |\omega|
\geq \kappa\delta^{2} e^{\lambda n}/3.
\]
This gives \( e^{\lambda n}\leq \frac{3}{\kappa\delta} \) and, solving
for \( n \) we obtain
\( n\leq \frac{1}{\lambda}\log \frac{3}{\kappa\delta}. \)

\subsubsection*{If \( n \) is a free iterate}
If there are any returns to \( \Delta \) we distinguish two further
cases. If \( n \) is a free iterate, then we can combine the binding
period expansion estimates and condition (H1) to get
\(  |(f^{n})'(x)|\geq \kappa\delta e^{\hat\lambda n}  \).
Then reasoning
exactly as above we get   \( n\leq \frac{1}{\hat \lambda}
\log \frac{3}{\kappa\delta} \).

\subsubsection*{If \( n \) is a bound iterate}
If \( n \) is not a free iterate we have the additional minor
complication of not being able to use the binding period estimate
for the last incomplete binding period. Let \( \nu<n \) be the last
inessential return before \( n \) such that \( n \) belongs to the
binding period which follows the return at time \( \nu \). Then, by
the calculation above we have
\(  |(f^{\nu})'(x)|\geq \kappa\delta e^{\hat\lambda \nu}  \) and, in
particular,
\[
|\omega_{\nu}|\geq  \kappa\delta^{2} e^{\hat\lambda \nu}/3
\geq  \kappa\delta^{2}/3.
\]
Therefore, for the return at time \( \nu \) to be inessential it
cannot be too deep, i.e. it must have a return depth \( r \leq \log
\kappa\delta^{2}/3 \), and therefore the derivative at points \(
x_{\nu}\in\omega_{\nu} \) must be of the order of \( e^{r}\approx
\kappa\delta^{2}/3  \). Therefore \(  |(f^{\nu+1})'(x)| \gtrsim
\delta^{3} e^{\hat\lambda \nu} e^{r} \kappa/3   \). During the
remaining iterates, the bounded distortion during the binding
periods implies that the derivative is growing exponentially fast at
rate \( \Lambda \), from condition (H2). We therefore have \(
|(f^{n})'(x)|\gtrsim\delta ^{3} e^{\tilde{\lambda} n}\). Arguing
once again as in the previous case we then have
\[
\delta \geq |\omega_{n}|\gtrsim \delta^{3} e^{\tilde{\lambda} n}
|\omega| \gtrsim \delta^{4} e^{\lambda n}.
\]
Solving for \( n \) once again we get the result.
 \end{proof}

\subsection{Escape times and return depths}

\begin{proof}[Proof of Lemma \ref{escape}]
Let \( \omega'\in\mathcal E_{n, R}\) and let
\( \nu_{1},\ldots, \nu_{s} \)
    be the sequence of essential returns, \(
    r_{1},\ldots, r_{s} \) the corresponding sequence of return
    depths, and \( p_{1}, \ldots, p_{s} \) the
corresponding binding periods. To simplify the notation we let
\( \nu_{s+1}:=n \).

Lemma \ref{noreturns}  implies
\( \nu_{1}\leq n_{\delta} \) and therefore it is sufficient to prove
that there exists  a constant \(
\tilde\theta \) such that for all  \( i=1,\ldots, s \) we have
\begin{equation}\label{consret}
\nu_{i+1}-\nu_{i} \leq \tilde\theta r_{i}.
\end{equation}
Indeed, this immediately gives \( n\leq n_{\delta}+ \tilde\theta R \)
which is equivalent to the statement in the Lemma.

\begin{sublemma} For all  \( i=1,\ldots, s \) we have
    \begin{equation}\label{derest}
    \inf_{x\in\omega'} |(f^{\nu_{i+1}-\nu_{i}})'(x)|  \leq \delta
    e^{r_{i}} r_{i}^{2}.
    \end{equation}
\end{sublemma}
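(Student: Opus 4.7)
The plan is a Mean Value Theorem argument applied not to $\omega'$ itself but to the subdivision interval $\omega_{r_{i},j_{i}}\supset\omega'$ produced at the $i$-th essential return: the essential return condition in Section~\ref{escape_part} only gives geometric control at that larger scale. Between times $\nu_{i}$ and $\nu_{i+1}$ there are no further subdivisions of $\omega_{r_{i},j_{i}}$ (by hypothesis on the sequence of essential returns for $\omega'$), so $f^{\nu_{i+1}-\nu_{i}}$ is a diffeomorphism on $f^{\nu_{i}}(\omega_{r_{i},j_{i}})$, and the MVT gives
\[
\inf_{\xi\in f^{\nu_{i}}(\omega_{r_{i},j_{i}})}\bigl|(f^{\nu_{i+1}-\nu_{i}})'(\xi)\bigr|\;\leq\;\frac{|f^{\nu_{i+1}}(\omega_{r_{i},j_{i}})|}{|f^{\nu_{i}}(\omega_{r_{i},j_{i}})|}.
\]
The bounded distortion statement of Proposition~\ref{escapedist}, applied on the larger interval, then transfers this bound, up to a harmless absolute constant, to the infimum over the smaller set $f^{\nu_{i}}(\omega')$, which is the natural reading of $\inf_{x\in\omega'}$ in the statement.

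It remains to bound the two sides of the ratio. The denominator is immediate from the essential return construction: one has $I_{r_{i},j_{i}}\subset f^{\nu_{i}}(\omega_{r_{i},j_{i}})$, so
\[
|f^{\nu_{i}}(\omega_{r_{i},j_{i}})|\;\geq\;|I_{r_{i},j_{i}}|\;=\;(e-1)\,e^{-r_{i}}/r_{i}^{2}.
\]
For the numerator I split on cases. When $i<s$, the time $\nu_{i+1}$ is itself an essential return for $\omega_{r_{i},j_{i}}$, and the algorithm of Section~\ref{escape_part} performs the escape test before any subdivision: had $|f^{\nu_{i+1}}(\omega_{r_{i},j_{i}})|\geq\delta$, the interval would have been declared as having escaped rather than subdivided, so $|f^{\nu_{i+1}}(\omega_{r_{i},j_{i}})|<\delta$. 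When $i=s$, so $\nu_{s+1}=n$, the non-escape hypothesis at time $n$ yields $|f^{n}(\omega_{r_{s},j_{s}})|<\delta$ at any free iterate; the residual case where $n$ lies inside a binding period of $\nu_{s}$ is handled by the binding-period distortion~\eqref{binddist} together with the monotone growth of interval length during binding, which reduces the estimate to the bound at the next free iterate up to a constant factor.

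Combining the two bounds yields
\[
\inf_{x\in\omega'}\bigl|(f^{\nu_{i+1}-\nu_{i}})'(x)\bigr|\;\lesssim\;\frac{\delta}{e^{-r_{i}}/r_{i}^{2}}\;=\;\delta\, r_{i}^{2}\, e^{r_{i}},
\]
as required; any bounded proportionality constant can be absorbed into the implicit constants or into a slight adjustment of $\tilde\theta$ downstream in Lemma~\ref{escape}. The only genuinely subtle point is the bookkeeping for $i=s$ when $n$ sits inside a binding period of $\nu_s$, which I expect to be the main obstacle, but this is routine given the binding-period and distortion machinery already set up in Sections~\ref{sec_bind} and~\ref{distest}.
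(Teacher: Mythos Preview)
Your approach is essentially the paper's: apply the Mean Value Theorem to the larger interval $\hat\omega'=\omega_{r_i,j_i}\supset\omega'$ created at the essential return $\nu_i$, bound $|\hat\omega'_{\nu_{i+1}}|$ above by $\delta$ via non-escape and $|\hat\omega'_{\nu_i}|$ below by $|I_{r_i,j_i}|\geq e^{-r_i}/r_i^2$. The paper neither invokes Proposition~\ref{escapedist} to transfer the bound from $\hat\omega'$ to $\omega'$ (in effect its proof establishes the inequality for $\inf_{x\in\hat\omega'}$, which is all that is needed since the companion lower bound also holds on $\hat\omega'$) nor singles out the case $i=s$ with $n$ a bound iterate; your extra care on these two points only affects harmless constants.
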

\begin{proof}
    Since \( \nu_{i} \) is an essential return there
    exists an interval \( \hat\omega' \) with
    \( \omega'\subseteq\hat\omega'\subseteq \omega \) such that
    the image \( \hat\omega'_{\nu_{i}} \) at time \( \nu_{i} \)
    contains a unique interval $I_{r,j}=I_{r_i,j_i}$ and therefore
    \( |\hat\omega'_{\nu_{i}} |\geq e^{-r_{i}}/r_{i}^{2} \).
    Since we are assuming here that \( \hat\omega' \) does not escape
    before or at time \( \nu_{i+1} \) we also have
    \( \delta \geq |\hat\omega'_{\nu_{i+1}}|  \). Therefore, by the
     mean value theorem we have
    \[
    \delta \geq |\hat\omega'_{\nu_{i+1}}| \geq
    \inf_{x\in\hat\omega'} |(f^{\nu_{i+1}-\nu_{i}})'(x)|
    \ | \hat\omega'_{\nu_{i}}|
    \geq \inf_{x\in\hat\omega'} |(f^{\nu_{i+1}-\nu_{i}})'(x)|
    e^{-r_{i}}/r_{i}^{2}
    \]
    which gives \eqref{derest}.
 \end{proof}

\begin{sublemma} For all  \( i=1,\ldots, s \) we have
    \begin{equation}\label{growth}
\inf_{x\in\omega'} |(f^{\nu_{i+1}-\nu_{i}})'(x)| \geq
\delta e^{\hat\lambda (\nu_{i+1}-\nu_{i})}.
\end{equation}

\end{sublemma}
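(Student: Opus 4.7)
The plan is to decompose the iterates between the two consecutive essential returns $\nu_i$ and $\nu_{i+1}$ into an alternating sequence of binding blocks and free blocks, and then to combine the binding-period expansion of Lemma \ref{bindexp} with the outside-$\Delta$ expansion from (H1). Since $\hat\lambda=\min\{\lambda,\hat\theta\}$ by the definition just introduced, the overall exponential growth rate coming out of this combination is at least $\hat\lambda$, which is exactly what the sublemma requires.

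Concretely, let $\mu_1<\cdots<\mu_k$ (with $k\geq 0$) denote the inessential returns of $\omega'$ occurring in $(\nu_i,\nu_{i+1})$, with return depths $s_1,\ldots,s_k$ and associated binding periods of lengths $q_1,\ldots,q_k$; set also $\mu_0:=\nu_i$, $s_0:=r_i$, $q_0:=p_i$ and $\mu_{k+1}:=\nu_{i+1}$. Then the iterates $\nu_i,\nu_i+1,\ldots,\nu_{i+1}-1$ partition as a disjoint union of $k+1$ binding blocks $[\mu_j,\mu_j+q_j]$ and $k+1$ (possibly empty) free blocks $[\mu_j+q_j+1,\mu_{j+1}-1]$.

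On each binding block, Lemma \ref{bindexp} applied at $x_{\mu_j}\in\hat I_{s_j}$ gives $|(f^{q_j+1})'(x_{\mu_j})|\geq\kappa^{-1}e^{\hat\theta(q_j+1)}$. On each nonempty free block of length $n_j=\mu_{j+1}-\mu_j-q_j-1$, every iterate lies outside $\Delta$ while $x_{\mu_{j+1}}\in\Delta$, so the strong form of (H1) applies and gives $|(f^{n_j})'(x_{\mu_j+q_j+1})|\geq\kappa e^{\lambda n_j}$. Multiplying these uniform lower bounds via the chain rule, the alternating $\kappa^{-1}$ and $\kappa$ factors cancel in pairs (with at most one uncancelled $\kappa^{-1}$), while the exponents add up to at least $\hat\theta\sum_j(q_j+1)+\lambda\sum_j n_j\geq\hat\lambda(\nu_{i+1}-\nu_i)$. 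This yields the uniform lower bound $\kappa^{-1}e^{\hat\lambda(\nu_{i+1}-\nu_i)}\geq\delta e^{\hat\lambda(\nu_{i+1}-\nu_i)}$, the last inequality using $\delta\leq\kappa^{-1}$ for sufficiently small $\delta$.

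I do not anticipate any substantive obstacle; the argument really is just careful block-by-block bookkeeping. The one detail worth double-checking is the hypothesis $x_{\mu_j}\in\hat I_{s_j}$ needed to legitimately apply Lemma \ref{bindexp} at each essential or inessential return, which is immediate from the definitions of return and return depth in Section \ref{escape_part}, together with the fact that $\omega'\subseteq\omega$ has its image at these returns lying in a bounded number of adjacent elements of the critical partition.
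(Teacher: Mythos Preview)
Your approach is exactly the paper's: decompose $[\nu_i,\nu_{i+1})$ into alternating binding blocks (handled by Lemma~\ref{bindexp}) and free blocks (handled by (H1)), then multiply. The paper's proof says precisely this in one sentence.

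There is one small but genuine oversight. You assert that on every free block the endpoint satisfies $x_{\mu_{j+1}}\in\Delta$, so that the \emph{strong} form of (H1) applies. For $j<k$ this is fine (each $\mu_{j+1}$ is a return), and for $j=k$ with $i<s$ it is fine ($\nu_{i+1}$ is an essential return). But for $i=s$ you have set $\mu_{k+1}=\nu_{s+1}:=n$, and $n$ is just the time at which we stop; it need not be a return at all, so $x_n$ need not lie in $\Delta$. In that case only the weak form of (H1) is available on the last free block, contributing $\kappa\delta e^{\lambda n_k}$ rather than $\kappa e^{\lambda n_k}$. This is exactly the source of the factor $\delta$ in the stated inequality, and the paper makes this explicit: ``The factor $\delta$ in the second inequality is due to the fact that $n$ is not necessarily a return.'' Your justification ``$\delta\leq\kappa^{-1}$ for sufficiently small $\delta$'' is therefore not the real reason the $\delta$ appears; for $i<s$ you actually get the stronger bound $e^{\hat\lambda(\nu_{i+1}-\nu_i)}$ with no prefactor needed, while for $i=s$ the $\delta$ is forced by the weak (H1). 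The fix is a one-line case distinction, but as written your argument does not cover $i=s$ correctly. (A related edge case you also do not mention: $n$ could fall inside a binding period, so the last binding block may be incomplete; this is handled by the distortion bound \eqref{binddist} together with (H2), as in the analogous case in the proof of Lemma~\ref{noreturns}.)
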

\begin{proof}
    Let \( \rho_{1},
    \rho_{2},\ldots, \rho_{t(i)} \) denote the sequence of return depths
    corresponding to inessential returns occurring between
    \( \nu_{i} \) and \( \nu_{i+1} \) and \( p_{1}, \ldots, p_{t(i)} \)
    the corresponding binding periods. Then, combining Lemma \ref{bindexp}
    and the expansion (H1) during free iterates, keeping in mind that
    every free iterate under consideration ends in \( \delta \), we have
\( |(f^{\nu_{i+1}-\nu_{i}})'(x)| \geq
e^{\hat\lambda (\nu_{i+1}-\nu_{i})}  \) for all \( i= 1,\ldots, s-1 \)
and \( |(f^{\nu_{i+1}-\nu_{i}})'(x)| \geq
\delta e^{\hat\lambda (\nu_{i+1}-\nu_{i})}  \) for \( i=s \).
     The factor \( \delta \) in the second
    inequality is due to the fact that \( n \) is not necessarily a return.
 \end{proof}

Returning to the proof of the Lemma, we combine
 \eqref{growth} into \eqref{derest} to get
 \[
 e^{r_{i}} r_{i}^{2}\geq  e^{\hat\lambda (\nu_{i+1}-\nu_{i})}
 \]
 and solving for \(
\nu_{i+1}-\nu_{i} \) gives
\[
\nu_{i+1}-\nu_{i} \leq \frac{1}{\hat\lambda} (r_{i}+2\log r_{i})
\leq \frac{1+\eta_{3}}{\hat\lambda} r_{i}
\]
where \( \eta_{3} \) is the constant in \eqref{eqprod1a}.
Thus the Lemma follows with \( \tilde\theta = (1+\eta_{3})/\hat\lambda \).
\end{proof}

\section{Return Time Estimates}\label{ret_time}
We are now ready to prove Proposition \ref{main_tail}. We remark that
from this point onwards we shall \emph{consider \( \delta \) to be
fixed once and for all}. In particular we shall make use of the
distortion constant \( \mathcal D_{\delta} \) from Proposition
\ref{escapedist}; this constant depends on \( \delta \) but this will
not cause any problems as we shall not impose any additional
conditions on the size of \( \delta \).

We proceed initially as in the proof of the analogous estimate for the
escape time, although the argument here is more probabilistic because
we do not have such sharp control of the combinatorics.
We consider a fixed  \( n\geq 1 \) and recall the definition of
\[
\mathcal{Q}^{(n)}=\{\omega: T(\omega)>n\}
 \]
as the set of intervals which have not yet had a full return at
 time \( n\).
By construction, each \( \omega\in \mathcal Q^{(n)} \)
is contained in a nested sequence of intervals
\[
\omega\subset\omega^{(s)}\subset\omega^{(s-1)}
\subset\ldots\subset\omega^{(1)} \subset \Delta^{*}
\]
corresponding to escape times $E_1,\ldots,E_s$, such that \(
|f^{E_i}(\omega^{(i)})|\geq\delta \) for  \( i=1\ldots, s \). This
sequence is empty for those elements of \( \mathcal Q^{(n)} \) which
have not had any escape before time \( n \) (such as those which
start very close to the critical point). For \( s = 0,\ldots, n \)
(clearly there cannot be more than \( n \) escapes) we let \(
\mathcal{Q}^{(n)}_s \) denote the collection of intervals in \(
\mathcal{Q}^{(n)} \) which have exactly \( s \) escapes before time
\( n \). Then for   a constant $\zeta\in (0,1)$ whose value will be
determined below, we write
\begin{equation}\label{set_split}
|\Q^{(n)}| =\sum_{s\leq n}|\Q_s^{(n)}|= \sum_{s\leq \zeta
n}|\Q_s^{(n)}| + \sum_{\zeta n<s \leq  n}|\Q_s^{(n)}|.
\end{equation}
This corresponds to distinguishing those intervals which
have had lots of escape times and those
that have had only a few . The
Proposition then follows immediately from the following two lemmas.

 \begin{lemma}\label{fewescapes} There exists a constant \( C_{3}>0 \)
     depending on \( \delta \) and \( \delta^{*} \) and a constant \(
     \gamma_{3}>0 \) independent of \( \delta  \) and \( \delta^{*} \)
     such that, for all \( \zeta>0 \) sufficiently small and all \(
     n\geq 1 \) we have
\[     \sum_{0\leq s\leq \zeta n}|\Q_s^{(n)}|  \leq C_{3}
e^{-\gamma_{3} n} |\Delta^{*}|.
\]
  \end{lemma}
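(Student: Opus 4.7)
The plan is to iterate Proposition~\ref{basetail} through the successive escapes, using the bounded distortion of Proposition~\ref{escapedist} to pull back the tail estimates, and then use the combinatorial bound on the number of ways to split $n$ into escape-gap intervals. The key observation is that, in order to belong to $\mathcal Q_s^{(n)}$, a point must survive $s+1$ independent "no-escape'' events whose total duration sums to $n$; each individual event has exponentially decaying probability thanks to Proposition~\ref{basetail}, so the product is exponentially small in $n$, and the combinatorial cost of choosing the way $n$ is partitioned is subexponential once $s \leq \zeta n$ with $\zeta$ small.

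Concretely, for any starting interval $I$ of size comparable to $\delta$ (or $I = \Delta^{*}$) let $\tilde q_s(n;I) = |\mathcal Q_s^{(n)} \cap I| / |I|$, and let $Q_s(n) = \sup_{I} \tilde q_s(n;I)$. I would first write down the base case: by Proposition~\ref{basetail},
\[
Q_0(n) \le C_1 e^{-\gamma_1 n}.
\]
For the inductive step I would decompose $\mathcal Q_s^{(n)}\cap I$ according to the first escape partition element $\omega'\subset I$ and its escape time $\tau=E(\omega')$. After that escape, $f^{\tau}(\omega')$ splits (Lemma~\ref{retpart}) into $L\cup M\cup R$; points in $\mathcal Q_s^{(n)}$ must land in $L$ or $R$ and then accumulate exactly $s-1$ further escapes in the remaining $n-\tau$ iterates. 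Applying Proposition~\ref{escapedist} to pull this measure back to $\omega'$, and then using that $\sum_{\omega':\,E(\omega')=\tau}|\omega'|$ is bounded by the measure of non-escape up to time $\tau-1$, i.e.\ by $C_1 e^{-\gamma_1(\tau-1)}|I|$, I would obtain the convolution-type inequality
\[
Q_s(n)\;\le\; K\sum_{\tau=1}^{n} e^{-\gamma_1\tau}\,Q_{s-1}(n-\tau),
\]
with $K = 2\mathcal D_\delta C_1 e^{\gamma_1}$. Iterating $s$ times gives
\[
Q_s(n)\;\le\; K^{s} C_1 \,\binom{n}{s}\, e^{-\gamma_1 n},
\]
and hence
\[
|\mathcal Q_s^{(n)}|\;\le\; K^{s} C_1 \binom{n}{s} e^{-\gamma_1 n}\,|\Delta^{*}|.
\]

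To finish, I would sum over $0\le s\le \zeta n$ and use Stirling's formula in the form $\binom{n}{\zeta n}\le e^{H(\zeta)n}$, where the entropy $H(\zeta)=-\zeta\log\zeta-(1-\zeta)\log(1-\zeta)\to 0$ as $\zeta\to 0$. Combined with $K^{\zeta n}=e^{(\zeta\log K)n}$ this yields
\[
\sum_{s\le \zeta n}|\mathcal Q_s^{(n)}|\;\le\; C_1 (\zeta n+1)\,e^{-(\gamma_1-\zeta\log K-H(\zeta))n}\,|\Delta^{*}|.
\]
Since $\gamma_1$ is the $\delta$-independent constant from Proposition~\ref{basetail}, I can fix $\gamma_3:=\gamma_1/2$ (independent of $\delta$ and $\delta^{*}$) and then choose $\zeta$ small enough, depending on $\delta,\delta^{*}$ through $K$, so that $\zeta\log K+H(\zeta)<\gamma_1/2$; absorbing the polynomial prefactor into a $\delta$-dependent constant $C_3$ gives the claimed estimate.

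The main obstacle is setting up the recursion correctly in Step~1. One has to be careful that at each escape step the relevant "base'' for applying Proposition~\ref{basetail} is an interval of length between $\delta/3$ and $\delta$ (the left or right piece of Lemma~\ref{retpart}), not $\Delta^{*}$; that the bounded distortion constant $\mathcal D_\delta$ of Proposition~\ref{escapedist} is legitimately available along each free/bound itinerary realized up to the $s$-th escape; and that summing over partition elements with a given escape time $\tau$ can legitimately be replaced by the tail $|\{E\ge\tau\}|\le C_1 e^{-\gamma_1(\tau-1)}|I|$ rather than the single-element measure, so that the constants $K$ and $C_1$ do not depend on the combinatorial multiplicity of escape-partition elements realizing a given time. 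Once the recursion is in place, everything else is standard large-deviations bookkeeping.
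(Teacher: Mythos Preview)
Your proposal is correct and follows essentially the same route as the paper: iterate Proposition~\ref{basetail} through the successive escapes, pull back via the distortion bound $\mathcal D_\delta$ of Proposition~\ref{escapedist}, and then control the combinatorial multiplicity of escape-time sequences by a Stirling-type estimate once $s\le\zeta n$ with $\zeta$ small. The only cosmetic difference is packaging: you phrase the iteration as a convolution recursion for $Q_s(n)$ leading to the binomial $\binom{n}{s}$, whereas the paper expands directly over sequences $(t_1,\ldots,t_{s+1})$ with $\sum t_j=n$ and bounds their number by $N_{n,s+1}\le e^{\hat\zeta n}$ via the counting argument of Lemma~\ref{combestlem}; the resulting bound $(C_1\mathcal D_\delta/\delta)^{\zeta n}e^{\hat\zeta n}e^{-\gamma_1 n}$ and the choice of $\zeta$ are the same.
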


  \begin{lemma}\label{manyescapes}
There exists a constant \( C_{4}>0 \)
depending on \( \delta \)  and \( \delta^{*} \) and a constant
\(   \gamma_{4}>0 \) independent of \( \delta  \) and \( \delta^{*} \)
such that, for all \( \zeta>0 \) sufficiently small and all \(
  n\geq 1 \) we have
  \[     \sum_{\zeta n<s \leq  n}|\Q_s^{(n)}| \leq
  C_{4}  e^{-\gamma_{4} n} |\Delta^{*}|.
  \]
   \end{lemma}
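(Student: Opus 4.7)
The plan rests on a single measure-theoretic observation: every escape event of an interval produces a definite fraction that is ``consumed'' by being mapped onto $\Delta^*$ within a uniformly bounded number of steps. Precisely, if $\omega^{(k)} \subset \Delta^*$ has reached its $k$-th escape at time $E_k$, then $|f^{E_k}(\omega^{(k)})| \geq \delta$, so Lemma \ref{retpart} supplies a subinterval of $f^{E_k}(\omega^{(k)})$ of proportion at least $\xi$ that is mapped diffeomorphically onto $\Delta^*$ in at most $t^*$ further iterates. Pulling this back to $\omega^{(k)}$ via the bounded distortion of Proposition \ref{escapedist} (with the now-fixed constant $\mathcal D_\delta$, absorbing also the bounded distortion of the extra $t_0$ steps which occur outside $\Delta$) produces a subinterval of proportion at least $\xi' := \xi/\mathcal D_\delta > 0$. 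By construction this subinterval is the $*$-piece at that escape and is assigned return time $E_k + t_0 \leq E_k + t^*$.

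Next I would set up an inductive measure bound on the ``pool'' of intervals at escape level $s$. Let $U_s \subset \Delta^*$ denote the union of all $L$- and $R$-subintervals arising immediately after an $s$-th escape and being reprocessed through the escape partition algorithm, with $U_0 := \Delta^*$. The observation above, applied to each component of $U_{s-1}$ that subsequently escapes, gives $|U_s| \leq (1-\xi')|U_{s-1}|$, hence inductively $|U_s| \leq (1-\xi')^s |\Delta^*|$. Every $\omega \in \mathcal Q^{(n)}_s$ falls in one of two classes: either $\omega \subset U_s$ (its $(s+1)$-th escape has not yet occurred by time $n$), or $\omega$ is the $*$-piece of some interval in $U_{s-1}$ whose $s$-th escape happened at time $E_s$ with $E_s + t_0 > n$ (its return time exceeds $n$ and it is still ``in the return queue''). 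The second class has total measure bounded by $|U_{s-1}| \leq (1-\xi')^{s-1}|\Delta^*|$, giving
\begin{equation*}
|\mathcal Q^{(n)}_s| \leq 2(1-\xi')^{s-1}|\Delta^*|.
\end{equation*}

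Summing the geometric tail yields
\begin{equation*}
\sum_{\zeta n < s \leq n} |\mathcal Q^{(n)}_s| \leq \frac{2}{\xi'}(1-\xi')^{\zeta n}|\Delta^*| = C_4 e^{-\gamma_4 n}|\Delta^*|,
\end{equation*}
with $\gamma_4 := \zeta \log(1/(1-\xi'))$, which is the asserted bound. The most delicate point will be the bookkeeping that correctly allocates each element of $\mathcal Q^{(n)}_s$ to exactly one of the two classes above and prevents double-counting between consecutive escape levels; the distortion-based pullback itself is routine at this stage since $\delta$ is permanently fixed and Proposition \ref{escapedist} supplies a single uniform constant $\mathcal D_\delta$ valid throughout the construction.
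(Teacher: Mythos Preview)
Your argument is correct and is essentially the same as the paper's proof: both hinge on the observation that at each escape a definite proportion $\xi/\mathcal D_\delta$ of the surviving mass is siphoned off as a $*$-piece (via Lemma~\ref{retpart} pulled back through Proposition~\ref{escapedist}), yielding the geometric bound $(1-\xi')^{s}$ and hence an exponential tail after summing over $s>\zeta n$. The paper phrases this via the nested sets $\{E_s\le n\}$ and uses the inclusion $\mathcal Q_s^{(n)}\subset\{E_s\le n\}$ directly, whereas you introduce the pools $U_s$ and explicitly split $\mathcal Q_s^{(n)}$ into the two classes (still-processing vs.\ queued $*$-piece); this extra bookkeeping is harmless and arguably cleaner, but the substance is identical.
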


\subsection{Returns after few escapes}
\begin{proof}[Proof of Lemma \ref{fewescapes}]

Starting with the escape partition on \( \Delta^{*} \), letting \(
t_{1}\geq 1 \) be an integer, we write
\[
\{E_{1}=t_{1}\}
\]
for the set of points in \( \Delta^{*} \) which belong to intervals
which have a first escape at time \( t_{1} \). By Proposition
\ref{basetail}, using the fact that \( |\Delta^{*}|=\delta^{*}
\),   we have
\[
|\{E_{1}=t_{1}\}| \leq |\mathcal E_{t_{1}}(\Delta^{*})|\leq
C_{1}e^{-\gamma_{1}t_{1}} =
\frac{C_{1}}{\delta^{*}} e^{-\gamma_{1}t_{1}} |\Delta^{*}|.
\]
We then repeat the escape partition construction on each component of
\( \{E_{1}=t_{1}\} \). We let
\[
\{E_{2}=t_{2}+t_{1}: E_{1}=t_{1}\}
\]
denote the set of points which belong to an interval which has a first
escape at time \( t_{1} \) and a second escape at time \( t_{2}+t_{1}
\), i.e. \( t_{2} \) iterates after the first escape. Proposition
\ref{basetail} gives an estimate for the size of
\( \{E_{2}=t_{2}+t_{1}: E_{1}=t_{1}\} \) \emph{at time} \( t_{1} \):
if \( \omega \) is an escape component belonging to
\( \{E_{1}=t_{1}\} \) then
\[
|\mathcal E_{t_{2}}(f^{t_{1}}(\omega))| \leq C_{1}e^{-\gamma_{1} t_{2}}
\leq \frac{C_{1}}{\delta} e^{-\gamma_{1} t_{2}} |(f^{t_{1}}(\omega))|.
\]
Using the bounded distortion property from Proposition \ref{escapedist}
this ratio is preserved for the initial interval \( \omega \) up to
the bounded distortion constant \( \mathcal D_{\delta} \) to get
\[
|\{E_{2}=t_{2}+t_{1}: E_{1}=t_{1}\}| \leq \frac{C_{1}\mathcal
D_{\delta}}{\delta} e^{-\gamma_{1}t_{2}} |\{E_{1}=t_{1} \}|
\leq \frac{C_{1}^{2}\mathcal
D_{\delta}}{\delta\delta^{*}} e^{-\gamma_{1}(t_{1}+t_{2})}|\Delta^{*}|.
\]
 Continuing in this way we get
 \[
 |\{E_{s}=t_{1}+\cdots+t_{s}: E_{1}=t_{1}, E_{2}=t_{2},\ldots,
 E_{s-1}=t_{s-1}\}| \leq \frac{C_{1}^{s}\mathcal
D_{\delta}^{s-1}}{\delta^{s-1}\delta^{*}} e^{-\gamma_{1}E_{s}}|\Delta^{*}|.
 \]
Letting \( t_{s+1}=n-E_{s} \) we then apply one more iteration of this
formula to get
\begin{equation*}
|\Q^{(n)}_s(t_1,\ldots,t_{s+1})|\leq \frac{C_1^{s+1}\mathcal
D_{\delta}^{s}}{\delta^{s}\delta^{*}} e^{-\gamma_1 n}|\Delta^{*}|.
\end{equation*}
where
\[
\Q^{(n)}_s (t_1,\ldots, t_{s+1})=\{\omega\in\Q^{(n)}_s :
E_{i+1}(\o)-E_{i}(\o) = t_{i}, i= 1,\ldots,  s, n-E_{s}=t_{s+1}
\}.
\]
Thus for each \( s \) we have
\[
|\mathcal Q^{(n)}_{s}| \leq  \sum_{\overset{(t_1,\ldots,t_{s+1})}{\sum t_j=n}}
    |\Q^{(n)}_s(t_1,\ldots, t_{s+1})|
  \leq
  N_{n,s+1} \frac{C_1^{s+1}
    \mathcal D_{\delta}^{s}}{\delta^{s}\delta^{*}} e^{-\gamma_1 n}|\Delta^{*}|
\]
where $N_{n,s+1}$ is the number of possible sequences
$(t_1,\ldots, t_{s+1})$ such that $\sum t_j=n$, and therefore
    \begin{equation*}
  \sum_{0\leq s\leq\zeta n}|\Q_s^{(n)}|
    \leq \sum_{s\leq\zeta n}N_{n,s}
    \frac{C_1^{s+1}\mathcal
    D_{\delta}^{s}}{\delta^{s}\delta^{*}} e^{-\gamma_1 n}|\Delta^{*}|
    \end{equation*}
Using exactly the same counting argument used in the proof of Lemma
\ref{combestlem} we can choose \( \hat\zeta >0  \) arbitrarily small
as long as \( s\leq \zeta n \) and \( \zeta \) is sufficiently small,
so that \( N_{n,s+1}\leq e^{\hat\zeta n} \). This gives
\[
\sum_{0\leq s\leq\zeta n}|\Q_s^{(n)}| \leq
\frac{C_{1}}{\delta^{*}} \left(\frac{C_{1}\mathcal
D_{\delta}}{\delta}\right)^{\zeta n} e^{\hat \zeta n}
e^{-\gamma_{1} n} |\Delta^{*}|.
\]
Since both \( \zeta \) and \( \hat\zeta \) can be taken arbitrarily
small, the result follows.

\end{proof}

\begin{remark}
Notice that taking \( \zeta \) small (and
thus obtaining \( \hat\zeta \) small) does not depend on choosing \(
\delta \) or \( \delta^{*} \) sufficiently small as \( \zeta \) can be
chosen arbitrarily in the decomposition of the sum in \eqref{set_split}.
\end{remark}

\subsection{Returns after many escapes}

\begin{proof}[Proof of Lemma \ref{manyescapes}]

 To bound the set of points which have many escapes we use a softer
 argument. Clearly we have
 \[
 |\{E_{1}\leq n\}|\leq |\Delta^{*}|.
 \]
 By Lemma \ref{retpart} there is a fixed proportion \( \xi \)
 of \emph{the image} \( f^{E_{1}(\omega)} \) of each escaping
 component which actually has a ``full return'' to \( \Delta^{*} \)
 within \( t^{*} \) iterates after this escape.  Using the bounded
 distortion estimate in Proposition \ref{escapedist} again this
 translates to a minimum proportion \( \xi/\mathcal D_{\delta} \) of
 the actual interval \( \omega \). Therefore the maximum proportion of
 \(  |\{E_{1}\leq n\}| \) which is even allowed potentially to have a
 second escape (whether it be before or after time \( n \)) is bounded
 by \( 1-\xi/\mathcal D_{\delta} < 1 \) and so we have
 \[
 |\{E_{2} \leq n\}|\leq
 |\{E_{2} \text{ defined}: E_{1}\leq n\}| \leq
 \left(1-\frac{\xi}{\mathcal D_{\delta}}\right) |\{E_{1}\leq n\}
 \leq  \left(1-\frac{\xi}{\mathcal D_{\delta}}\right)
 |\Delta^{*}|.
 \]
 Iterating this formula we get
 \[
 |\{E_{s} \leq n\}|
 \leq  \left(1-\frac{\xi}{\mathcal D_{\delta}}\right)^{s-1}
 |\Delta^{*}|
 \]
and so there exists some constant \( C_{4}>0 \) such that
\begin{equation}\label{many_esc}
\sum_{\zeta n<s<n}|\Q_s^{(n)}|\leq \sum_{\zeta
n<s<n}\left(1-\frac{\xi}{\mathcal D_{\delta}}\right)^{s-1}
|\Delta^{*}|
\leq C_{4}\left(1-\frac{\xi}{\mathcal D_{\delta}}\right)^{\zeta n}|\Delta^{*}|.
\end{equation}
The statement in the Lemma follows with \( \gamma_{4}= -\zeta \log
(1-\xi/\mathcal D_{\delta}) \).

\end{proof}

\section{Statistical properties}
\label{sec_tower}

By Proposition \ref{main_tail}, we know the tail of the return times
is decaying exponentially fast and thus \( \mathcal Q \) is indeed a
partition of \( \Delta^{*} \), while by Proposition
\ref{escapedist}, we have the required bounded distortion property.
Thus together, these two Propositions imply Theorem
\ref{thm_markov}. Moreover, the construction and estimates given
above also yield the following additional properties which will be
used below: there exists some \( \lambda'
>1 \) such that
\begin{equation}\label{property1}
d(f^{T}x,f^{T}y)\ge \lambda' d(x,y)
\end{equation}
and there exists a constant \( C>0 \) such that
\begin{equation}\label{property2}
d(f^kx,f^ky)\le Cd(f^{T}x,f^{T}y), \text{ for all } k<T.
\end{equation}
Indeed, \eqref{property1} follows by the same expansion estimates used
repeatedly in the proof and \eqref{property2} follows almost trivially
from the bounded distortion property and
the fact that \( \omega \) never gets bigger than \( \delta \). It
remains to show how these properties imply the required statistical
properties.

\subsection{Absolutely continuous invariant probability measures}
The bounded distortion property of the induced map \( f^{T}:
\Delta^{*}\to\Delta^{*} \) implies, by classical results, the
existence of an ergodic, in fact mixing,
absolutely continuous invariant measure \( \hat\mu \)
for \( f^{T} \) with bounded density with respect to Lebesgue.
Therefore,
the exponential tail of the return time function \( T \)
implies in that
\[
\int T d\mu = \sum_{\omega\in\mathcal Q} T(\omega) \mu(\omega) <\infty.
\]
This means that the measure obtained by \emph{pushing forward} \( \hat\mu \)
by iterates of the original map \( f \) is finite and thus, after
normalization, yields an absolutely continuous probability measure \(
\mu \)
which, again by standard arguments, is also \( f \)-invariant and
ergodic, and in fact mixing for some power of \( f \). The
expansivity estimates obtained above clearly imply that \( \mu \) has
a positive Lypaunov exponent.

\subsection{The Markov extension}
First of all we
define a \emph{Markov extension} or \emph{Markov tower}.
Let
$\mathcal{T}_0$ be a copy $\Delta^{*}$
and let $\mathcal{T}_{j,0}$ be a copy of the element $\omega_j$ of the
partition \( \mathcal Q \). The return time function
$T:\Delta^{*}\to\natural$ is constant on partition elements
$\mathcal{T}_{j,0}$ with value $T|_{\mathcal{T}_{j,0}}=T_j\ge1$. Then
define
\[
\mathcal{T}=\{(x,k):x\in \mathcal{T}_0,\,k=0,\dots,T(x)-1\}=
\bigcup_{j\ge1}\bigcup_{k=0}^{T_j-1}\mathcal{T}_{j,k},
\]
where
\[
\mathcal{T}_{j,k}=\mathcal{T}_{j,0}\times\{k\}.
\]
So
$\mathcal{T}$ is the disjoint union of $T_j$ copies of each
$\mathcal{T}_{j,0}$. Define the {\em tower map}
\[
F:\mathcal{T}\to
\mathcal{T}
\]
by setting
\[
F(x,k)=(x,k+1)
\]
for $0\le k<T(x)-1$ and
\[
F(x,T(x)-1)=(f^Tx,0).
\]
$F$ is
Markov with respect to the partition $\{\mathcal{T}_{j,k}\}$ and the
return map \( F^{T}: \mathcal T_{j, 0} \to \mathcal T_{j,0} \), or
equivalently
$f^T:\Delta^{*}\to\Delta^{*} $,   is
full branched Markov with respect to the partition
$\{\mathcal{T}_{j,0}\}=\mathcal Q$ on \( \mathcal T_{0}=\Delta^{*} \).

\subsection{Projections}
Define the projection
\[
\pi:\mathcal{T}\to \hat{J}
\]
by
$\pi(x,k)=f^k(x)$. Clearly,  $\pi$ is a semi-conjugacy between $F$
and $f$:
\begin{equation}\label{conju}
f \circ \pi = \pi \circ F.
\end{equation}
Using the projection \( \pi \), any measure \( \tilde\mu \)
on \( \mathcal T \) can be
projected to a measure \(  \mu = \pi_{*}\tilde\mu\)
on \( \hat J \) with the property that \( \mu(A) =
\tilde\mu(\pi^{-1}(A)) \) for every measurable
set \( A \subset \hat J \). Moreover any
observable
\[
\varphi: \hat
J \to \mathbb R
\]
can be lifted to an observable
\[
\tilde\varphi  : \mathcal T \to \mathbb R
\quad \text{ given by }\quad
\tilde\varphi = \varphi \circ \pi.
\]
Notice that \eqref{conju} implies
\[
\tilde \varphi \circ F^{n} = \varphi \circ f^{n} \circ \pi \quad
\text{ and }\quad
 \int \tilde\varphi d\tilde\mu = \int \varphi \circ \pi d\tilde\mu
= \int \varphi d(\pi_{*}\tilde\mu).
\]
Therefore, for any two observables \( \varphi, \psi : \hat J \to
\mathbb R \) and their corresponding lifts \(  \tilde\varphi, \tilde
\psi : \mathcal T \to \mathbb R \), any measure \( \tilde \mu \) and
its corresponding projection \( \mu = \pi_{*}\tilde \mu \) it is
sufficient to prove statistical properties for the lifts on the
tower to obtain similar results for the original map using the fact
that \( \pi \) is a ``measure-preserving'' (by definition, since one
measure is the projection of the other by \( \pi \)) semi-conjugacy.
For example, for the correlation function we have
\begin{align*}
    \int (\tilde\varphi \circ F^{n}) \tilde\psi d\tilde\mu -
    \int \tilde\varphi d\tilde\mu\int \tilde\psi d\tilde\mu
& = \int(\varphi\circ (f^n\circ\pi))(\psi\circ\pi)d\tilde\mu
-\int(\varphi\circ \pi )d\tilde\mu\int(\psi\circ\pi)d\tilde\mu
\\& =\int(\varphi\circ f^n)\psi d(\pi_{*}\tilde\mu) -\int\varphi
d(\pi_{*}\tilde\mu) \int\psi d(\pi_{*}\tilde\mu)
 \\ &   =
 \int (\varphi \circ f^{n}) \psi d\mu - \int \varphi d\mu\int \psi d\mu.
\end{align*}
An important observation however is that the properties must be proved
on the tower for all observables which can be obtained as lifts of
suitable (H\"older continuous) observables on the manifold.

\subsection{Statistical properties on the tower}
In this section we state precisely the results of Young
\cite{You99} which give conditions for the exponential decay of
certain classes of observables for the map \( F:\mathcal T \to
\mathcal T \) and for the Central Limit Theorem.
Young's setting is very general and only requires \(
\Delta \) to be a measure space with some reference measure \( m \)
and the Markov tower structure described above. The Markov
structure naturally induces a symbolic metric on \( \mathcal T \)
which can be used to define notions of regularity both for the
Jacobian of \( F \) and for observables on \( \mathcal T \).

\subsubsection{Symbolic metric}
First of all we define a \emph{separation time}:
if $x,y\in\mathcal{T}_{j,0}$ for some $j$, then
\[
s(x,y)= \max\left\{ n\ge0 \text{ s.t } (F^T)^nx \text{ and }
(F^T)^ny \text{ lie in the same partition element of }
\mathcal{T}_0\right\}\]
If $x,y$ lie in distinct
partition elements, then $s(x,y)=0$. If $x,y\in\mathcal{T}_{j,k}$
then write $x=F^k x_0$, $y=F^k y_0$ where
$x_0,y_0\in\mathcal{T}_{j,0}$ and define $s(x,y)=s(x_0,y_0)$.
Then, for a fixed constant \( \sigma\in (0,1) \),
we define
\[
d_\sigma(x,y)=\sigma^{s(x,y)}.
\]
By the expansivity property of \( F^{T} \) any two distinct points
eventually separate and therefore this defines a metric on \( \mathcal T \).

\subsubsection{Regularity of observables in the symbolic metric}
Using the metric \( d_{\sigma} \) we  define
\[
\mathcal{H}_{\sigma}(\mathbb{R})=
\{ \phi: \T \rightarrow \mathbb{R}: \exists C_{\phi}:
|\phi(x)-\phi(y)| \leq C_{\phi} \sigma^{s(x,y)} \ \forall \
x,y\in\mathcal T \}
\]
to be  the  space of
``H\"older continuous'' functions on $\T$.

\subsubsection{Young's Theorem}
Let \( F: \mathcal T \to \mathcal T \) be a Markov map on a tower \(
\mathcal T \) with a reference measure \( m \), a Markov return map
\( F^{T}:\mathcal T_{0}\to\mathcal T_{0} \), an exponential tail of
the return time function \( T: \mathcal T_{0}\to \mathbb N \), and
such that the greatest common divisor of all values taken by the
function \( T \) is 1. Let \( JF^{T}\) denote the Jacobian of \(
F^{T} \) with respect to the reference measure \( m \).
\begin{theorem*}[\cite{You99}]
 Suppose that the following bounded distortion condition holds:
there exists a constant \( \tilde C \) such that for all \(
x,y\in\mathcal T_{0} \) we have
\begin{equation}\label{distsymb}
\left|\frac{|JF^{T}(x)|}{|JF^{T}(y)|}-1\right|\leq \tilde C
\sigma^{s(x,y)}.
\end{equation}
Then the correlation function \( C_{n}(\tilde\varphi
,  \tilde\psi,\tilde{\mu} )\)
decays exponentially fast (at a uniform rate)
for every observable \( \tilde\varphi\in
L^{\infty}(\tilde\mu) \) and \( \tilde\psi \in \mathcal{H}_{\sigma} \).
Moreover, the Central Limit Theorem holds for every
\( \tilde\psi \in \mathcal{H}_{\sigma} \).
 \end{theorem*}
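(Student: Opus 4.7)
My plan is to reduce the theorem to a statement about the transfer operator of the induced Markov map \( F^{T}:\T_{0}\to\T_{0} \), to establish exponential mixing at the base by a coupling argument, and then to propagate the estimate to the full tower \( \T \) using the exponential tail of \( T \) together with the regularity of observables in the symbolic metric. The bounded distortion hypothesis \eqref{distsymb} drives every step: it produces a cone of densities that is preserved by the transfer operator and inside which different initial densities can be compared pointwise.

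\textbf{Invariant density and coupling at the base.} Let \( \hat L \) denote the transfer operator of \( F^{T} \) with respect to \( m \). The distortion bound \eqref{distsymb} implies that \( \hat L \) preserves a cone \( \mathcal C \) of densities bounded above and below and Lipschitz in the metric \( d_{\sigma} \); a Hilbert-metric contraction argument (or a direct Lasota--Yorke estimate in the norm \( \|\cdot\|_{\infty}+\mathrm{Lip}_{d_{\sigma}}(\cdot) \)) produces a unique fixed density \( h_{0}\in\mathcal C \) and, after lifting and normalising using \( \int T\,h_{0}\,dm<\infty \) (a consequence of the exponential tail of \( T \)), the \( F \)-invariant probability \( \tilde\mu \). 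Given any two densities \( \phi,\psi\in\mathcal C \) one writes
\[
\phi=\min(\phi,\psi)+(\phi-\min(\phi,\psi)),\qquad \psi=\min(\phi,\psi)+(\psi-\min(\phi,\psi)),
\]
and iterates. The uniform pinching \( a\le \phi/\psi\le b \) coming from \eqref{distsymb} and the full-branched structure of \( F^{T} \) yield a fixed proportion \( \alpha\in(0,1) \), independent of the pair, of matched mass at each application of \( \hat L \). Hence after \( k \) renewal steps the unmatched mass is \( O((1-\alpha)^{k}) \).

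\textbf{From returns to real time, and CLT.} For \( \tilde\psi\in\mathcal{H}_{\sigma} \), H\"older regularity controls its oscillation across each tower element \( \T_{j,k} \), so one can replace \( \tilde\psi \) by a function constant on each \( \T_{j,k} \) up to an error which is uniformly exponentially small in the separation time. Splitting the correlation integral according to whether the \( k \)-th return to \( \T_{0} \) has occurred before time \( n/2 \) (for a suitable fraction \( k=\lfloor\beta n\rfloor \)), the first piece is controlled by the coupling bound \( (1-\alpha)^{k} \), while the second piece is bounded by \( \tilde\mu\{T^{(k)}>n/2\} \) and decays exponentially in \( n \) by a Chernoff-type estimate on the sum of return times (whose tail is exponential by Proposition~\ref{main_tail}). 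Optimising \( \beta \) yields \( C_{n}(\tilde\varphi,\tilde\psi,\tilde\mu)\le C\rho^{n} \) with \( \rho<1 \) depending on \( \alpha \), \( \sigma \) and the tail rate, and with \( C \) proportional to \( \|\tilde\varphi\|_{\infty}(\|\tilde\psi\|_{\infty}+C_{\tilde\psi}) \). The CLT then follows from Gordin's martingale-coboundary decomposition: summability of \( C_{n}(\tilde\psi,\tilde\psi,\tilde\mu) \) gives \( \tilde\psi=\chi+g-g\circ F \) with \( \{\chi\circ F^{n}\}_{n\ge0} \) a stationary reverse martingale-difference sequence, to which the classical martingale CLT applies, the asymptotic variance being strictly positive precisely when \( \tilde\psi \) is not a coboundary.

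\textbf{Main obstacle.} The most delicate point is obtaining a uniform lower bound \( \alpha>0 \) on the matched proportion at every step of the coupling, independent of the initial pair in the cone. This relies crucially on the full-branched Markov property of \( F^{T} \) (so that every partition element of \( \T_{0} \) is mapped diffeomorphically onto all of \( \T_{0} \)) together with the two-sided pinching of \( JF^{T} \) coming from \eqref{distsymb}; a weaker one-sided distortion bound, or returns that are not full branches, would only give a subexponential coupling rate and the main estimate would collapse.
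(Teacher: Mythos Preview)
The paper does not prove this theorem at all: it is quoted verbatim from \cite{You99} and used as a black box. The paper's own contribution in Section~\ref{sec_tower} is only to verify the hypotheses of Young's theorem in the present setting, namely that the Euclidean distortion bound of Proposition~\ref{escapedist} implies the symbolic distortion bound \eqref{distsymb}, and that H\"older observables on \( \hat J \) lift to observables in \( \mathcal H_{\sigma} \) on the tower. So there is no ``paper's own proof'' to compare against.

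Your proposal is a reasonable high-level sketch of Young's actual argument in \cite{You99}: the coupling method at the base, the exponential tail of return times to pass from renewal time to real time, and Gordin's decomposition for the CLT are indeed the ingredients she uses. As a sketch it is broadly faithful, though several steps are compressed to the point of being assertions rather than arguments (the cone preservation and Hilbert-metric contraction, the Chernoff bound on \( \tilde\mu\{T^{(k)}>n/2\} \), and the passage from coupling at the base to decay of correlations for observables living on the full tower all require nontrivial work that you have not supplied). But since the paper under review simply cites the result, your write-up goes well beyond what is needed here; for the purposes of this paper it would suffice to invoke \cite{You99} directly.
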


Notice that the bounded distortion condition \eqref{distsymb} is the
\emph{only} assumption of the theorem over and above the
Markov tower structure, the exponential decay of the return time
function, and the assumption on the greatest common divisor of \( T
\). Moreover, both the exponential tail and the assumption on the gcd
can be relaxed to some extent. If the gcd of \(
T \) is \( k>1 \) we can just consider \( \tilde F=F^{k}:\mathcal T
\to \mathcal T \) which will continue to have the same Markov
structure and properties and a return time function \( \tilde T \)
with gcd (\( \tilde T = 1\)). The exponential decay of the tail of the
return time function can also be relaxed to assume only subexponential
or even polynomial decay (in fact including these cases
is one of the main motivations
for \cite{You99}), although in this case the rate of decay of
the correlation function is correspondingly slower.

\subsection{Decay of correlations for the original map}
To obtain our desired results for \( f \) we therefore just need to
show that the   the bounded distortion condition
of Proposition \ref{escapedist} implies the required bounded
distortion condition \eqref{distsymb} in the symbolic metric, and that
observables on \( \hat J \) which are H\"older
continuous with respect to the usual
Euclidean metric lift to bounded observables on \( \mathcal T \)
which are  H\"older continuous in the symbolic metric.

\begin{lemma}
    The bounded distortion condition \eqref{distsymb} holds.
 \end{lemma}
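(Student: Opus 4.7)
The plan is to derive \eqref{distsymb} directly from the Lipschitz distortion bound of Proposition~\ref{escapedist} combined with the uniform expansion \eqref{property1} of the induced map $F^T=f^T$. The key conceptual point is that the symbolic metric $d_\sigma(x,y)=\sigma^{s(x,y)}$ and the Euclidean metric on $\Delta^*$ are comparable on each partition element, provided $\sigma$ is chosen compatibly with the expansion constant $\lambda'$.

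First I would fix $\sigma\in(1/\lambda',1)$ once and for all. Given $x,y\in\mathcal T_0=\Delta^*$ belonging to the same element $\omega\in\mathcal Q$ with $s(x,y)=n\geq 1$, the definition of separation time guarantees that $(F^T)^n x$ and $(F^T)^n y$ lie in the same partition element, and in particular both lie in $\Delta^*$. Applying \eqref{property1} inductively $n-1$ times to the orbit $f^T(x),(f^T)^2(x),\ldots$ (noting that at each step both points still lie in a common partition element, so $F^T$ acts as $f^T$ with the required expansion), we get
\[
(\lambda')^{n-1}\bigl|f^T(x)-f^T(y)\bigr|\leq \bigl|(f^T)^n(x)-(f^T)^n(y)\bigr|\leq|\Delta^*|,
\]
hence $|f^T(x)-f^T(y)|\leq(\lambda')^{-(n-1)}|\Delta^*|$.

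Now Proposition~\ref{escapedist} applied to the branch $f^T:\omega\to\Delta^*$ gives the Lipschitz distortion bound
\[
\left|\frac{(f^T)'(x)}{(f^T)'(y)}-1\right|\leq\tilde{\mathcal D}\,|f^T(x)-f^T(y)|.
\]
Since the Jacobian of $F^T$ with respect to the reference measure $m$ (Lebesgue) is exactly $|(f^T)'|$, combining the two displays yields
\[
\left|\frac{|JF^T(x)|}{|JF^T(y)|}-1\right|\leq \tilde{\mathcal D}\,|\Delta^*|\,(\lambda')^{-(n-1)}
\leq \tilde C\,\sigma^n=\tilde C\,\sigma^{s(x,y)}
\]
with $\tilde C=\tilde{\mathcal D}\,|\Delta^*|\,\lambda'\,\sup_n(\lambda'\sigma)^{-n}\cdot\sigma^{-0}$, absorbing the finitely many constants; the finiteness of the supremum is exactly the condition $\sigma\lambda'>1$. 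Points $x,y$ in distinct elements of $\mathcal Q$ have $s(x,y)=0$, so the inequality there reduces to the uniform bound $|JF^T(x)/JF^T(y)|\leq\mathcal D_\delta$ already given by Proposition~\ref{escapedist}, which can be absorbed into $\tilde C$.

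There is essentially no obstacle here: every ingredient has already been established. The only point requiring minor care is matching the contraction rate of the symbolic metric to the expansion rate $\lambda'$, which dictates the admissible range of $\sigma$; once $\sigma\in(1/\lambda',1)$ is fixed, the geometric factor $(\lambda'\sigma)^{-n}$ is summable (in fact bounded) and everything else follows from two already-proved facts.
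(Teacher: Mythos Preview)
Your argument is correct and essentially identical to the paper's: both combine the Lipschitz distortion bound from Proposition~\ref{escapedist} with the expansion \eqref{property1} to get $|f^T(x)-f^T(y)|\le|\Delta^*|(\lambda')^{-s(x,y)+1}$, and then choose $\sigma\in(1/\lambda',1)$. One small inaccuracy: in your final paragraph you invoke Proposition~\ref{escapedist} for points in \emph{distinct} elements of $\mathcal Q$, but that proposition is stated only for $x,y$ in the same interval; the relevant $s(x,y)=0$ case is rather $x,y$ in the same $\omega$ with $f^T(x),f^T(y)$ separated, and that is covered trivially by $|f^T(x)-f^T(y)|\le|\Delta^*|$ (or simply by allowing $n=0$ in your displayed estimate).
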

 \begin{proof}
For the bounded distortion condition, notice first of all that \( F^{T} \)
is differentiable in our setting and therefore the left hand side of
\eqref{distsymb} formulated in terms of the Jacobian of \( F \) is
exactly the same as the left hand side of the second inequality in
Proposition \ref{escapedist} formulated in terms of the derivative of \(
f \), with \( k=T \). Thus all we need to show is that
\[
\tilde{\mathcal D}{} |f^{T}(x)-f^{T}(y)| \leq
C\sigma^{s(x,y)}|\Delta^{*}|
\]
for any two points \( x,y \) belonging to the same element of the
partition \( \mathcal Q = \mathcal T_{0} \). To see this, let \(
\lambda'  \) be the expansion constant in \eqref{property1}, then
cylinder sets are shrinking exponentially at rate \( \lambda' \) and
so we have
\begin{equation}\label{cylinder}
|x-y| \leq |\Delta^{*}|\lambda'^{-s(x,y)} \quad\text{ and so } \quad
|f^{T}(x)-f^{T}(y)| \leq |\Delta^{*}|\lambda'^{-s(x,y)+1}.
\end{equation}
Thus the result follows as long as \( 1> \sigma >  \lambda'^{-1}. \)
\end{proof}
\begin{lemma}
    If $\varphi$ is H\"older continuous with exponent
    $\gamma\geq \tilde\gamma$ then
    $\tilde\varphi=\pi\circ\varphi\in\mathcal{H}_{\sigma}.$
 \end{lemma}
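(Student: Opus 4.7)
The plan is to unwind the symbolic metric back to the Euclidean metric on $\hat J$, use the contraction property \eqref{property2} to reduce distances at arbitrary tower levels to distances at the top of the tower, and then use the cylinder estimate \eqref{cylinder} to convert Euclidean shrinking at the top into the exponential decay $\sigma^{s(x,y)}$ required by $\mathcal H_{\sigma}$. The definition of $\tilde\gamma$ will be essentially $\tilde\gamma = -\log\sigma/\log\lambda'$, where $\lambda'$ is the expansion constant from \eqref{property1}.

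Concretely, take two points $x,y\in\mathcal T$ with $s(x,y)\geq 1$; then they must lie in the same partition element $\mathcal T_{j,k}$, so we can write $x=F^{k}x_{0}$, $y=F^{k}y_{0}$ with $x_{0},y_{0}\in\mathcal T_{j,0}=\omega_{j}$ and $s(x,y)=s(x_{0},y_{0})$. By definition of $\pi$ we have $\tilde\varphi(x)-\tilde\varphi(y)=\varphi(f^{k}x_{0})-\varphi(f^{k}y_{0})$. Since $\varphi$ is H\"older with exponent $\gamma$ and constant $L$ (say),
\[
|\tilde\varphi(x)-\tilde\varphi(y)|\le L\,|f^{k}x_{0}-f^{k}y_{0}|^{\gamma}.
\]
By \eqref{property2} applied to the partition element $\omega_{j}$ (with $k<T$, the boundary case $k=T$ being immediate since $F^{T}x_{0},F^{T}y_{0}\in\Delta^{*}$), we bound $|f^{k}x_{0}-f^{k}y_{0}|\le C\,|f^{T}x_{0}-f^{T}y_{0}|$, and then by \eqref{cylinder},
\[
|f^{T}x_{0}-f^{T}y_{0}|\le |\Delta^{*}|\,\lambda'^{-s(x_{0},y_{0})+1}.
\]
Combining these estimates gives
\[
|\tilde\varphi(x)-\tilde\varphi(y)|\le L\,\bigl(C|\Delta^{*}|\lambda'\bigr)^{\gamma}\,\bigl(\lambda'^{-\gamma}\bigr)^{s(x,y)}.
\]
Choosing the threshold $\tilde\gamma>0$ so that $\lambda'^{-\tilde\gamma}\le\sigma$ (equivalently $\tilde\gamma\ge -\log\sigma/\log\lambda'$), we obtain, for any $\gamma\ge\tilde\gamma$, a constant $C_{\tilde\varphi}$ such that $|\tilde\varphi(x)-\tilde\varphi(y)|\le C_{\tilde\varphi}\,\sigma^{s(x,y)}$ for all pairs with $s(x,y)\geq 1$; the case $s(x,y)=0$ is handled trivially using that $\varphi$ is bounded on the compact interval $\hat J$ (adjusting $C_{\tilde\varphi}$ by $2\|\varphi\|_{\infty}$ if necessary, since $\sigma^{0}=1$). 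This shows $\tilde\varphi\in\mathcal H_{\sigma}$.

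The only nontrivial step is the comparison $|f^{k}x_{0}-f^{k}y_{0}|\lesssim \lambda'^{-s(x,y)}$ at an arbitrary intermediate level $k$; this is precisely where the bounded contraction property \eqref{property2} is indispensable, because $F$ is not uniformly expanding at every step of the tower (only the return map $F^{T}$ is), so a direct expansion argument would fail. Everything else is a routine combination of H\"older continuity, boundedness of $\varphi$, and the exponential shrinking of cylinders under $F^{T}$.
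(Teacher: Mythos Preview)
Your proof is correct and follows essentially the same route as the paper: apply H\"older continuity of $\varphi$ to $\pi(x),\pi(y)$, use the bounded contraction property \eqref{property2} to pass from level $k$ to the return time $T$, and then invoke the cylinder estimate \eqref{cylinder} to convert the Euclidean bound into $\lambda'^{-\gamma s(x,y)}\le\sigma^{s(x,y)}$. Your treatment is in fact slightly more careful than the paper's, since you explicitly handle the case $s(x,y)=0$ via boundedness of $\varphi$ and spell out the threshold $\tilde\gamma=-\log\sigma/\log\lambda'$, whereas the paper phrases the same condition as a constraint on $\sigma$.
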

 \begin{proof}
Let \( \varphi: \hat J \to
\mathbb R \) be H\"older continuous: there exists \( C_{\varphi} \)
such that
\[
|\varphi(x)-\varphi(y)|\leq C_{\varphi} |x-y|^{\alpha}
\]
for any two \( x,y\in\hat J \). Now, for any \( \tilde x, \tilde y \)
belonging to some partition element of
\( \mathcal T\) we have
\[
|\tilde\varphi(\tilde x) - \tilde\varphi (\tilde y)| = |\varphi
(\pi(\tilde x)) - \varphi (\pi (\tilde y))| \leq
C_{\varphi}|\pi(x)-\pi(y)|^{\alpha}.
\]
Since \( \pi(x) \) and \( \pi(y) \) belong to the image \( \omega_{k} \)
of some element \( \omega\in\mathcal Q \) for \( k<T \),
then \eqref{property2} and \eqref{cylinder}
give
\[
|\pi(x)-\pi(y)|^{\alpha} \leq
C^{\alpha}|f^{T-k}(\pi(x))-f^{T-k}(\pi(y))|^{\alpha} \leq
|C^{\alpha}\Delta^{*}|^{\alpha}\lambda'^{\alpha(-s(x,y)+1)}.
\]
The result follows if \( 1>\sigma > \lambda'^{-\alpha} \).
\end{proof}

\begin{bibsection}
    \begin{biblist}
\bib{Aaronson}{book}{
      author={Aaronson, J.},
      title={An Introduction to Infinite Ergodic Theory},
      note={Math. Surveys and Monographs \textbf{50}. Amer. Math. Soc, (1997)},
    }
\bib{AarDenUrb}{article}{
      author={Aaronson, J.},
      author={Denker, M.},
      author={Urbanski, M.},
      title={Ergodic theory for Markov fibred systems and parabolic rational maps},
      journal={Trans. Am. Math. Soc},
      volume={337},
      date={1993},
      number={2},
      pages={495\ndash 548},
}
    \bib{AlvBonVia00}{article}{
      author={Alves, J. F.},
      author={Bonatti, C.},
      author={Viana, M.},
      title={SRB measures for partially hyperbolic systems whose
      central direction is mostly expanding},
      journal={Invent. Math.},
      volume={140},
      date={2000},
      number={2},
      pages={351\ndash 398},
      issn={0020-9910},
    }
    \bib{AlvLuzPindim1}{article}{
      author={Alves, J. F.},
      author={Luzzatto, S.},
      author={Pinheiro, V.},
      title={Lyapunov exponents and rates of mixing for one-dimensional maps.},
      journal={Ergodic Th. \& Dyn. Syst},
      volume={24},
      pages={637--657},
      year={2004},
    }
\bib{AlvLuzPin}{article}{author={Alves, J. F.},
author={Luzzatto, Stefano}, author={Pinheiro, Vilton},
title={Markov structures and decay of correlations for non-uniformly expanding maps.},
journal={Ann. Inst. H. Poin. Anal. Non-Lin.}, volume={22}, pages={817--839},
date={2005},
}
\bib{BenCar85}{article}{
      author={Benedicks, M.},
      author={Carleson, L.},
      title={On iterations of $1-ax^2$ on $(-1,1)$},
      date={1985},
      journal={Ann. of Math.},
      volume={122},
      pages={1\ndash 25},
    }
    \bib{BruLuzStr03}{article}{
      author={Bruin, H.},
      author={\href {www.ic.ac.uk/~luzzatto}{S. Luzzatto}},
      author={\href {http://www.maths.warwick.ac.uk/~strien}{S. van Strien}},
      title={\href {http://dx.doi.org/10.1016/S0012-9593(03)00025-9}
      {Decay of correlations
in one-dimensional dynamics}},
      journal={Ann. Sci. \'Ec. Norm. Sup.},
      volume={36},
      number={4},
      pages={621--646},
      year={2003},
    }
    \bib{BruSheStr03}{article}{
      author={Bruin, H.},
      author={Shen, W.},
      author={van Strien, S.},
      title={Invariant measures exist without a growth condition},
      journal={Comm. Math. Phys.},
      volume={241},
      date={2003},
      number={2-3},
      pages={287\ndash 306},
    }
    \bib{BuzSar03}{article}{ author={Buzzi, J.},
    author={Sarig, O.}, title={Uniqueness of equilibrium measures
    for countable Markov shifts and multidimensional piecewise expanding maps},
    journal={Ergodic Theory Dynam.  Systems}, volume={23}, date={2003},
    number={5}, pages={1383\ndash 1400}, }
\bib{ChaGou06}{article}{
      title={On almost-sure versions of classical limit theorems for dynamical systems},
      author={Chazottes, J.R.}, 
      author={Gou\"ezel, S.},   
      year={2006},
      journal={Preprint},
    }
    \bib{ColEck83}{article}{
      author={Collet, P.},
      author={Eckmann, J.-P.},
      title={Positive Lyapunov exponents and absolute continuity for maps of the
      interval},
      journal={Ergodic Theory Dynam. Systems},
      volume={3},
      date={1983},
      number={1},
      pages={13\ndash 46},
      issn={0143-3857},
    }
 \bib{DenNitUrb95}{article}{ author={Denker, M.}, author={Nitecki, Z.},
 author={Urba{\'n}ski, M.}, title={Conformal measures and $S$-unimodal maps},
 booktitle={Dynamical systems and applications},
 series={World Sci.  Ser.  Appl.  Anal.}, volume={4}, pages={169\ndash 212},
 publisher={World Sci.  Publishing}, place={River Edge, NJ}, date={1995}, }

\bib{DenPrzUrb}{article}{
author={Denker, M.},
author={Przytycki, F},
author={Urbanski, M.},
      title={On the transfer operator for rational functions on the Riemann sphere},
      journal={Ergodic Theory Dynam. Systems},
      volume={16},
      date={1996},
      pages={155\ndash 266},
}
\bib{Dia06}{article}{
   author={D{\'{\i}}az-Ordaz, K.},
   title={Decay of correlations for non-H\"older observables for
   one-dimensional expanding Lorenz-like maps},
   journal={Discrete Contin. Dyn. Syst.},
   volume={15},
   date={2006},
   number={1},
   pages={159--176},
}

\bib{Gauss}{article}{
title={Vol. X-1},
author={Gauss, K. F.},
volume={371},
pages={552-552},
year={1870},
     }
\bib{Gouezel}{article}{
      title={Sharp polynomial estimates for the decay of correlations},
      author={Gou\"ezel, S.},
      journal={Israel J. Math.},
      volume={139},
      pages={29-65},
      year={2004},
}
   \bib{Gou04}{article}{
      title={Decay of correlations for nonuniformly expanding systems},
      author={Gou\"ezel, S.},
      year={2004},
      journal={Bulletin de la Soci\'et\'e Math\'ematique de France, to appear},
    }

    \bib{HofKel82}{article}{
      author={Hofbauer, F.},
      author={Keller, G.},
      title={Ergodic properties of invariant measures for piecewise monotonic
      transformations},
      journal={Math. Z.},
      volume={180},
      date={1982},
      number={1},
      pages={119\ndash 140},
      issn={0025-5874},
    }
    \bib{Hol04}{article}{author={Holland, M.}, title={Slowly mixing systems and
    intermittency maps}, journal={Erg. Th. \& Dyn. Syst.}, year={2005},
    volume={25:1}, pages={133--159},}

    \bib{HolLuz}{article}{
      author={Holland, M.},
      author={Luzzatto, S.},
      title={Dynamics of two dimensional maps with criticalities and singularities},
      status={In progress},
      date={2003},
    }
    \bib{Jak78}{article}{
    author={Jakobson, M. V.},
    title={Topological and metric properties of one-dimensional
    endomorphisms},
    journal={Sov. Math. Dokl.},
    volume={19},
    year={1978},
    pages={1452--1456},
    }
    \bib{Jak81}{article}{ author={Jakobson, M. V.},
    title={Absolutely continuous invariant measures for one\ndash parameter families
    of one\ndash dimensional maps}, date={1981}, journal={Comm.  Math.  Phys.},
    volume={81}, pages={39\ndash 88}, }

    \bib{Jak01}{article}{author={Jakobson, M. V}, title={Piecewise smooth maps with
    absolutely continuous invariant measures and uniformly scaled Markov partitions},
    journal={Proceedings of Symposia in Pure Mathematics}, volume={69},
    pages={825--881}, date={2001},}

    \bib{JenMauUrb05}{article}{
       author={Jenkinson, O.},
       author={Mauldin, R. D.},
       author={Urba{\'n}ski, M.},
       title={Zero temperature limits of Gibbs-equilibrium states for countable
       alphabet subshifts of finite type},
       journal={J. Stat. Phys.},
       volume={119},
       date={2005},
       number={3-4},
       pages={765--776},
    }

    \bib{Kel80}{article}{
      author={Keller, G.},
      title={Generalized bounded variation and applications
    to piecewise monotonic transformations.},
    journal={Z. Wahrsch. Verw. Gebiete},
      volume={69},
      date={1985},
      number={3},
      pages={461--478},
    }
    \bib{KelNow92}{article}{
      author={Keller, G.},
      author={Nowicki, T.},
      title={Spectral theory, zeta functions and the distribution of periodic points for
      Collet-Eckmann maps},
      journal={Comm. Math. Phys.},
      volume={149},
      date={1992},
      number={1},
      pages={31\ndash 69},
    }
    \bib{LasYor73}{article}{
      author={Lasota, A.},
      author={Yorke, J. A.},
      title={On the existence of invariant measures for piecewise monotonic
      transformations},
      journal={Trans. Amer. Math. Soc.},
      volume={186},
      date={1973},
      pages={481\ndash 488 (1974)},
    }
    \bib{Liv95}{article}{
      author={Liverani, C.},
      title={Decay of correlations},
      journal={Ann. of Math. (2)},
      volume={142},
      date={1995},
      number={2},
      pages={239\ndash 301},
    }
    \bib{Luz05}{article}{
    author={\href{http://www.ma.ic.ac.uk/~luzzatto}{S. Luzzatto}},
    title={Stochastic behaviour in non-uniformly expanding maps},
    journal={Handbook of Dynamical Systems}, publisher={Elsevier}, date={2005}, }

    \bib{LuzTuc99}{article}{
      author={Luzzatto, S.},
      author={Tucker, W.},
      title={Non-uniformly expanding dynamics in maps with singularities and
      criticalities},
      journal={\href {http://www.ihes.fr/IHES/Publications/Publications.html}
      {Inst. Hautes \'Etudes Sci. Publ. Math.}},
      number={89},
      date={1999},
      pages={179\ndash 226},
    }
    \bib{LuzVia2}{article}{
      author={Luzzatto, S.},
      author={Viana, M.},
      title={Lorenz-like attractors without continuous invariant foliations},
      status={In progress},
    }
    \bib{MauUrb01}{article}{
       author={Mauldin, R. D.},
       author={Urba{\'n}ski, M.},
       title={Gibbs states on the symbolic space over an infinite alphabet},
       journal={Israel J. Math.},
       volume={125},
       date={2001},
       pages={93--130},
    }

     \bib{MelNic04}{article}{
      author={Melbourne, I.},
      author={Nicol, M.},
      title={Statistical properties of endomorphisms and compact group extensions},
      journal={J. London Math. Soc.},
      number={70},
      date={2004},
      pages={427\ndash 446},
    }
    \bib{Mis81}{article}{
      author={Misiurewicz, M.},
      title={Absolutely continuous measures for certain maps of an interval},
      journal={Inst. Hautes \'Etudes Sci. Publ. Math.},
      number={53},
      date={1981},
      pages={17\ndash 51},
    }
    \bib{NowStr88}{article}{
      author={Nowicki, T.},
      author={van Strien, S.},
      title={Absolutely continuous invariant measures for $C\sp 2$ unimodal maps
      satisfying the Collet-Eckmann conditions},
      journal={Invent. Math.},
      volume={93},
      date={1988},
      number={3},
      pages={619\ndash 635},
    }
    \bib{NowStr91}{article}{
      author={Nowicki, T.},
      author={van Strien, S.},
      title={Invariant measures exist under a summability condition for unimodal maps},
      journal={Invent. Math.},
      volume={105},
      date={1991},
      number={1},
      pages={123\ndash 136},
    }
    \bib{Par60}{article}{
      author={Parry, W.},
      title={On the $\beta $-expansions of real numbers},
      journal={Acta Math. Acad. Sci. Hungar.},
      volume={11},
      date={1960},
      pages={401\ndash 416},
    }
    \bib{PesSen05}{article}{
    author={Pesin, Y.}, author={Senti, S.},
    title={Equilibrium measures for some one-dimensional maps},
    journal={Preprint}, date={2005},
    }
    \bib{PesZha05}{article}{
    author={Pesin, Y.}, author={Zhang, K.},
    title={Phase transitions for uniformly expanding maps},
    journal={J. Stat. Phys.}, date={2005},
    }
    \bib{Ren57}{article}{
      author={R{\'e}nyi, A.},
      title={Representations for real numbers and their ergodic properties},
      journal={Acta Math. Acad. Sci. Hungar},
      volume={8},
      date={1957},
      pages={477\ndash 493},
    }
    \bib{Rue77}{article}{
      author={Ruelle, D.},
      title={Applications conservant une mesure absolument continue
      par rapport \`a $dx$ sur $[0,1]$},
      language={French, with English summary},
      journal={Comm. Math. Phys.},
      volume={55},
      date={1977},
      number={1},
      pages={47\ndash 51},
    }
 \bib{Ryc83}{article}{ author={Rychlik, M.},
 title={Bounded variation and invariant measures},
 journal={Studia Math.}, volume={76}, date={1983}, number={1},
 pages={69\ndash 80}, issn={0039-3223}, }

 \bib{Sar99}{article}{ author={Sarig, O.},
 title={Thermodynamic formalism for countable Markov shifts},
 journal={Ergodic Theory Dynam.  Systems}, volume={19}, date={1999},
 number={6}, pages={1565\ndash 1593}, }

 \bib{Sar02}{article}{author={Sarig, O.},
 title={Subexponential decay of correlations},
 journal={Invent.  Math.}, year={2002}, volume = {150}, number={3},
 pages={629 \ndash 653}, }

 \bib{Sar03}{article}{
    author={Sarig, O.},
    title={Existence of Gibbs measures for countable Markov shifts},
    journal={Proc. Amer. Math. Soc.},
    volume={131},
    date={2003},
    number={6},
    pages={1751--1758 (electronic)},
 }

\bib{Schweiger}{article}{
author={Schweiger, F.},
title={Number theoretical endomorphisms with $\sigma$-finite invariant measures},
journal={Israel Journal of Math},
volume={21},
year={1975},
pages={308-318},
}
\bib{Thaler}{article}{
author={Thaler, M.},
title={Estimates of the invariant densities of endomorphisms with indifferent
fixed points},
journal={Israel Journal of Math},
volume={37},
year={1980},
pages={303-314},
}
    \bib{UlaNeu47}{article}{
      author={Ulam, S.},
      author={von Neumann, J.},
      title={On combination of stochastic and deterministic processes},
      date={1947},
      journal={Bull. AMS},
      volume={53},
      pages={1120},
    }
    \bib{Via}{book}{, title = {Stochastic dynamics of deterministic systems},
    author = {Viana, M.},
    series = {Lecture Notes XXI Braz.  Math.  Colloq.}, publisher = {IMPA},
    address = {Rio de Janeiro}, year = {1997},
    }

    \bib{You92}{article}{
      author={Young, L.-S.},
      title={Decay of correlations for certain quadratic maps},
      journal={Comm. Math. Phys.},
      volume={146},
      date={1992},
      number={1},
      pages={123\ndash 138},
    }
    \bib{You98}{article}{
      author={Young, L.-S.},
      title={Statistical properties of dynamical systems with some hyperbolicity},
      journal={Ann. of Math. (2)},
      volume={147},
      date={1998},
      number={3},
      pages={585\ndash 650},
      issn={0003-486X},
    }
    \bib{You99}{article}{
      author={Young, L.-S.},
      title={Recurrence times and rates of mixing},
      journal={Israel J. Math.},
      volume={110},
      date={1999},
      pages={153\ndash 188},
    }
    \bib{Yur99}{article}{ author={Yuri, M.},
    title={Thermodynamic formalism for certain nonhyperbolic maps},
    journal={Ergodic Theory Dynam.  Systems}, volume={19}, date={1999},
    number={5}, pages={1365\ndash 1378}, }

    \bib{Zwe05}{article}{   author={Zweim{\"u}ller, R.},
    title={Invariant measures for general(ized) induced transformations},
    journal={Proc. Amer. Math. Soc.},   volume={133},   date={2005},
    number={8},   pages={2283--2295 (electronic)},}

\end{biblist}
\end{bibsection}
\end{document}